\documentclass[11pt]{article}
\usepackage{a4}
\usepackage{graphicx}
\usepackage{parskip}

\usepackage{amsfonts}
\usepackage{natbib}
\usepackage{algorithm}
\usepackage{algorithmic}
\usepackage{amsmath}
\usepackage{setspace}

\usepackage{latexsym}
\usepackage{amsthm}
\usepackage{rotating}
\usepackage[pdftex]{color}
\usepackage{csvsimple}
\usepackage{verbatim} 


\newtheorem{theorem}{Theorem}[section]

\newtheorem{lemma}{Lemma}[section]

\newtheorem{condition}{Condition}[section]

\theoremstyle{remark}
\newtheorem{remark}{Remark}[section]
\newtheorem{example}{Example}[section]

\newcommand{\R}{\mathbb{R}}

\newcommand{\argmin}{\mathop{\arg \min}\limits}

\newcommand{\PP} {{  \rm I\hskip-0.22em P}}

\newcommand{\EE} {{\rm I\hskip-0.48em E}}

\newcommand{\KX}{K_{X}}
\newcommand{\KG}{K_{g}}
\newcommand{\Jh}{J_{h}}
\newcommand{\CC}{C}
\newcommand{\kappaUno}{\kappa_{0}}
\newcommand{\kappaUnoG}{\kappa_{0}^{\prime}}
\newcommand{\kappaUnoH}{\kappa_{0}^{\prime\prime}}
\newcommand{\deltaUno}{\delta_{1}}
\newcommand{\deltaUnoP}{\delta_{1}^{\prime}}
\newcommand{\kappaUnoP}{\kappa_{1}^{\prime}}
\newcommand{\kappaTre}{\kappa_{2}}
\newcommand{\deltaUnoS}{\delta_{1}^{\prime\prime}}
\newcommand{\deltaDUE}{\delta_{2}}
\newcommand{\deltaTre}{\delta_{3}}

\newcommand{\deltaTOT}{\delta_{T}}
\newcommand{\kappaTOT}{\kappa_{T}}

\newcommand{\be}{\begin{eqnarray}}
\newcommand{\ee}{\end{eqnarray}}
\newcommand{\bes}{\begin{eqnarray*}}
\newcommand{\ees}{\end{eqnarray*}}

\usepackage{natbib}


\begin{document}
\onehalfspacing

\begin{center}
\LARGE{The partial linear model in high dimensions}\\\vspace{1.5cm}
 \normalsize{\textit{High-dimensional partial linear model}}
\vspace{1.5cm}

\Large{Patric M\"uller and Sara van de Geer \\Seminar f\"ur Statistik, ETH Zurich}
\end{center}

\newpage

\begin{abstract} Partial linear models have been widely used as flexible method for modelling
linear components in conjunction with non-parametric ones. Despite the presence of the non-parametric part,
the linear, parametric part can under certain conditions be estimated with parametric rate.
In this paper, we consider a high-dimensional linear part. We show that it can be estimated
with oracle rates, using the LASSO penalty for the linear part and a smoothness penalty for
the nonparametric part.
\end{abstract}

\noindent\textit{Key words}: {doubly penalised lasso, high-dimensional partial linear model, lasso, nuisance 
function, semi-parametric model, variable selection}

\section{Introduction}
Consider the partial linear model where the expectation of the response variable $Y$ depends on two predictors
$(X,Z)$. The dependence of $Y$ on the first explanatory variable $X$ is linear, whether a non-linear, unknown,
nuisance function defines the dependence on $Z$.\\
In order to deal with this model we need some special techniques which take into account its particular properties.
Simply neglecting the non-linear part and using the standard least square estimator does not lead to
the correct answers, on the other hand directly applying some non-linear regression (i.e.\ treating the linear
part as non-linear) is often too rough.\\
The partial linear model was studied extensively, see e.g.\ \cite {green1985semi}, \cite{wahba1990spline}
and more recently
\cite{hardle2007partially} and the references therein. 
 In \cite{Mammen97} the penalised least squares estimator is shown to
be consistent. Asymptotic normality for the linear part was also shown.
These results are valid in the low dimensional case, where the number of observations $n$ is much larger than the number of variables $p$ in the linear part i.e. $n\gg p$.\\
The high-dimensional case, where $n\ll p$, is more
challenging. The model is now underspecified and the methods proposed in \cite{Mammen97} can not be
directly applied. We need some more techniques in order to overcome these new difficulties.\\
There is a large body of work on linear high dimensional models. A common approach is to construct penalised
estimators like the LASSO (least absolute shrinkage and selection operator proposed in \cite{Tibshirani96}) or
the ridge regression and elastic net (see \cite{zou05}).
The LASSO is widely studied (see e.g.\ \cite{koltchinskii2011oracle} and  \cite{BvdG2011} and references therein) and gives remarkable results
in sparse contexts.\\
In this paper we study the high-dimensional partial linear model.
Our main contribution is to combine the methods used in \cite{Mammen97} for the low dimensional case with the
standard LASSO and obtain an estimator that can deal with and overcome both extra difficulties given from the
high-dimensional and the non-linear part of the model.\\
We prove theoretical results giving bounds for both the prediction and the estimation error.
In the last part of the paper we present simulation results. We compare the performance of our method with the
standard LASSO with \break (un-)known nuisance function.\\
\medskip \\
The paper is organised as follows.
We begin in Section \ref{Notation.section} with notation and model description. The required conditions are 
here listed and discussed. Section \ref{Results.section} 
describes our main results.  In Section \ref{NumericalRes.section} we present simulations. These confirm the 
previous theoretical results. Section \ref{Proffs.section} contains technical tools from empirical process 
theory and concentration inequalities. These tools are required for the proof of the results from Section 
\ref{Results.section}, which are also part of Section \ref{Proffs.section}. A set of tables 
with detailed results of the simulations can be found in Section \ref{NumRes.Appendix}

\section{Notation and model assumption}\label{Notation.section}
In this section we describe more in detail the model we study. After the formal description of the model, we
define some extra conditions we need in order to prove our results. For each condition we further add a short
comment on their strength. The notation we use is explained in the following subsection.

\subsection{Notation}
Through all the paper we always use $i\in\{1,\ldots,n\}$ and $j\in\{1,\ldots,p\}$.

For a vector $\beta\in\R^p$ the $\ell_1$-norm is $\|\beta\|_1:=\sum_{j=1}^p |\beta_j|$.
For a fixed $\beta^0 \in \R^p$, we denote with $S_0$ the set of all non-zero components of $\beta^0\in\R^p$ and 
with $s_0$ its cardinality. Define for all $\beta \in \R^p$, the vector $\beta_{S_0}\in\R^p$ where 
$(\beta_{S_0})_i=|\textrm{sign}(\beta^0_i)|\beta_i$, i.e. the 
components in $S_0^c$ are set to $0$. Define $\beta_{S_0^c} := \beta - \beta_{S_0} $.

Let $x\in\R^p$ and $z\in\R^d$ be high-dimensional and low-dimensional random variables, respectively.
Denote by $Q$ the distribution of $(x,z)$ and by $Q_x(Q_z)$ the marginal distribution of $x(z)$.
We let $\{ (x_i, z_i)\}_{i=1}^n $ be $n$ i.i.d.\ copies of $(x,z)$.
In matrix notation we denote with $X\in
M_{n\times p}(\R)$ the matrix with rows $x_i\in\R^p$ ($i=1 , \ldots , n$) and columns $X_j\in\R^n$
($j=1 , \ldots , p $).  With $X_{ij}$ we denote the $(i,j)^{th}$ component of $X$.
Similarly, we let $Z$ be the $n \times d $-matrix with rows $z_i $ ($i=1 , \ldots , n $).

For $f: \R^{p+d} \rightarrow \R$ a measurable function, we define, with some abuse of notation
$f(X,Z):=(f_1,\ldots,f_n)^T:=\left(f(x_1, z_1 ),\ldots,f(x_n, z_n ) \right)^T$. The squared $L_2$-norm is
$\|f\|^2:=\EE f^2(x,z)$, where $\EE$ is the expectation with respect to the distribution $Q$. The sup-norm 
is $\|f\|_\infty:=\sup_{\rm x , z} |f({\rm x, z } )|$ and the squared empirical norm is defined as 
$$
\|f\|_n^2:=\frac{1}{n} \sum_{i=1}^n f^2(x_i, z_i )~.
$$
More generally, for a vector $v \in \R^n$, we write $\| v \|_n^2 := v^T v / n $. 

\subsection{Model description and motivation}
Let ${\cal G}$ be a linear subspace of $L_2(Q_z)$, $g^0\in\mathcal{G}$ and $\beta^0\in\R^p$ be fixed.\\
The response variable $y\in\R$ depends linearly on $x$ and in a non-parametric way on $z$. 
The observed variables are i.i.d.\ copies of $(x,z,y)$, whereas $(\beta^0,g^0)$ are unknown parameters. The resulting model is then
$$ y=x\beta^0+g^0(z)+e~, $$
where $e$ is the error term which can be interpreted as noise. 

The observations are denoted by $\{ (x_i , z_i , y_i) \}_{i=1}^n $ ($n$ i.i.d.\ copies  of $(x,z,y)$). In matrix 
notation we have
\be \label{Model.equation}
Y=X\beta^0+g^0(Z)+E~\in\R^n~,
\ee
where $g^0(Z):=(g^0(z_1),\ldots,g^0(z_n))^T\in\R^n$, $E:=
(e_1 , \ldots , e_n)^T \in \R^n$ and 
 $Y:=(y_1,\ldots,y_n)^T\in\R^n$.
\begin{remark}
In this paper we take $p\gg n$. Consequently we have a high-dimensional linear model with nuisance
function.
\end{remark}
Semi-parametric models are quite common in low dimensional contexts. In high-dimensional settings linear models 
(possibly after suitable coordinate transformation) are widely analysed.
In some cases however there is strong evidence to believe that one or some variables have a non-linear influence
on the response. Consider the following (fictive) example:
\begin{example}
Take $Y$ is the yield of some genetically modified plants; $X$ represents the gene expression data of
plants; $Z$ are the factors 'water' and 'temperature'. It is
reasonable to assume that $Y$ linearly depends on $X$, but not on $Z$ (too
much or few water and too high or low temperature reduce the yield). Model \eqref{Model.equation} provides a 
reasonable approach to describe this problem.
\end{example} 

\subsection{Estimator and model assumptions}

Let $J(\cdot,\cdot)$ be a (semi-)scalar product on $\mathcal{G}$ and $J(\cdot )$ be the corresponding
\break (semi-)norm.
We define the doubly penalised least square estimator $(\hat\beta,\hat g)$ as:
\be \label{Estimator.Definition.Equation}
(\hat\beta,\hat g):=\argmin_{(\beta,g)\in\R^p\times\mathcal{G}}
\left\{||Y-X\beta-g||_n^2+\lambda||\beta||_1+\mu^2J^2(g)\right\}
\ee
We thus propose a doubly penalised least square estimator. The idea behind it is that the $\ell_1$-penalisation on the 
linear coefficients controls the sparsity of $\hat \beta$, whether the second penalty term keeps the 
estimation of $g$ under control.
In our hope our estimator has the desired properties given from each of the two different penalisations
and this, possibly, without paying a too high price in terms of prediction and estimation error.

\begin{remark}
If $g^0$ is known, the restricted version of  \eqref{Estimator.Definition.Equation} with
$g=g^0$ gives the standard LASSO for 
the linear model (see \eqref{Def_Standard_Lasso.equation}). 
\end{remark}

\begin{remark} \label{def_J.remark}
When $z \in \R$ (i.e., $d=1$) a common choice for $J^2(g)$ is the $L_2$-norm of the second derivative, $J^2(g)=\int_\R (g''(z))^2 d\nu(z)$,
where $\nu$ is some probability measure (see e.g. \cite{wahba1990spline}, 
\cite{green1994nonparametric}). 
\end{remark}

In the following lines we give a set of conditions we assume in this paper. Their strength is also shortly
discussed.

\vskip .1in
\begin{condition}[Gaussian condition]\label{Gaussian.condition} ~\\
The errors $e_1 , \ldots , e_n$ are i.i.d. standard Gaussian random variables independent of $\{ x_i, z_i
\}_{i=1}^n $.
\end{condition}
Gaussian errors is a quite strong, but not unusual assumption. In any case this condition can be easily relaxed
to sub-Gaussian errors. We have for simplicity assumed unit variance for the errors. In practice, the variance
of the errors will generally not be know. The tuning parameters then will be scaled by
an estimated error standard deviation/variance. 

\vskip .1in
\begin{condition}[Design condition]\label{design.condition} ~\\
For some constant $\KX$ it holds that
$$ \max_{i,j} |X_{ij} | \le \KX . $$
\end{condition}
A bound on the $X$-values is a quite restrictive assumption. 
However we can often approximate a
non-bounded distribution with its truncated version.

\vskip .1in
Define
$$
h(Z) := \EE [X|Z] , \ \tilde X:= X-\EE[X|Z]~.
$$
\begin{condition}[Eigenvalue condition]\label{eigenvalue.condition} ~\\
The smallest eigenvalue $\Lambda_{\tilde X, {\rm min}}^2 $ of $\EE \tilde X^T \tilde X / n$ is positive.\\
Furthermore the largest eigenvalue  of $\EE h^T h / n $, denoted by $\Lambda_{h, {\rm max}}^2$, is assumed
to be finite.
\end{condition}
This condition ensures that there is enough information in the data to identify the parameters in the linear part.

\vskip .1in
Let for each $u>0$ $N(u,\mathcal{G},\|\cdot\|_\infty)$ be the smallest value of $N$ such that there exists
$G$, a subset of $\mathcal{G}$ of cardinality $N$, for which 
$$
\sup_{g\in\mathcal{G}} \min_{\tilde g\in G} \|g-\tilde g\|_\infty \leq u~.
$$
Then $H(\cdot ,\mathcal{G},\|\cdot\|_\infty):=\log N(\cdot ,\mathcal{G},\|\cdot\|_\infty)$ is called the entropy of
$\mathcal{G}$ for the supremum norm.
\begin{condition}[Entropy condition] \label{entropy.condition} ~\\
For some constants $A$ and $m > 1/2$ one has
$${\cal H} \bigl(u, \{ g:\  \| g \| \le 1 , \ J(g) \le 1 \} , \| \cdot \|_{\infty} \bigr)\le A u^{-1/m} , \ u
> 0 . $$
\end{condition}
This condition summarizes the assumed ``smoothness" of the nonparametric part. 
For example, when $Q_z$ has support in  $[0,1]$, the choice of Remark \ref{def_J.remark} with $\nu$ Lebesgue measure on $[0,1]$ has $m=2$.

\vskip .1in
\begin{condition}[Penalty condition]\label{penalty.condition} ~\\
For some constant $\KG$, it holds that
$$\sup_{\| g \| \le 1 , \ J(g) \le 1 } \| g \|_{\infty} \le \KG. $$
\end{condition}
This condition states that, if the $L_2$ norm of $g$ and $J(g)$ are bounded, then the supremum norm of $g$
is bounded as well. This avoids functions with high and very steep peaks.

\vskip .1in
\begin{condition}[$J$ and $h$ condition]\label{Jh.condition}~\\
For some constant $\Jh$, it holds that
$$J(h) \le \Jh. $$
\end{condition}
This condition is fulfilled for $\Jh=0$ if $x$ and $z$ are independent and $J$ is chosen as in Remark 
\ref{def_J.remark}


\section{Results}\label{Results.section}
In this section we present our main results. We provide theoretical guarantees in term of prediction and 
estimation for estimator \eqref{Estimator.Definition.Equation}. Our first theorem proves the convergence of our 
method, whether the second leads to an oracle result. In particular Theorem \ref{2.theorem} shows that, up to a 
constants, our method performs as good as if the nuisance function $g^0$ were known.
\bigskip \\
We use the short-hand notation 
$$
{\cal F}:=\{f=X\beta+g: \ g\in {\cal G}, \ \beta\in \R ^p \},
$$
and
$$
f^0:=X\beta^0+g^0 .
$$

\begin{remark} \label{OrthogonalPorj.remark} (Orthogonal decomposition)\\
For every $f,f^0\in \mathcal{F}$, $f-f^0=\tilde X(\beta-\beta^0)+\big[h(\beta-\beta^0)+g-g^0\big]$ is an
orthogonal decomposition i.e.:
\be
\|f-f^0\|^2&=&||\tilde X (\beta-\beta^0)\|^2+\|h(\beta-\beta^0)+g-g^0\|^2
\label{f_Orthogonal_decomposition.equation}
\ee
\end{remark}

Define now
$$
\tau_{\mu,R}(f):=\tau(f) := \lambda \| \beta \|_1 /( R \sqrt {\delta_0 / 2} )  + \sqrt { \| X \beta   + g \|^2 +
\mu^2 J^2 (g)},
$$
where $\delta_0$ is a fixed small constant. Values and optimization of the constants are in this paper of minor
interest. However we give hereafter indications and or values for the mentioned constants.
Note that $\tau$ is a (semi-)norm. This is however not used for our results, we will only use
in Lemma \ref{Teo_T.lemma}
the cone property $\tau (s f)= s \tau (f)$ for all $f $ and all $s > 0$.

\begin{theorem} \label{1.theorem} Assume Conditions \ref{Gaussian.condition}, \ref{design.condition}, 
\ref{eigenvalue.condition}, \ref{entropy.condition} and \ref{penalty.condition}.
Let $\deltaTOT$ be a (small) constant, and $\kappaTOT$ be a (large) constant,
suitably chosen, depending on $\Lambda_{\tilde X, {\rm min}}$, $\Lambda_{h, {\rm max}}$,
$\KX$, $\Jh$, $A$ and $m$ only. Take 
\be \label{condition_mu_Thm.equation}
 \kappaTOT n^{-\frac{2m}{2m+1} } \le  \mu^2 \le \frac{ \delta_0 R^2}{2( 1+ J(g^0)+\Jh)^2 }.
\ee
Assume $R^2 \le \lambda \le 1$, 
\be \label{Assumption_R2Lambda2}
{4 \lambda^2 s_0 \over \Lambda_{\tilde X, {\rm min}}^2 } \le \delta_0 R^2~,
\ee
\be \label{Assumption_lambda0.equation}
\lambda \ge \frac{ \sqrt { 2 \log (2p) / n}  }{ \deltaTOT }
\ee
 and
\be \label{Assumption_EV_Lambda.equation}
\Lambda := 1+ {\Lambda_{h, {\rm max}} \over  \Lambda_{\tilde X, {\rm min}} }\leq  \mu^{-1} ~.
\ee
Then with probability at least $1- 3 \exp[-n \deltaTOT^2 \mu^2 ] $, it holds that $\tau( \hat f - f^0 ) \le R $.
\end{theorem}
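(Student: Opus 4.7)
The plan is to run a convexity/peeling reduction combined with the standard doubly penalized basic inequality, supplemented by empirical process control over the smoothness class. First I would exploit the cone property of $\tau$ (foreshadowed in Lemma \ref{Teo_T.lemma}): suppose for contradiction that $\tau(\hat f - f^0) > R$ on a good event. Because the objective in \eqref{Estimator.Definition.Equation} is convex in $(\beta, g)$, a convex combination $\bar f = t \hat f + (1-t) f^0$ with $t \in (0,1]$ chosen so that $\tau(\bar f - f^0) = R$ still satisfies the optimality basic inequality. It therefore suffices to show that every such $\bar f$ on the $\tau$-sphere of radius $R$ actually satisfies $\tau(\bar f - f^0) < R$ on a high-probability event.

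Starting from the basic inequality $\|\bar f - f^0\|_n^2 + \lambda \|\bar\beta\|_1 + \mu^2 J^2(\bar g) \le 2 E^T(\bar f - f^0)/n + \lambda \|\beta^0\|_1 + \mu^2 J^2(g^0)$, the stochastic term splits into a linear and a nonparametric piece. For the linear piece, Conditions \ref{Gaussian.condition} and \ref{design.condition} together with a Gaussian maximum bound give $\max_j |E^T X_j|/n \le \KX \sqrt{2 \log(2p)/n}$ with high probability, which by \eqref{Assumption_lambda0.equation} is at most $\deltaTOT \KX \lambda$ and hence contributes at most $\deltaTOT \KX \lambda \|\bar\beta - \beta^0\|_1$. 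The nonparametric piece is the main obstacle: I would apply a chaining/empirical process inequality to the class $\{g \in \mathcal{G} : \|g\| \le \rho,\ J(g) \le R''\}$. The entropy rate $u^{-1/m}$ from Condition \ref{entropy.condition}, together with the supremum-norm control of Condition \ref{penalty.condition}, yields a deviation of order $n^{-m/(2m+1)}$, matching the lower bound on $\mu^2$ in \eqref{condition_mu_Thm.equation}. Via the orthogonal decomposition in Remark \ref{OrthogonalPorj.remark} and Condition \ref{Jh.condition}, the projected nonparametric residual $(\bar g - g^0) + h(\bar\beta - \beta^0)$ has a $J$-seminorm controlled by $J(\bar g) + J(g^0) + \Jh \|\bar\beta - \beta^0\|_1$, which is finite because $\bar f$ lies on the $\tau$-sphere.

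To close the argument I would invoke Condition \ref{eigenvalue.condition} to exploit the identity $\|\bar f - f^0\|^2 = \|\tilde X(\bar\beta - \beta^0)\|^2 + \|h(\bar\beta - \beta^0) + \bar g - g^0\|^2$ from \eqref{f_Orthogonal_decomposition.equation}, and transfer between $\|\cdot\|_n$ and $\|\cdot\|$ via a uniform concentration inequality on the compact class (this is where \eqref{Assumption_EV_Lambda.equation} enters, keeping $\mu$ small relative to $\Lambda$). Substituting the resulting bounds back and absorbing the $\ell_1$ overflow through the compatibility-style step \eqref{Assumption_R2Lambda2}, $4 \lambda^2 s_0 / \Lambda_{\tilde X, {\rm min}}^2 \le \delta_0 R^2$, the right-hand side strictly improves on the $\tau$-sphere value $R$ and delivers the contradiction. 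The hardest part is balancing the three scales --- LASSO error, smoothness penalty, and noise fluctuation --- simultaneously, which is the reason for the coupled tuning of $\lambda$, $\mu$, $R$, $\deltaTOT$, and $\kappaTOT$ in the statement.
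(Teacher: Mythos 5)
Your proposal is correct and follows essentially the same route as the paper: the convex interpolation onto the $\tau$-ball of radius $R$ using the cone property (the paper's Lemma \ref{Teo_T.lemma}), the basic inequality, the decomposition of the stochastic part into a good event for $\sup_{f\in{\cal F}(R)}\bigl|\|f\|_n^2-\|f\|^2\bigr|$ and one for $\sup_{f\in{\cal F}(R)}|E^Tf/n|$ controlled via symmetrization, contraction, Dudley's entropy bound and Massart/Gaussian concentration, followed by the orthogonal decomposition of Remark \ref{OrthogonalPorj.remark} and the compatibility step \eqref{Assumption_R2Lambda2}. The only cosmetic differences are that you phrase the interpolation as a contradiction on the $\tau$-sphere rather than showing $\tau(\tilde f-f^0)\le R/2$ directly, and you invoke Condition \ref{Jh.condition}, which the paper does not actually need for this theorem.
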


\textbf{Asymptotics} To obtain a clearer picture of the result, let us rephrase it in an asymptotic framework,
where $1/ \Lambda_{\tilde X, {\rm min}}$, $\Lambda_{h, {\rm max}}$,
$\KX$, $\Jh$, $J(g^0)$ and $A$ are all bounded by a fixed constant and  $m$ is fixed. Then one sees that
for $\lambda$ and $\mu$ having the usual order $\lambda \asymp \sqrt {\log p / n }$, respectively 
$\mu \asymp n^{-{m \over 2m+1}}$, and when
$s_0 \log p /n = o (n^{-{2m \over 2m+1}})$ (that is, the oracle rate for the linear part established in 
Theorem \ref{2.theorem} is faster than the rate of convergence for estimating the nuisance part)
then the overall rate of convergence is $\tau(\hat f - f^0) ={\mathcal O}_{\PP}( n^{-{m \over 2m+1}} )$. 
This in particular implies $\| \hat g - g^0 \|= {\mathcal O}_{\PP}( n^{-{m \over 2m+1}} )$ and
$J (\hat g) = {\mathcal O}_{\PP} (1)$. 

\textbf{Discussion on the constants}
The results presented in Theorem \ref{1.theorem} are valid for $\delta_0=1 /253$.
$\mu$ and $R$ are intended to be small constants. The constants $\kappaTOT$ and $\deltaTOT$ are big and small 
respectively. They can be for example defined along the lines of \eqref{kappaTOT.equation} and 
\eqref{deltaTOT.equation}. We remind that the optimisation of the constants in this paper is of minor interest.

\begin{theorem} \label{2.theorem}
Assume the same conditions of Theorem \ref{1.theorem} with the constants
$\delta_0$ and $\deltaTOT$ small enough. Then with probability
at least $1-(2/p)+6 \exp[-n \deltaTOT^2 \mu^2]$
$$
\|\tilde X (\hat\beta-\beta^0)\|_n^2 +\lambda \|\hat\beta-\beta^0\|_1/4 \leq
\frac{s_0\lambda^2}{\Lambda_{\tilde X,{\rm min}}^2 }
$$
\end{theorem}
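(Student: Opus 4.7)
The strategy is to combine the basic inequality coming from the optimality of $(\hat\beta,\hat g)$ in \eqref{Estimator.Definition.Equation} with the rough bound $\tau(\hat f - f^0)\le R$ already supplied by Theorem \ref{1.theorem}. The key idea is to switch from $X$ to $\tilde X=X-h(Z)$ via Remark \ref{OrthogonalPorj.remark}, so that the noise inner product $E^T(X(\hat\beta-\beta^0)+\hat g-g^0)/n$ splits into a linear piece in $\tilde X$ and a smooth piece in $\mathcal G$; the first is handled by the standard Gaussian maximal inequality, the second by empirical process theory under Condition \ref{entropy.condition}. Once the nonparametric contributions are absorbed, the remainder has the classical form of a LASSO basic inequality, and Condition \ref{eigenvalue.condition} closes the argument.

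\textbf{Step 1 (Basic inequality).} Plugging $Y=X\beta^0+g^0(Z)+E$ into the definition of $(\hat\beta,\hat g)$, and comparing to the feasible choice $(\beta^0,g^0)$, I would derive
\[
\|X(\hat\beta-\beta^0)+\hat g-g^0\|_n^2+\lambda\|\hat\beta\|_1+\mu^2 J^2(\hat g)
\le \tfrac{2}{n}E^T\!\left[X(\hat\beta-\beta^0)+\hat g-g^0\right]+\lambda\|\beta^0\|_1+\mu^2 J^2(g^0).
\]

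\textbf{Step 2 (Decomposition and noise control).} Write $X(\hat\beta-\beta^0)+\hat g-g^0 = \tilde X(\hat\beta-\beta^0)+\left[h(\hat\beta-\beta^0)+\hat g-g^0\right]$ and split the noise term accordingly. On the event $\{\|\tilde X^T E/n\|_\infty\le \lambda/4\}$ (which has probability at least $1-2/p$ by Gaussianity of $E$ and Condition \ref{design.condition}, given \eqref{Assumption_lambda0.equation} with $\deltaTOT$ small), the linear contribution is bounded by $(\lambda/2)\|\hat\beta-\beta^0\|_1$. The remaining smooth piece $(2/n)E^T[h(\hat\beta-\beta^0)+\hat g-g^0]$ is controlled by an empirical process inequality over the set of functions $\{h\beta+g:\beta\in\R^p,\ g\in\mathcal G\}$ normed by $\sqrt{\|\cdot\|^2+\mu^2 J^2(\cdot)}$, using the entropy bound of Condition \ref{entropy.condition} together with Conditions \ref{penalty.condition} and \ref{Jh.condition}. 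This is where Theorem \ref{1.theorem} is used: it guarantees $\|h(\hat\beta-\beta^0)+\hat g-g^0\|^2+\mu^2 J^2(\hat g-g^0)\le R^2$, so the peeling/concentration bound applies and the resulting error term is of order $\mu^2$, hence absorbable against $\mu^2 J^2(\cdot)$ and against a small multiple of $\|\tilde X(\hat\beta-\beta^0)\|_n^2$.

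\textbf{Step 3 (LASSO finish).} After absorption the inequality collapses to
\[
\|\tilde X(\hat\beta-\beta^0)\|_n^2 + (3\lambda/4)\|\hat\beta_{S_0^c}\|_1 \le (5\lambda/4)\|(\hat\beta-\beta^0)_{S_0}\|_1,
\]
which implies the cone condition $\|(\hat\beta-\beta^0)_{S_0^c}\|_1 \le 5\|(\hat\beta-\beta^0)_{S_0}\|_1$. Condition \ref{eigenvalue.condition} applied to $\hat\beta-\beta^0$ (after passing from $\|\tilde X(\hat\beta-\beta^0)\|_n$ to its expected version by another concentration step) yields
\[
\|(\hat\beta-\beta^0)_{S_0}\|_1 \le \frac{\sqrt{s_0}}{\Lambda_{\tilde X,\min}}\|\tilde X(\hat\beta-\beta^0)\|_n,
\]
and a standard $2ab\le a^2+b^2$ step with the right constants delivers the claimed oracle bound $\|\tilde X(\hat\beta-\beta^0)\|_n^2+\lambda\|\hat\beta-\beta^0\|_1/4 \le s_0\lambda^2/\Lambda_{\tilde X,\min}^2$.

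\textbf{Main obstacle.} The genuinely delicate point is Step 2: the inner product $(2/n)E^T[h(\hat\beta-\beta^0)+\hat g-g^0]$ involves a random element of a function class whose size is only loosely controlled through $R$, and the error must come out of order $\mu^2$ (not $R^2$) so it can be hidden in the nonparametric penalty rather than dominate the linear rate. Doing this cleanly is exactly what makes the auxiliary Theorem \ref{1.theorem}, the entropy assumption with $m>1/2$, and the orthogonal decomposition via $\tilde X$ indispensable; once all three are in hand, Step 3 is the familiar LASSO argument.
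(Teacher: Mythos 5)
There is a genuine gap in Step 2, and it is precisely the obstacle that forces the paper to take a different route. Your basic inequality leaves two additive terms on the right-hand side that are \emph{not} attached to a factor $\|\hat\beta-\beta^0\|_1$: the penalty offset $\mu^2\bigl(J^2(g^0)-J^2(\hat g)\bigr)$ and, more seriously, the noise--nuisance inner product $\tfrac{2}{n}E^T(\hat g-g^0)$. Even after the peeling/concentration argument you invoke (which is exactly Lemma \ref{empiricalG.lemma}), the best available bound for the latter over the class $\{g:\|g\|\lesssim R,\ J(g)\lesssim R/\mu\}$ is of order $R^{1-1/(2m)}(R/\mu)^{1/(2m)}/\sqrt n\asymp \mu^2\asymp n^{-2m/(2m+1)}$. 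Absorbing part of it into $\mu^2J^2(\hat g-g^0)$ and $\|h(\hat\beta-\beta^0)+\hat g-g^0\|_n^2$ on the left still leaves an additive remainder of order $\mu^2$. But in the regime where Theorem \ref{2.theorem} has content one has $s_0\lambda^2/\Lambda_{\tilde X,{\rm min}}^2\asymp s_0\log p/n = o(n^{-2m/(2m+1)})$, so this remainder strictly dominates the claimed bound: your argument can only deliver $\|\tilde X(\hat\beta-\beta^0)\|_n^2\lesssim s_0\lambda^2+\mu^2$, which is a Theorem-\ref{1.theorem}-type statement, not the oracle inequality. You also do not address the empirical (as opposed to population) cross term $\tfrac{2}{n}\langle\tilde X(\hat\beta-\beta^0),\,h(\hat\beta-\beta^0)+\hat g-g^0\rangle$, which does not vanish under $\|\cdot\|_n$; but that is a secondary issue compared to the additive $\mu^2$ term.

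The missing idea is the paper's use of the KKT stationarity condition with the \emph{joint} perturbation $(\hat\beta+s{\rm e}_j,\ \hat g-s h^j)$, i.e.\ profiling the nuisance out along the direction $-h^j$. Differentiating at $s=0$ produces a vector equation in which the design appears only through $\tilde X=X-h$ and the nuisance appears only through $J(\hat g,h_j)$ and $(\hat g-g^0)^T\tilde X_j/n$. Multiplying this equation by $\hat\beta-\beta^0$ then forces \emph{every} nuisance contribution to carry the factor $\|\hat\beta-\beta^0\|_1$, with coefficients of order $R\mu\le R^2\le\lambda$ times small constants (Lemma \ref{gX_unif_bound.lemma} and the $J(\hat g,h)$ bound via Condition \ref{Jh.condition}); these are absorbed into $\tfrac{\lambda}{2}\|\hat\beta-\beta^0\|_1$, and no free additive $\mu^2$ term survives. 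Your Steps 1 and 3, and your use of Theorem \ref{1.theorem} to localize $\hat g$, are in the right spirit, but without the KKT/profiling device the decomposition you propose cannot reach the stated constant-free oracle rate.
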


\begin{remark}
This theorem contains two important results, we get prediction results
$$
\|\tilde X (\hat\beta-\beta^0)\|_n^2 \leq \frac{s_0\lambda^2}{\Lambda_{\tilde X, {\rm min}}^2}
$$
and estimation results
$$
\|\hat\beta-\beta^0\|_1 \leq \frac{4 s_0\lambda}{\Lambda_{\tilde X, {\rm min}}^2}.
$$
\end{remark}

\begin{remark}
Theorem \ref{2.theorem} says that 
 one can estimate
$\beta^0$ in $\|\cdot\|_1$-norm with the same rate as in the case where the nuisance parameter
$g^0$ is known.  In asymptotic terms, with $\lambda \asymp \sqrt {\log p / n} $ and
$\mu \asymp n^{-{m \over 2m+1}}$, this rate is ${\mathcal O}_{\PP} ( s_0 \sqrt {\log p / n} )$.
One may verify moreover that the theoretical (out of sample) prediction error, $ \| X ( \hat \beta - \beta^0) \|^2$,
the empirical prediction (in sample) error
$ \| X ( \hat \beta - \beta^0 ) \|_n^2 $ and the squared $\ell_2$-error $\| \hat \beta - \beta^0 \|_2^2$ are all
of order $s_0 \log p / n $ in probability. 
\end{remark}

\section{Numerical results}\label{NumericalRes.section}
In this section we present the results of a pseudo real data study. We compare the performance of our estimator 
\eqref{Estimator.Definition.Equation} with the LASSO estimator.
\medskip\\
The following model
$$
\tilde Y=X\beta^0+E
$$
is a simpler version of \eqref{Model.equation}, where $g^0$ is known and is the zero function. A widely used
estimator  in this  case is the Lasso (least absolute shrinkage and selection operator) $\hat \beta^{lasso}$, 
(\cite{Tibshirani96}). That is:
$$
\hat\beta^{lasso}:=\argmin_{\beta} \|Y-X\beta\|_n^2 +\lambda \|\beta\|_1
$$
Under some compatibility assumptions (see \cite{vandeG07}, \cite{koltch09b}, \cite{koltch09a} and \cite{bickel2009sal}) we have,
with high probability, the following performance for
the Lasso:
$$
\|X(\hat\beta^{lasso}-\beta^0)\|_n^2+\lambda\|\hat\beta^{lasso}-\beta^0\|_1\leq \frac{4 \lambda^2
s_0}{\phi_0^2}~,
$$
where $\phi_0$ is the so called compatibility constant (see Theorem 6.1 and Corollary 6.2 in \cite{BvdG2011}).
This result is very similar to Theorem \ref{2.theorem} which imply
that our method should work, asymptotically, as good as the Lasso in the case where the function $g^0$ is 
known.\\
In this section we will therefore compare our estimator with the Lasso.

\subsection{Dataset and settings}
We construct the pseudo real dataset for Model \eqref{Model.equation} as follows:\\
We take $X$ as a $n \times p$ matrix, from real data. $X$ is obtained by randomly picking out $p$ 
components from one of the following datasets.
\begin{itemize}
 \item Leukemia, also used in \cite{dettling04} and \cite{BuMando}.
 \item Prostate, also used in \cite{dettling04} and \cite{BuMando}.
\end{itemize}
$Z$ is simulated (see Remark \ref{Zsim.rmk}).
We furthermore analyse two different cases. We distinguish the case where $X$ and $Z$ are independent or 
dependent. To create dependency in the active set of $X$ we redefine the values of the first three 
columns of the matrix $X$ as follows:
$$
X_1:=2Z+V_1\quad,\quad X_2:=2Z^2+V_2\quad X_3:=-Z+V_3 ,
$$
where the $V_j$ are $n$-dimensional vectors with i.i.d normally distributed components. The resulting empirical 
correlation between $X_1$ and $Z$ is on average 0.74.

We let the active components of $\beta^0$ assume values  $\pm 1$ with probability $0.5$ each. Without loss of 
generality we take $S_0:=\{1,\ldots,s_0\}$, i.e. the first $s_0$ components of $\beta^0$ are different from 
zero.
\\
We let $z$ be of dimension $d:=1$. Consequently $Z\in \R^n$. 
\begin{remark}\label{Zsim.rmk}
In general it is (almost) impossible to properly estimate a real function from observational data in a region 
where there are no or very few observations. In order to avoid big gaps in $Z$ (intervals with 
few or none observations), we keep $z_i$ i.i.d. copies of $z \sim Unif[-0.5,0.5]$. Observe that actually
gaps do not cause very big problems in term of prediction. Only the estimation of $g$ could be imprecise in 
intervals with very few observations.\\
In any case, an appropriate prior standard transformation can be applied in order to make our data ``look 
better''.
\end{remark}

The semi-real data are then generated as 
\be \label{LinearModelBasic.equation}
Y = X \beta^0 +g^0(Z) + \varepsilon
\ee
We compare the following three estimators:
\begin{itemize}
 \item Lasso with known function $g^0$ (LK):
\be \label{LassoNoPrior.equation}
\hat\beta^{lasso~g^0}:=\argmin_{\beta} \|Y-g^0(z)-X\beta\|_n^2 +\lambda \|\beta\|_1
\ee
One can quickly notice that this is nothing else than Lasso for the linear high-dimensional model.
 \item Standard Lasso (LN):
\be \label{Def_Standard_Lasso.equation}
\hat\beta^{lasso}:=\argmin_{\beta} \|Y-X\beta\|_n^2 +\lambda \|\beta\|_1
\ee
\item Our estimator, the Doubly Penalised Lasso (DP):
\be \label{OurEstChapterResults.equation}
(\hat\beta,\hat g):=\argmin_{(\beta,g)\in\R^p\times\mathcal{G}}
\left\{||Y-X\beta-g||_n^2+\lambda||\beta||_1+\mu^2J^2(g)\right\}
\ee
\end{itemize}
For each one of the datasets and each estimator we fit 36 designs with 1000 repetitions each.
The designs are obtained by varying the following parameters:
\begin{itemize}
 \item [-] The dimension $p$, or number of variables in the model. In each simulation run, $p$ covariables are
chosen at random among all covariables in the dataset. We take $p$ either $250$ or $1000$.
 \item [-] The sparsity $s_0$. This denotes the number of non-zero components of $\beta_0$. We let $s_0$ be $5$
or $15$.
\item [-] The function  $g^0(z)$. We use the following three nuisance functions (see Figure \ref{g0.Figure}).
$$
g^0_1(z):= 0
$$
$$
g^0_2(z):= -20z^2-10
$$
$$
g^0_3(z):=3(e^{2z}+\sin(12z))
$$
Our intention is to have a `representative' sample among the bounded functions. Therefore we
choose an ''easy'' quadratic function ($g_2$) and a more complicated one ($g_3$). 
In order to give importance to both the linear and the non-linear part of the design the
functions $g_2$ and $g_3$ have comparable range of values as $X\beta^0$.\\
The trivial case where there is no nuisance function is represented by $g^0_1$.
 \item [-] The linear signal to noise ratio (lSNR), defined as 
$$
\text{lSNR} := \sqrt{\frac{\|X\beta^0\|_n^2}{\sigma^2}}~.
$$
The lSNR can be 2, 8 or 32.
\end{itemize}

\begin{remark}
We furthermore define the total signal to noise ratio:
$$
\text{tSNR} := \sqrt{\frac{\|X\beta^0+g^0\|_n^2}{\sigma^2}}.
$$
In our computations we fix lSNR and look at the corresponding tSNR value (and not the opposite). In such a way
the error term is independent of the magnitude of $g^0$.
Furthermore adding a nuisance function $g^0$ to model \eqref{LinearModelBasic.equation} increases at the
same time the $tSNR$ and the difficulty of the estimation of the parameters. This is somehow not ''fair''. 
The classical signal to noise ratio for the standard Lasso (in our case the Lasso with known $g^0$) is 
lSNR. Fixing lSNR allows us to compare our results with other papers, like \cite{BuMando}.
\end{remark}

\begin{center}
\begin{figure}
 \centering
 \includegraphics[width=180pt]{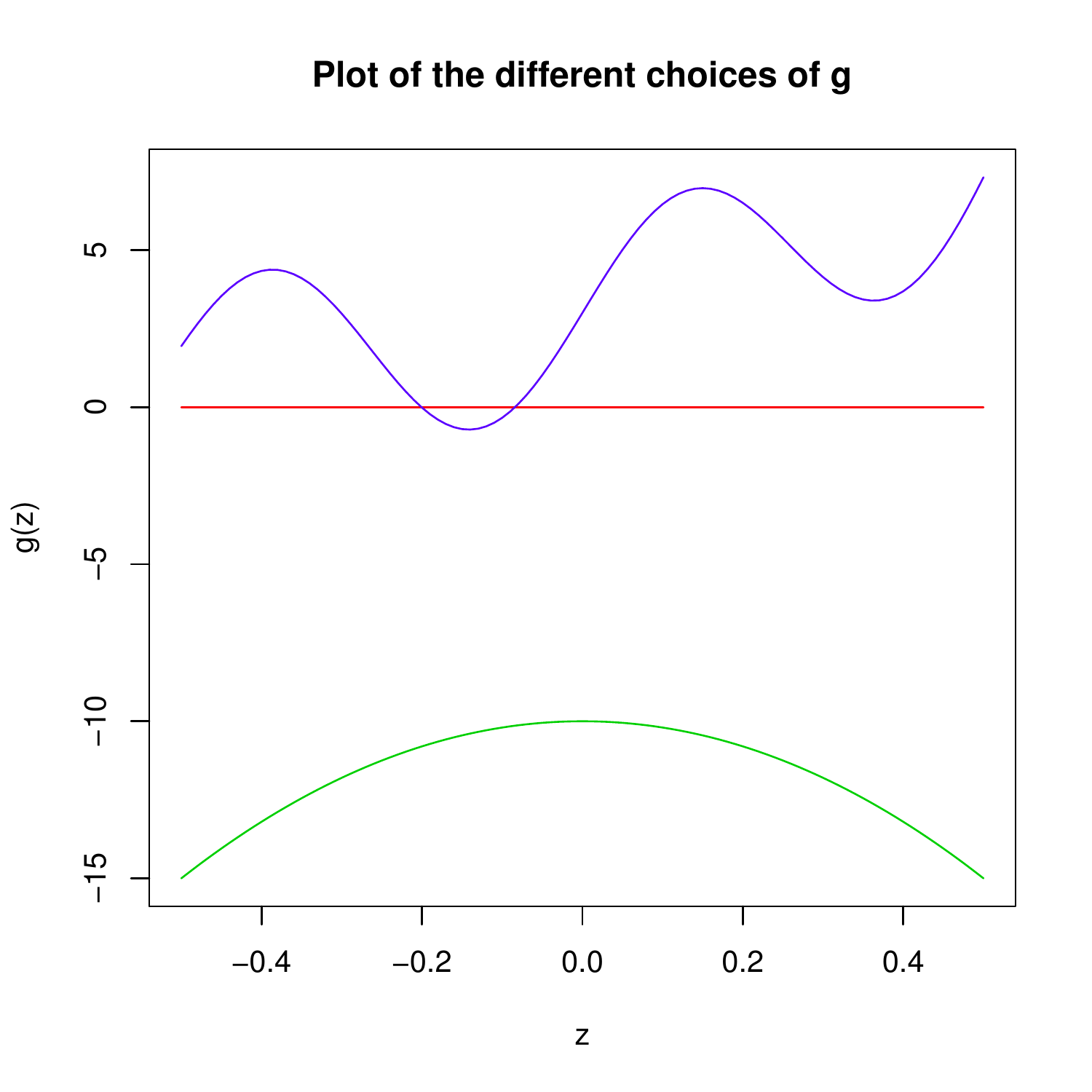}
\caption{The three functions used for testing the methods: $g_1(z)\equiv 0$, $g_2(z)=-20z^2-10$ and
$g_3(z)=3(e^{2z}+\sin(12z))$}
\label{g0.Figure}
\end{figure} 
\end{center}

Table \ref{scenario.Table} resumes the initial conditions.
\begin{table}[!htb]
\centering
\begin{tabular}[h]{|l|c|}
\hline
Setting parameter &   \\
\hline \hline
number $p$ of variables & 250 or 1000\\
\hline
sparsity $s_0$ & 5 or 20\\
\hline
linear signal to noise ratio (lSNR) & 2, 8 or 32 \\
\hline
function $g^0$ &$g^0_1$, $g^0_2$ or $g^0_3$\\
\hline
\end{tabular}
\caption{The parameter settings.}
\label{scenario.Table}
\end{table}

\textbf{Other settings:} 
A common choice for $J$ is $\int | g(z)''|^2 dz$. For computational reasons 
we take $J^2(g):=\int |g(z)''|^2 dz+c\int g^2 dz$, where $c$ is small
($\approx10^{-3}$). 
Note that $J$ is a norm. We choose $\mu:=n^{-2/5}/100$.

\subsection{Results}
Our aim is to compare our estimator with a well known estimator such the Lasso. In order to acquire a more 
global knowledge on the performance of our method different settings are chosen. In some of them the LK method 
performs very well (generally when $s_0$ is small and the lSNR is large) and in some others it is quite bad.
\\
In Tables \ref{ResultsLe1.table}-\ref{ResultsPr2.table} we summarize the results of the simulations.
The performances of LK (see \eqref{Def_Standard_Lasso.equation}), LN (see \eqref{LassoNoPrior.equation}) and 
DP \eqref{OurEstChapterResults.equation} are compared. The two different cases, where $X$ and $Z$ are 
independent and dependent are denoted with (DPi) and (DPd) respectively. Analogously we can define $LKi$ and 
$LKd$.\\
If $g^0$ is known, the dependence between $X$ and $Z$ does not play (given $X$) any role. Consequently LK scores
very similarly in the dependent and in the independent case. In order to make our table more readable 
we just take the average of the score of LKi and LKd and denote it with LK. The same consideration holds for LN.
\medskip\\
More in detail we compare the prediction error $\|\hat Y - Y_0\|_n$, the estimation error 
$\|\hat\beta-\beta^0\|_1$ and the true and false positive rate for $\hat\beta$ (TPR and FPR respectively). The 
error in the estimation of $g$, $\|\hat g-g^0\|_2$ is also given.
\medskip \\
Hereafter we summarize the findings of our simulations.
\subsubsection*{Prediction:}
As one can expect LK performs better than the other methods. This is not surprising because the nuisance model 
is more complex than the high-dimensional linear model. But this does not mean that our estimator is bad.
In fact it works only slightly worse than LK in prediction terms. Compared to LN, DPi and DPd works in any 
design (for non-zero nuisance function) remarkably better. Finally we can remark that DP provides 
\textit{slightly} better results if $X$ and $Z$ are independent, i.e. DPi has lower error than DPd. (See Figure 
\ref{Boxplot_Pred}).

\subsubsection*{Estimation:}
First of all note that DP is the only method estimating both $\beta^0$ and $g^0$.\\
As theoretically shown in Theorem \ref{2.theorem} LK and DPi and DPd provide similar estimation for 
$\beta^0$. Similarly as for the prediction LK performs slightly better than DPi and DPd. DPi and DPd are 
remarkably better than LN, when LK (and consequently DP) works well. When all four methods works bad, then LN 
is only slightly worse than our methods. Compare Figure \ref{Boxplot_Estim} and Tables 
\ref{ResultsLe1.table} and \ref{ResultsPr1.table} for quantitative results. The prediction of $\hat \beta$ is 
usually slightly worse in DPd than in DPi. This is not in contrast with Theorem \ref{2.theorem}, because in DPd 
the eigenvalue $\Lambda_{\tilde X, {\rm min}}$ is smaller than in DPi.

\begin{figure}
\centering
\includegraphics[width=\textwidth]{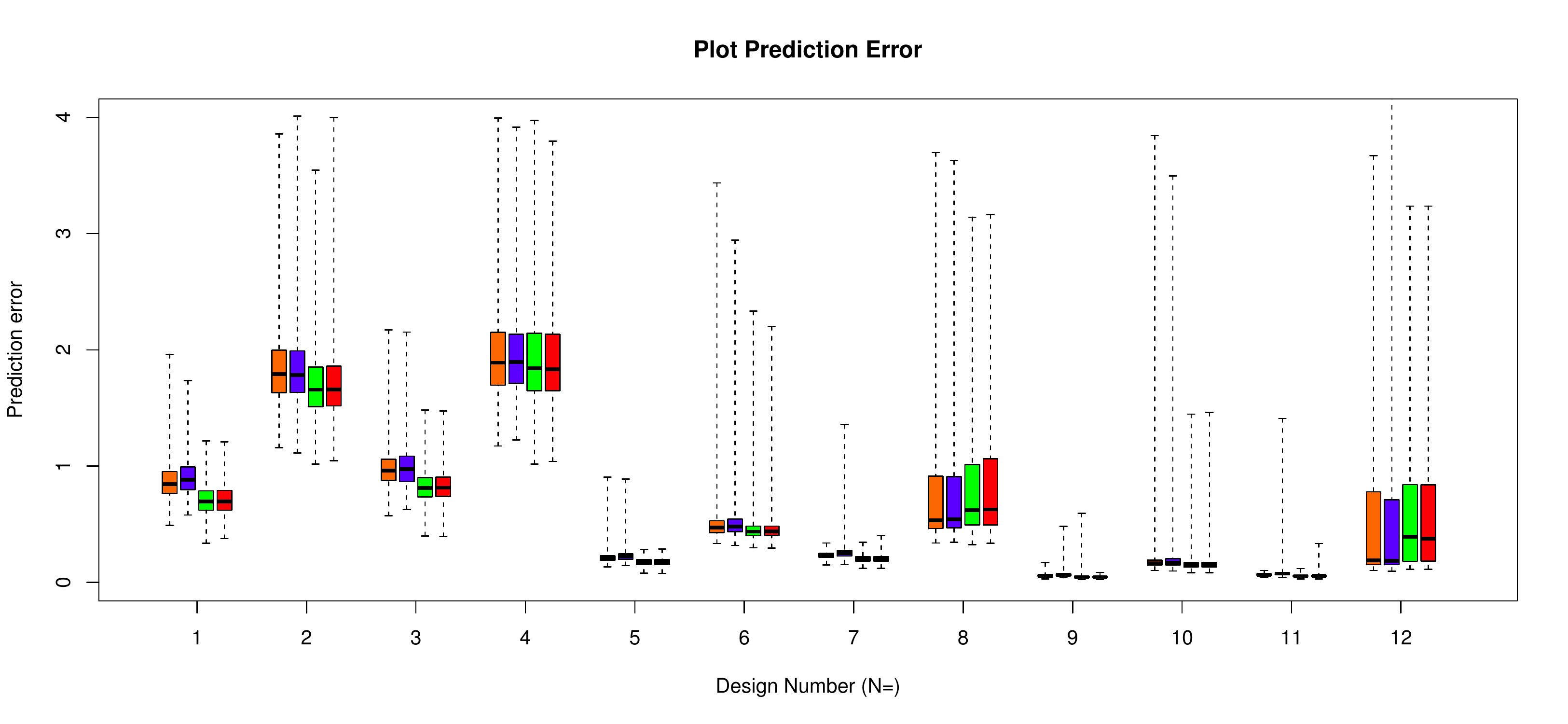}\\
\includegraphics[width=\textwidth]{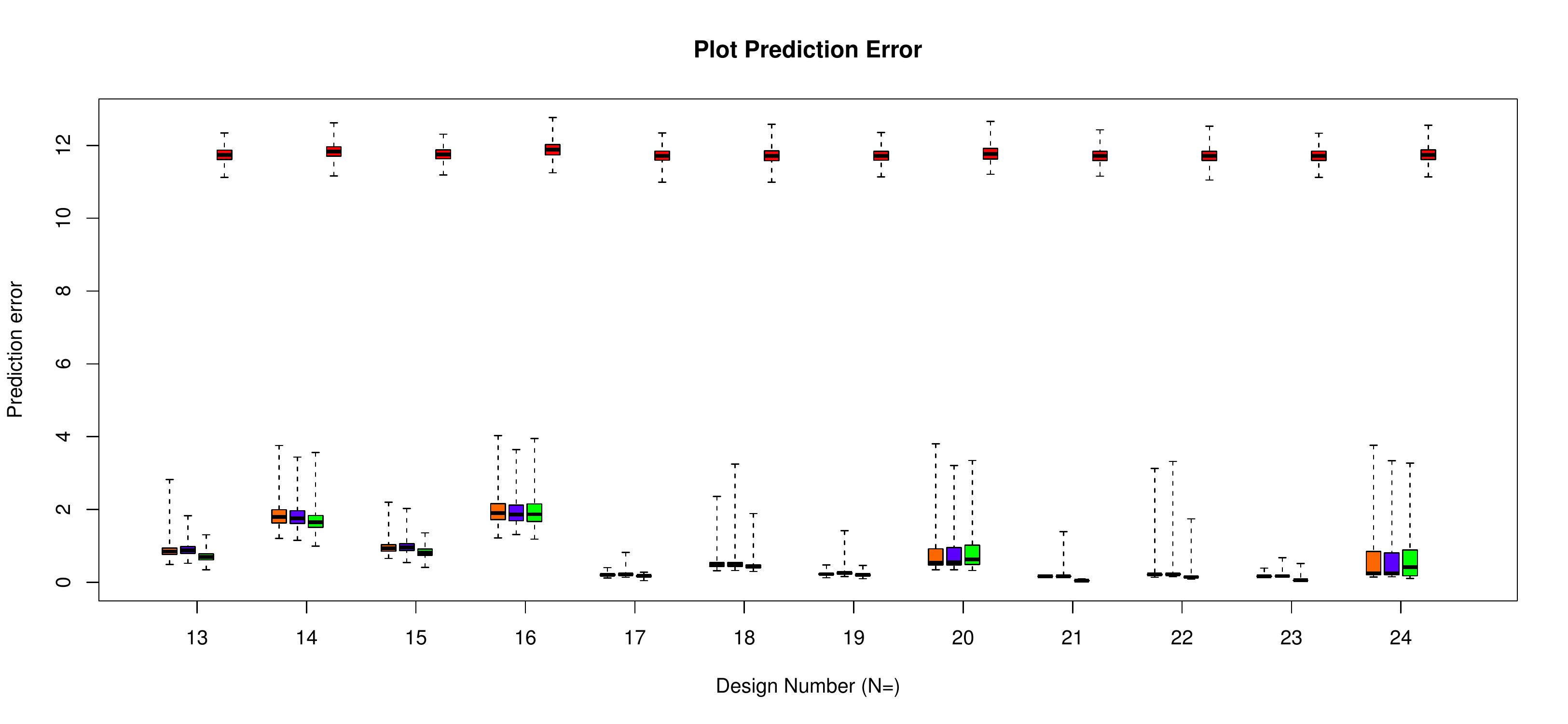}\\
\includegraphics[width=\textwidth]{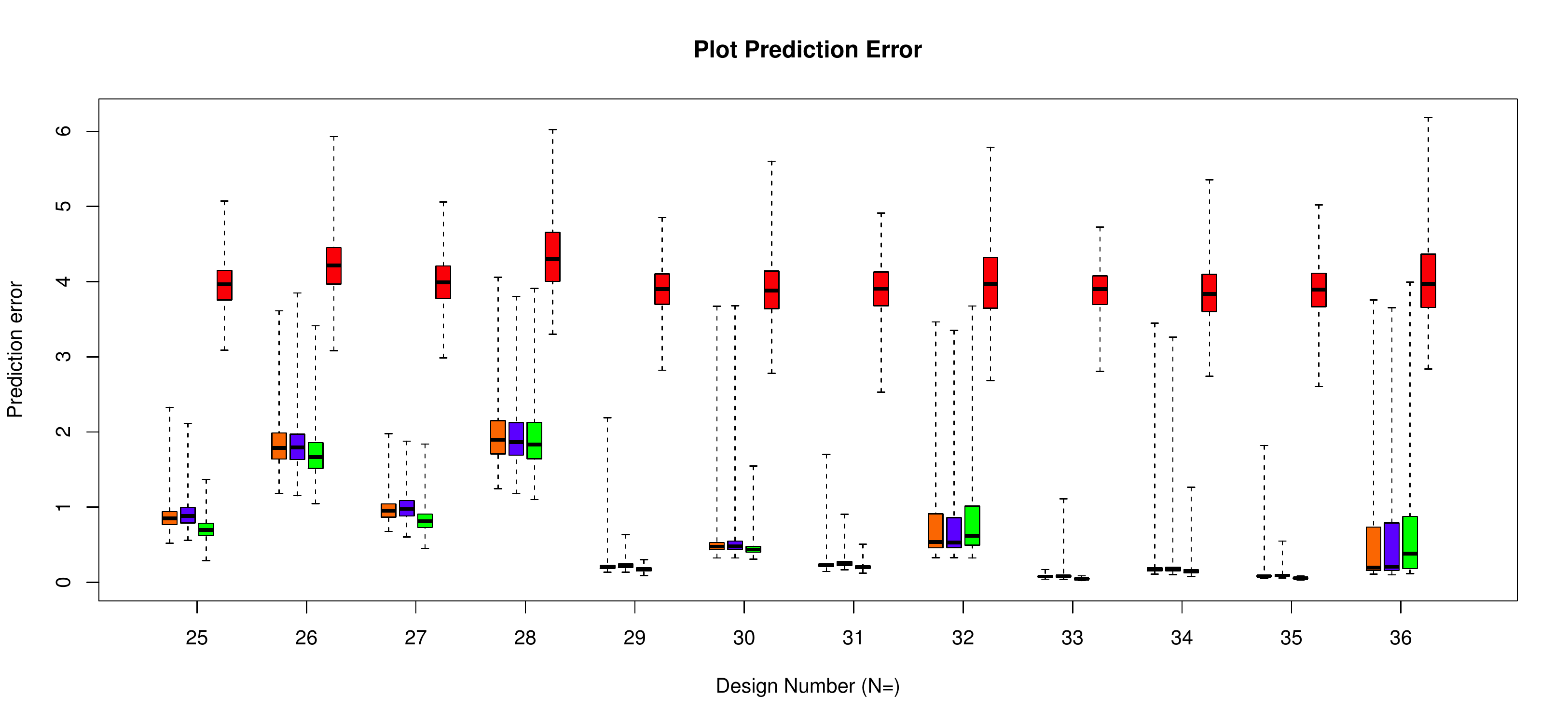}
\caption{Prediction error for designs $N=1,\ldots,36$ with dataset Leukemia. For each of the 36 designs the 
boxplots represent the prediction error for DPi (orange), DPd (blue) LK (green) and LN (red) based on 1000 
replicates.}
\label{Boxplot_Pred}
\end{figure} 
%
\begin{figure}
\centering
\includegraphics[width=\textwidth]{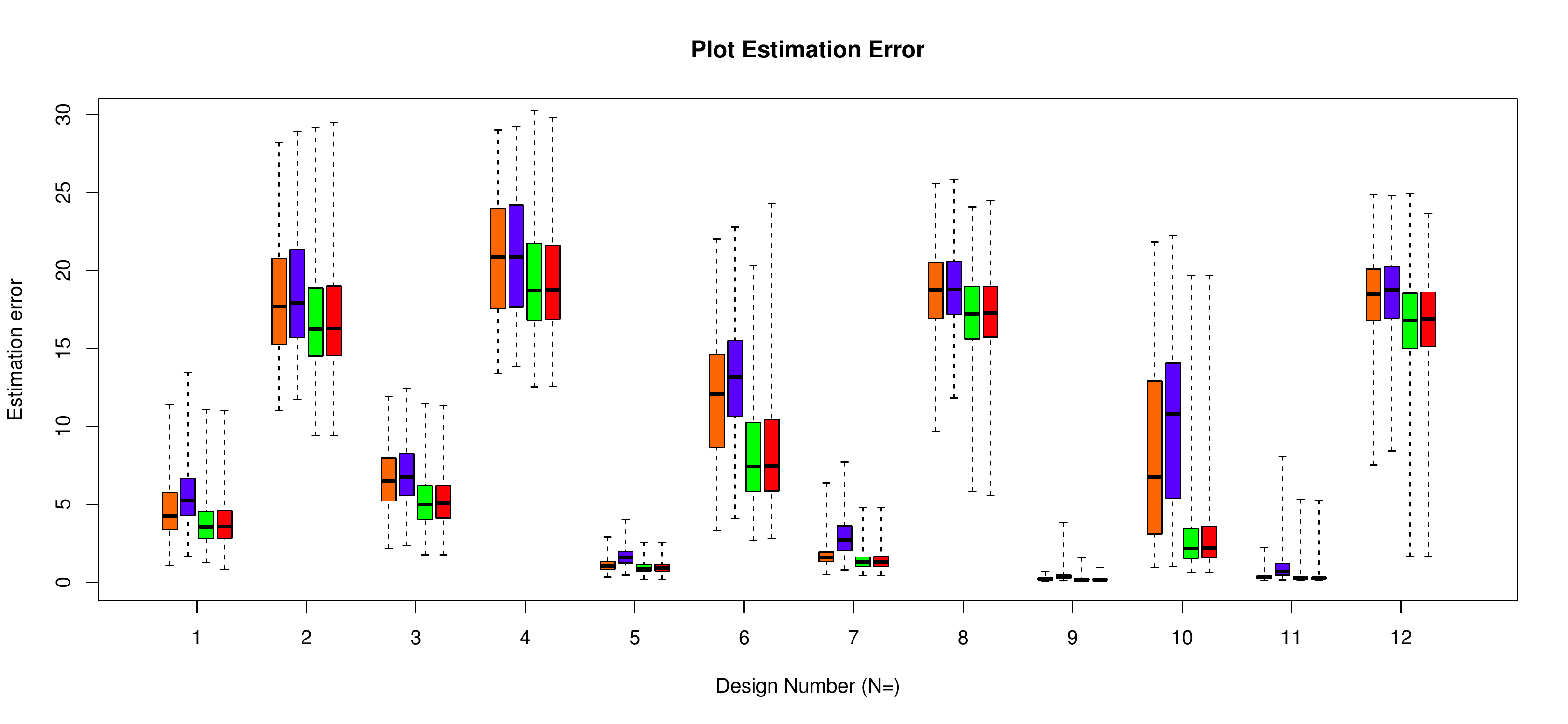}\\
\includegraphics[width=\textwidth]{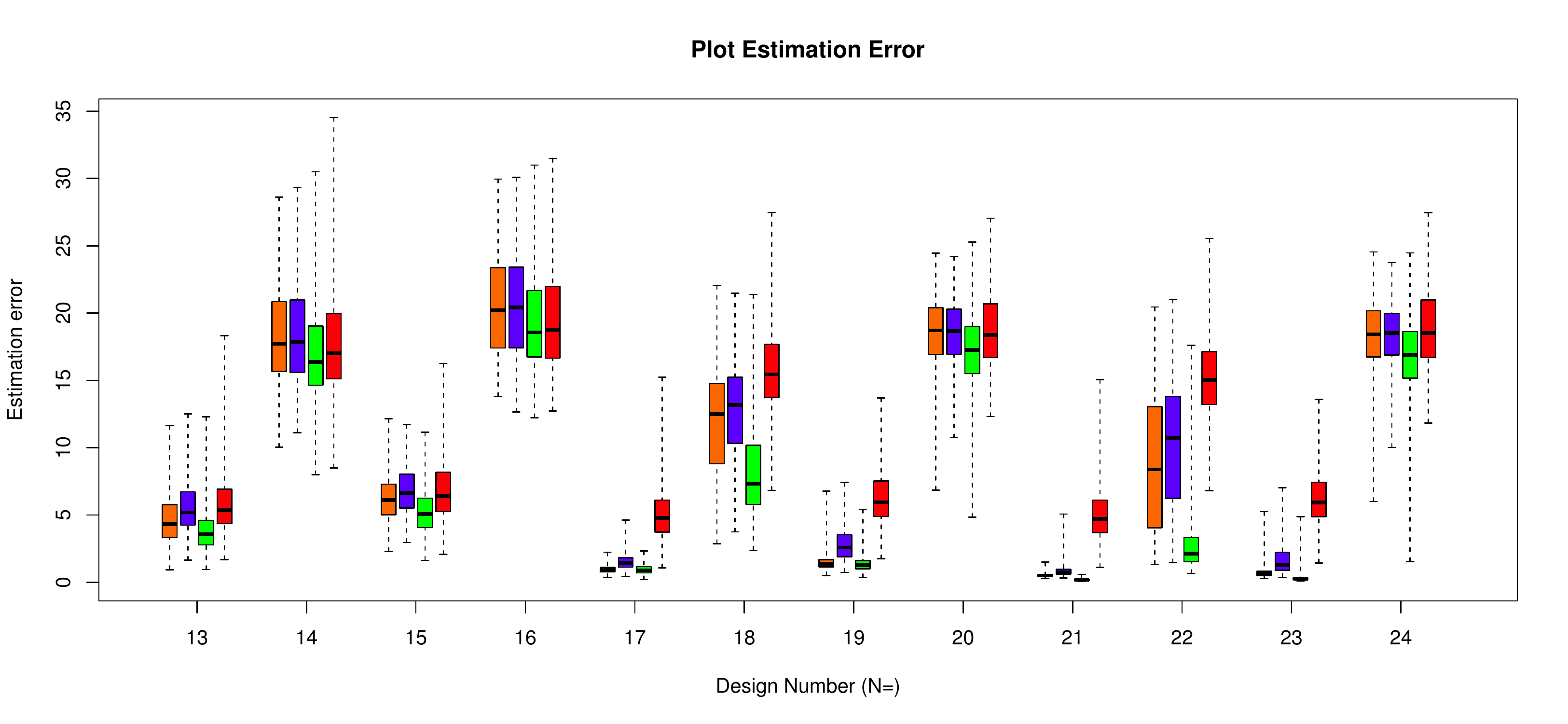}\\
\includegraphics[width=\textwidth]{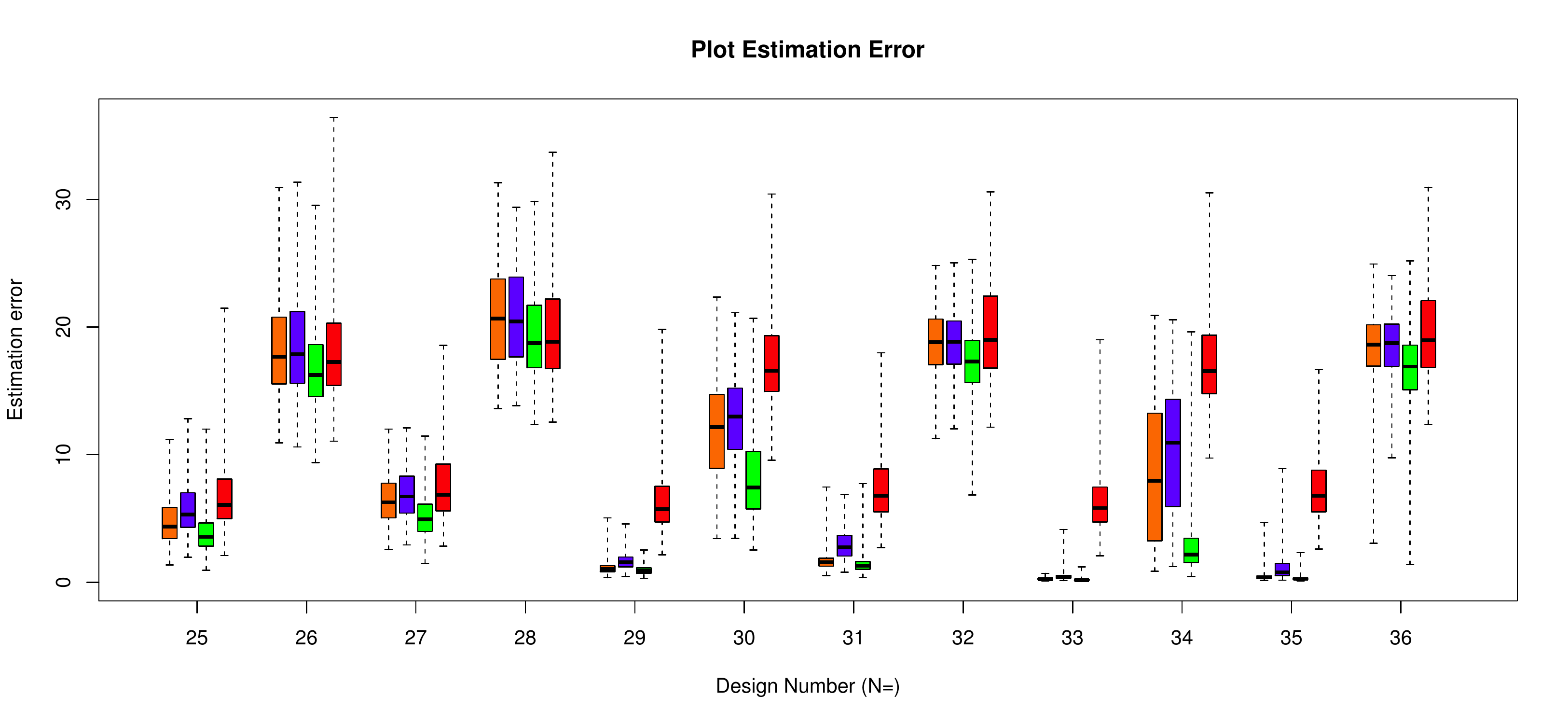}\\
\caption{Prediction error for designs $N=1,\ldots,36$ with dataset Leukemia. For each of the 36 designs the 
boxplots represent the estimation error for DPi (orange), DPd (blue) LK (green) and LN (red) based on 1000 
replicates.}
\label{Boxplot_Estim}
\end{figure} 
%
\subsubsection*{Estimation of the nuisance function:}
We now want to give a closer look at the estimation of the function $g^0$. From the results of the simulation
we can conclude that
\begin{itemize}
 \item The quality of the estimation of the function $g^0$ depends on the performance of $\hat\beta$. In general
better estimation of $\beta^0$ means a better estimation of $g^0$.
 \item If an increase of lSNR improves the quality of the estimation of $\beta^0$, then it also improves
the estimation of $g^0$ (see Figure \ref{gEst.Figure}).
 \item The function choice of $g^0$ does not play crucial role for the quality of the fit.
 \item Even if the estimation of $\beta^0$ (and consequently of $g^0$) are very bad, we can at least have an 
idea
of ``how the original function $g^0$'' looks like (see Figure \ref{gEst.Figure2}).
\end{itemize}

Figure \ref{gEst.Figure} shows the estimation of $g_3^0$ where $p=250$, $s_0=5$ and
the signal to noise ratio varies between 2 and 32. As one can expect, improving SNR makes the estimation of
$g^0$ more accurate.

\begin{center}
\begin{figure}
\centering
\begin{tabular}{ccc}
\includegraphics[width=112pt]{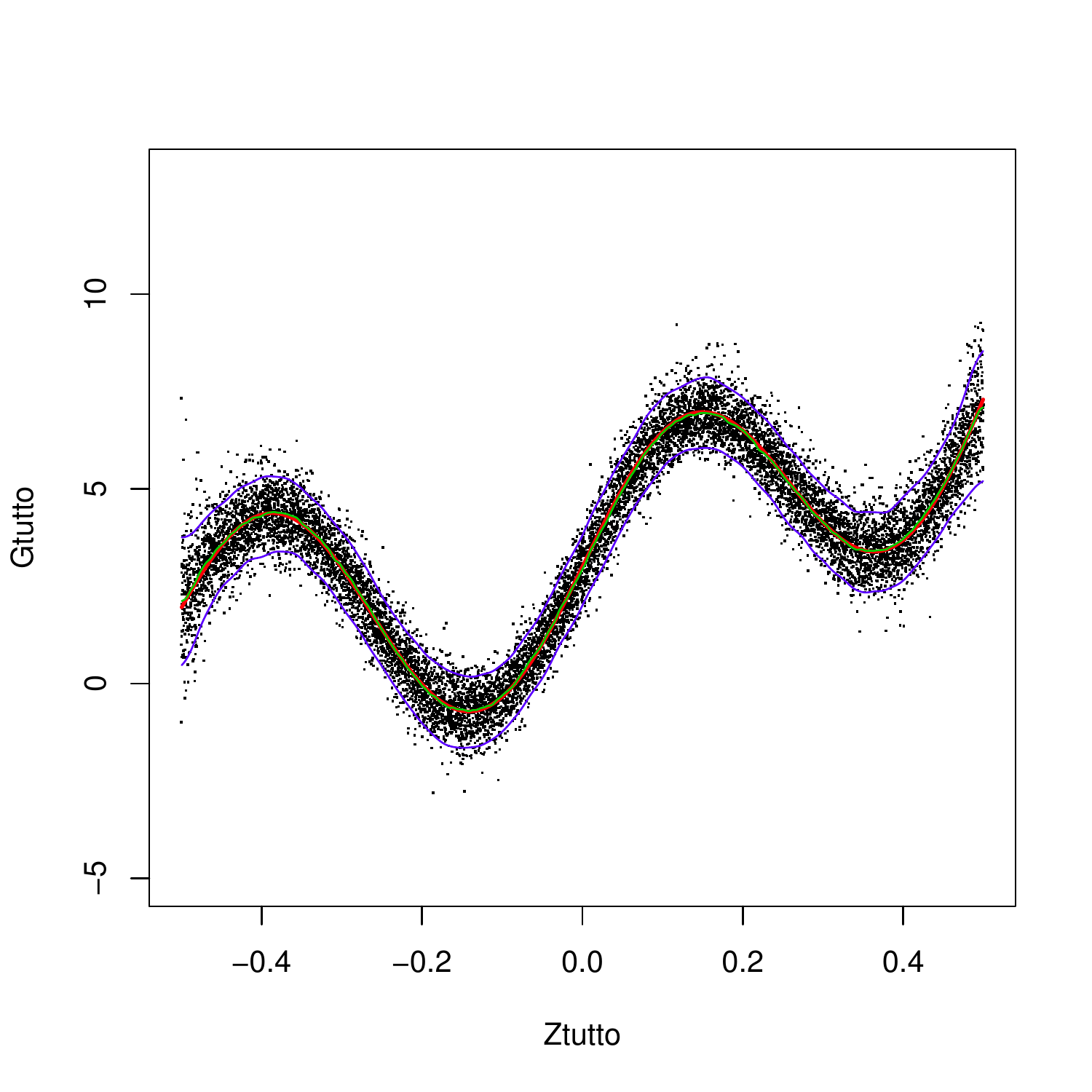}&
\includegraphics[width=112pt]{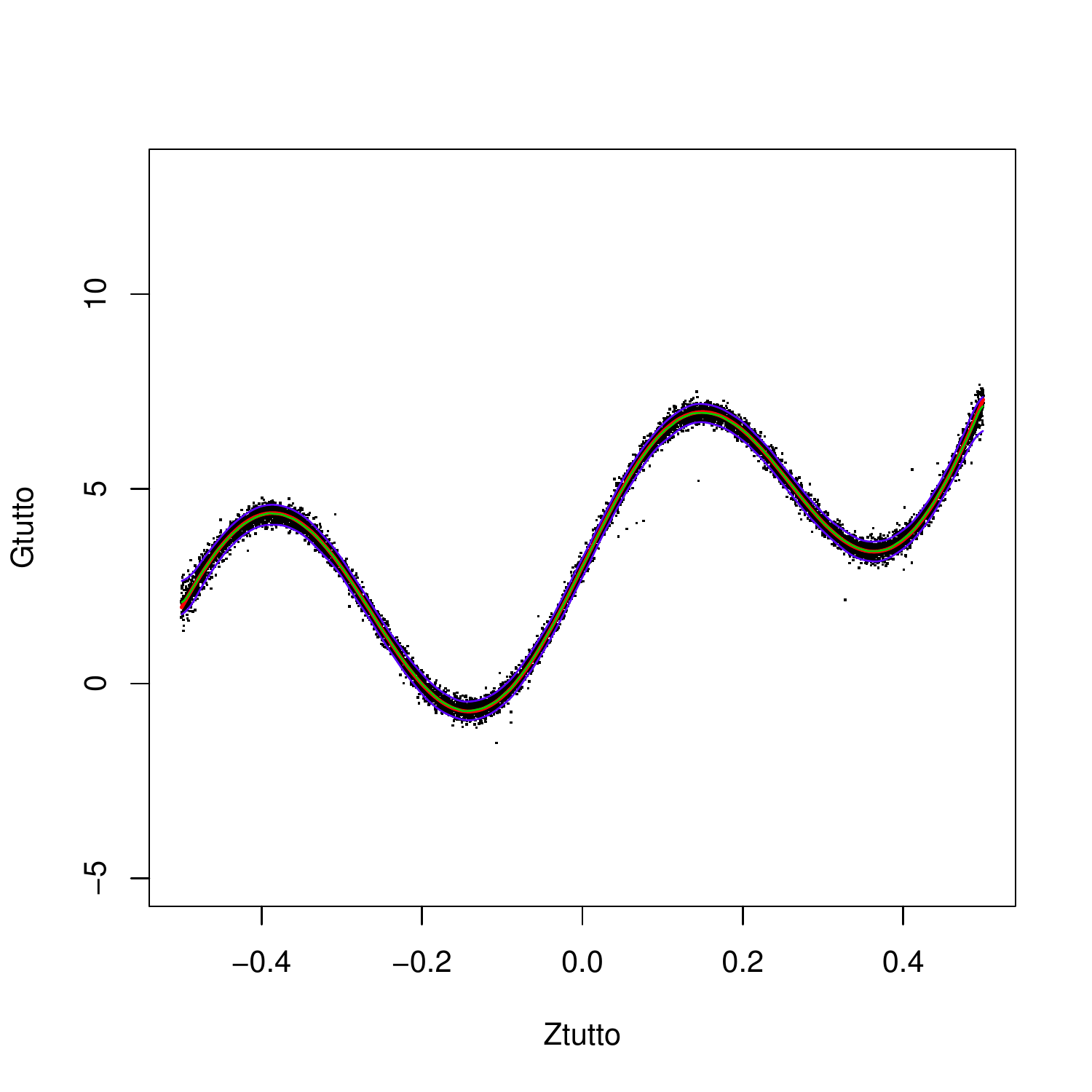}&
\includegraphics[width=112pt]{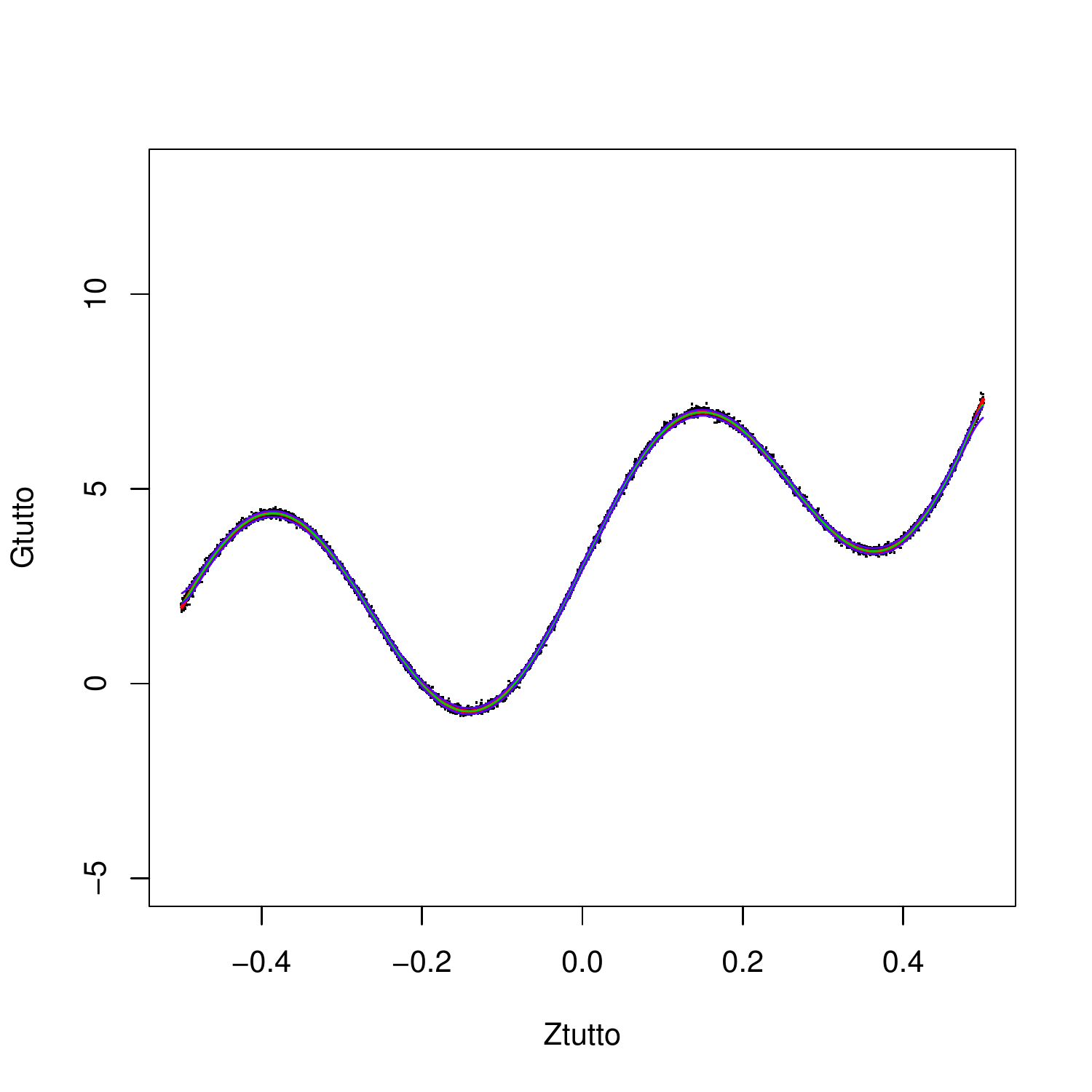}
\\
1a) & 2a) & 3a)\\
&&\\
\includegraphics[width=112pt]{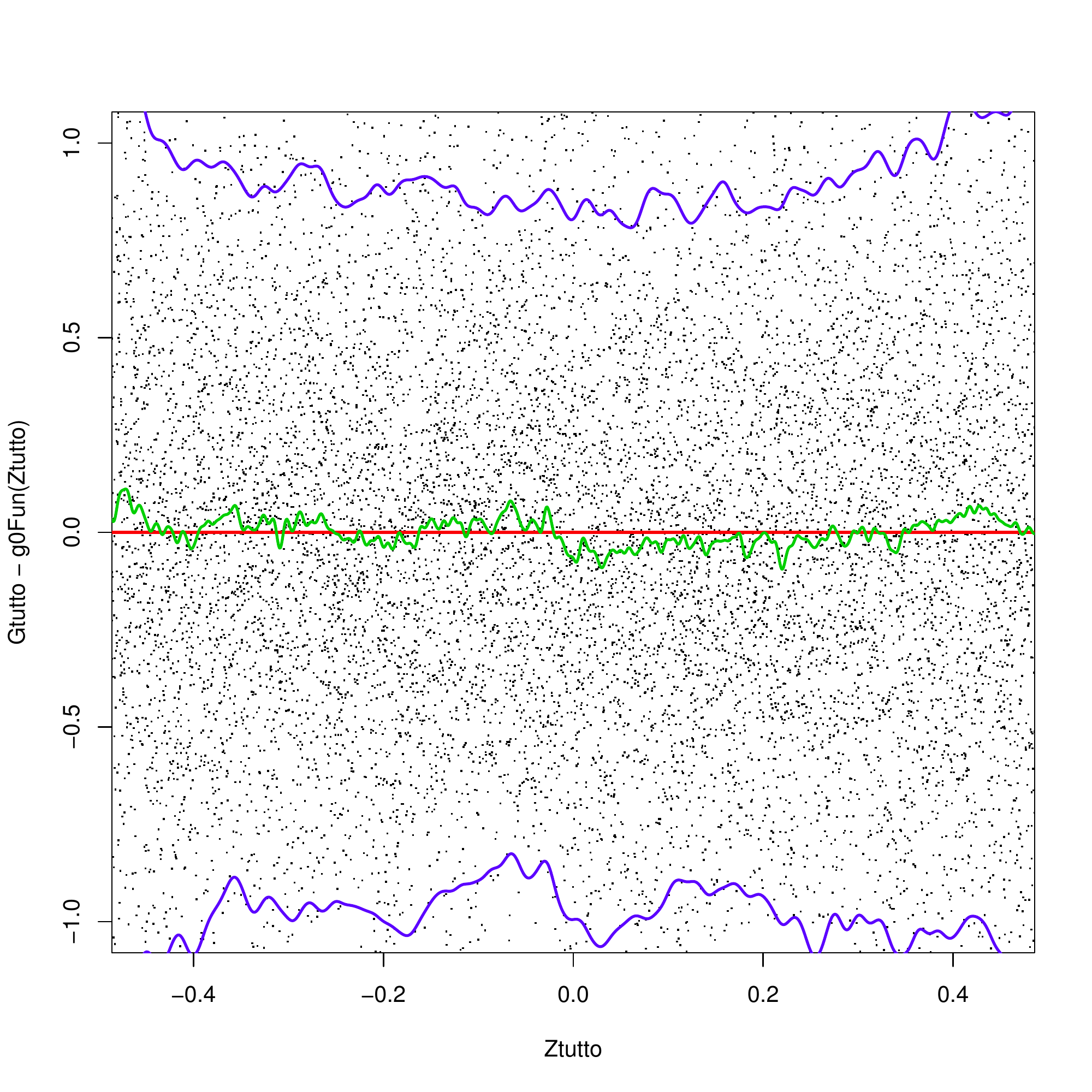}&
\includegraphics[width=112pt]{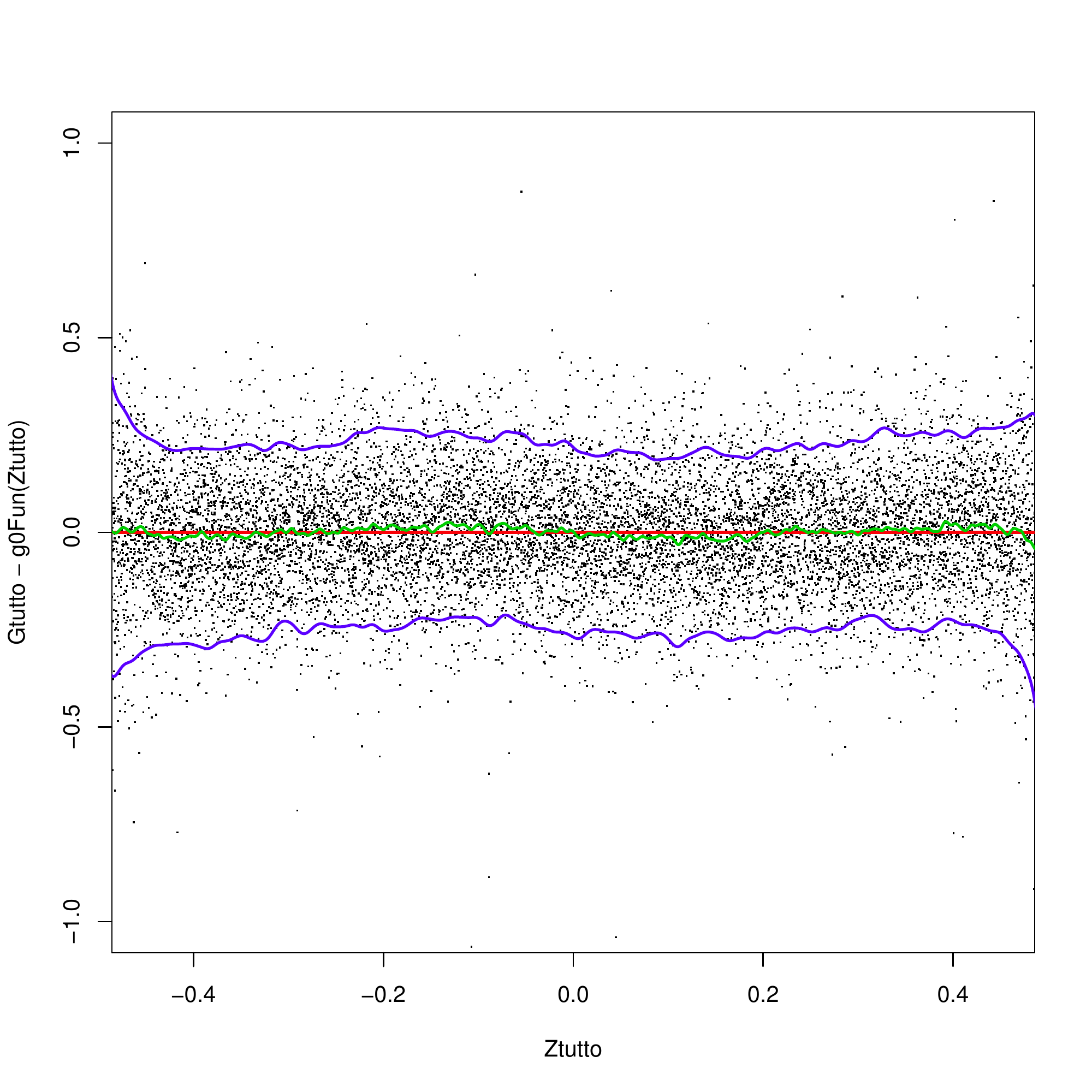}&
\includegraphics[width=112pt]{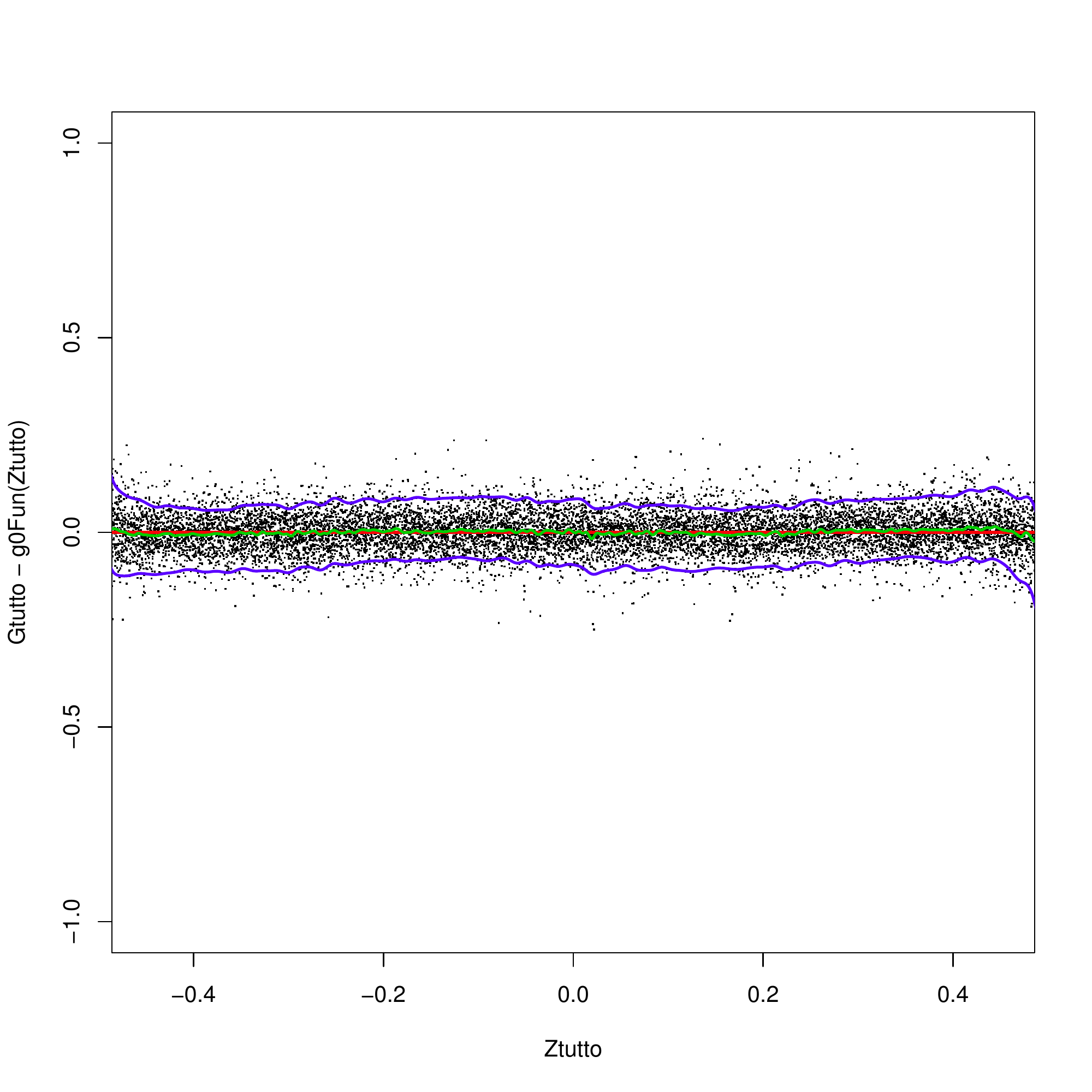}
\\
1b) & 2b) & 3b)\\
\end{tabular}
\caption{Estimation of the function $g^0$ for the designs $N=25,29,33$ respectively in dataset Leukemia. Here 
$p=250$, $s_0=5$ and the nuisance function $g_3$ are kept constant, whether $lSNR$ varies between 2 and 32. In 
the first row the estimated
function is plotted, in the second the residuals. The red line represents the true function. The blue line
corresponds to the mean of the estimated functions, and the blue lines define pointwise a 90\% confidence
interval. \newline An increase of lSNR generally improves the estimation of $g^0$.}
\label{gEst.Figure}
\end{figure} 
\end{center}

\begin{remark}[Estimation quality of $g^0$ for bad working LK method:]
If LK does not properly estimate $\beta^0$, then we can not expect that our method does better. The 
estimation of $g^0$ is then also problematic, but even in this really bad and difficult case, it is possible to 
at least recognise some trends in $g^0$ and have an idea of the order of magnitude of $g^0$. Even if not 
totally satisfying this gives us a basic idea of how $g^0$ looks like.\\
We present as a prime example design number 28 (see Figure \ref{gEst.Figure2}, where $p=1000,~s_0=15$ and 
$SNR=2$. The plot shows that the $10\%$ point wise confidence bounds more or less correspond to $g^0\pm 2.5$.\\
Looking at some randomly chosen results for the estimation of $g^0$, we are still able to somehow have an 
idea on how $g^0$ looks like, even if the estimation is very rude.
\end{remark}

%
%
\begin{figure}
\centering
\begin{tabular}{ccc}
\includegraphics[width=112pt]{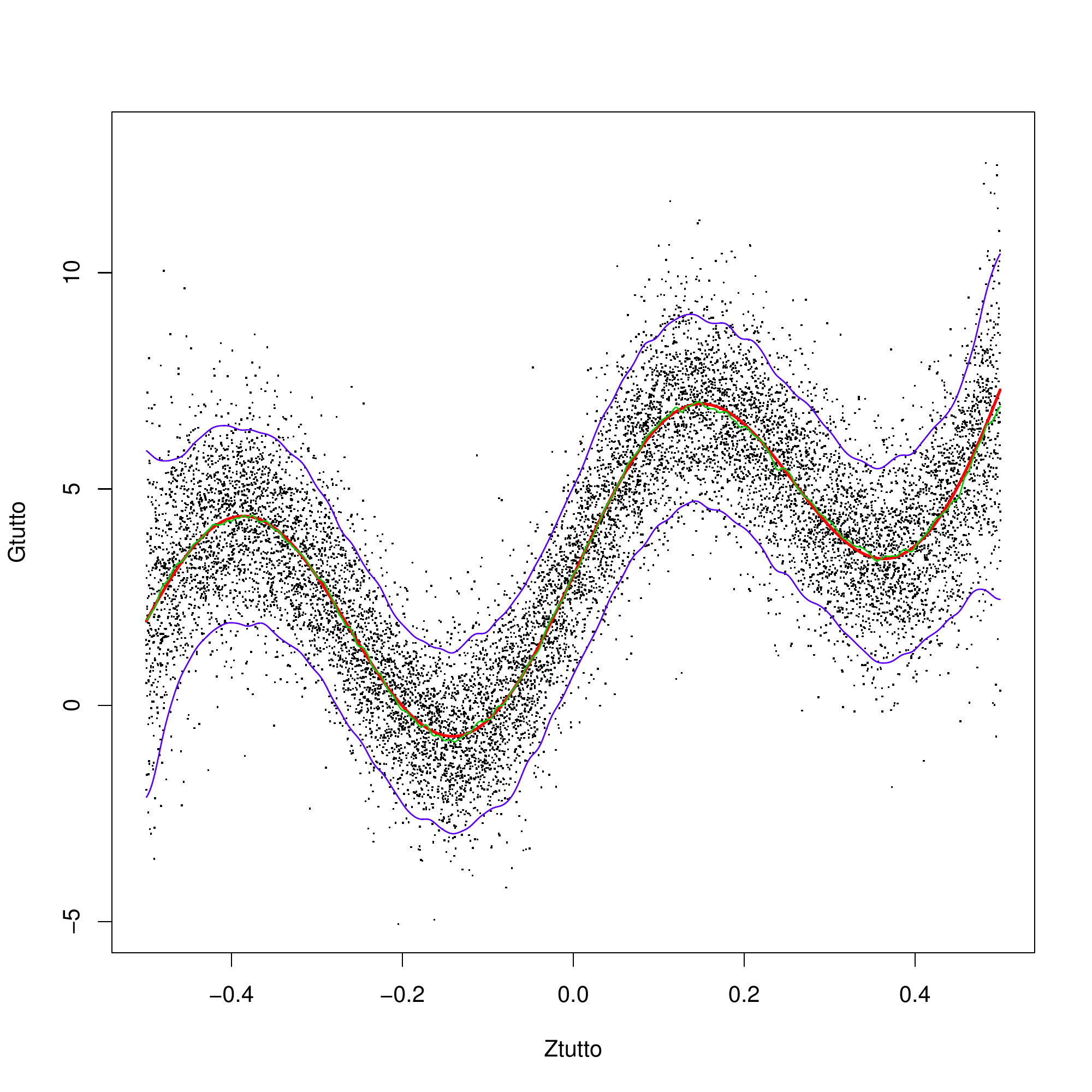}&
\includegraphics[width=112pt]{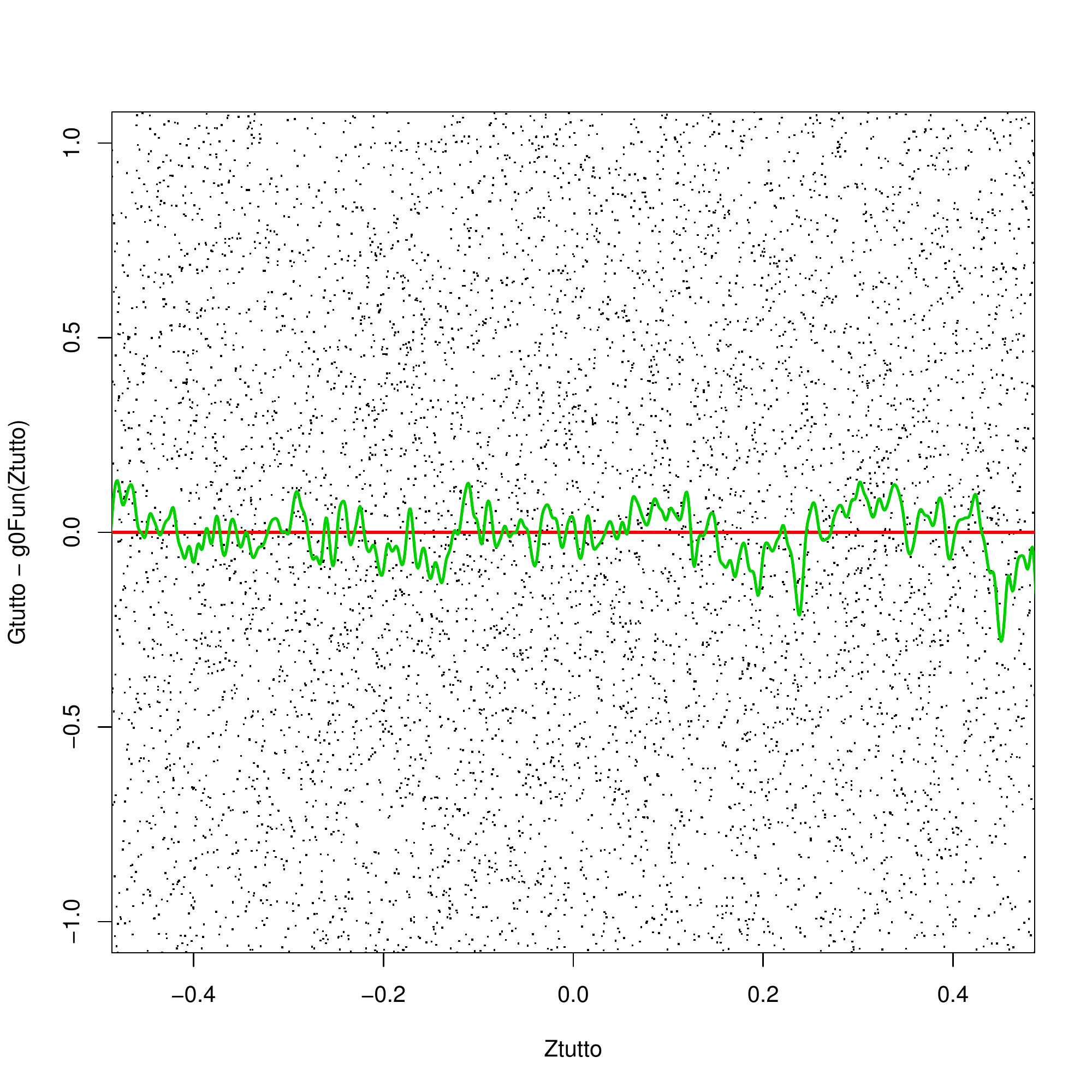}&
\includegraphics[width=112pt]{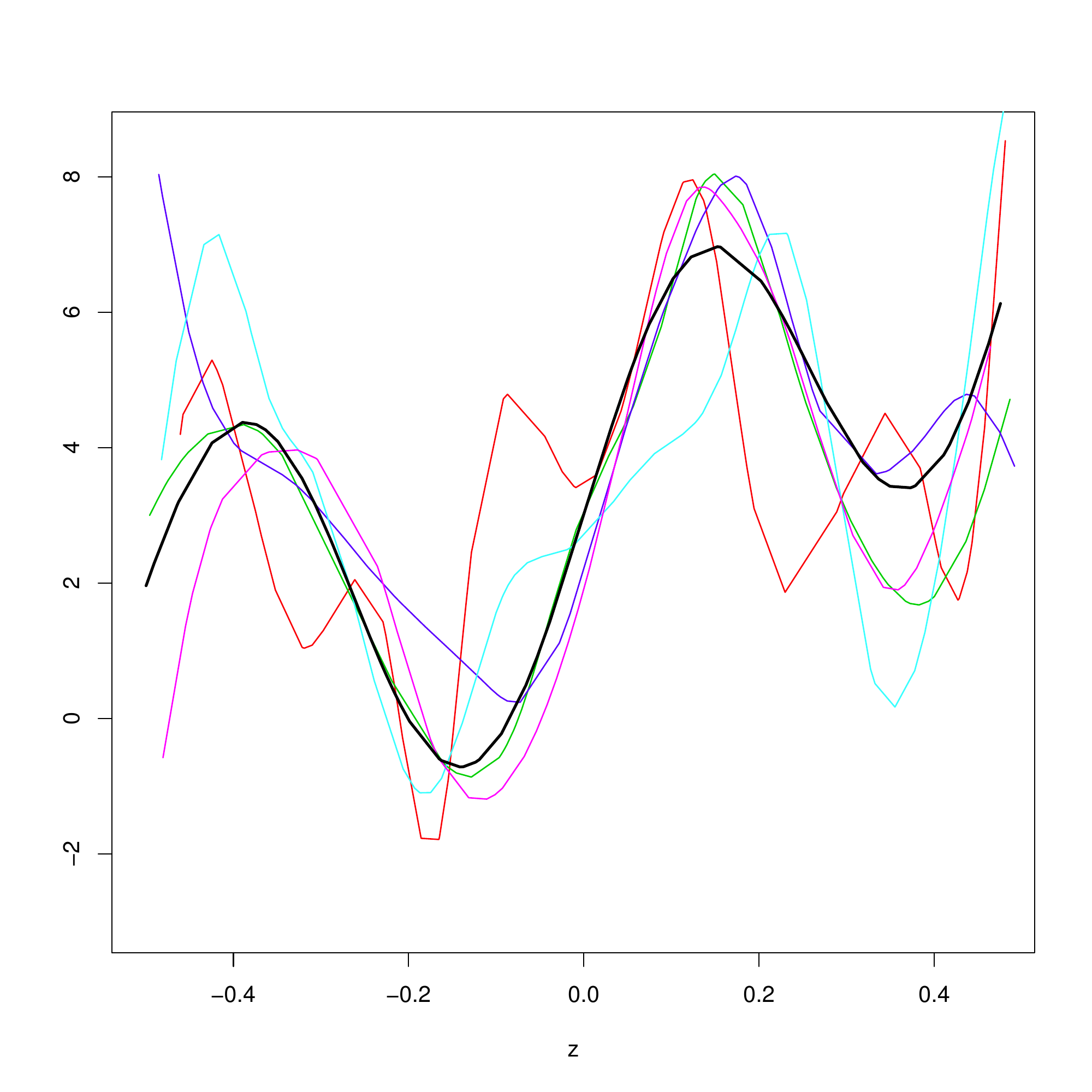}
\\
i) & ii) & iii)\\
\end{tabular}
\caption{Estimation of the function $g^0$ for design $N=28$ ($p=1000,~s_0=15,~SNR=2$).
The first plot shows the true function 
$g^0$ (red), the mean of the estimated function on 1000 replicates (green) and 90\% confidence bounds (blue). 
The coordinates of the black dots are $(z_i,g_i)$.
In the second figure the residuals are plotted.
\newline The third plot represents the function $g^0$ (black) and the estimated functions $\hat g$ for five 
(randomly chosen) replicates.}
\label{gEst.Figure2}
\end{figure} 
%
\subsubsection*{Limit cases:}
Here we briefly discuss, without presenting all numerical results, some extreme cases.\\
\underline{Very large $g^0$:} If the nuisance function completely dominates the linear part the estimation of 
$g^0$ is easier (small relative error) than in the previous cases, but we pay in term of estimation of 
$\beta^0$. For example for $g^0(z):=200g^0_3(z)+400$, even if LK has a TPR of 100\%, our method is on average 
not able to pick out more than 28-32\% of the true positives. This is in any case better than LN, which in this 
case scores similarly as just randomly selecting components.

\vskip .1in
\underline{The tuning parameter $\mu$}
In our design $\mu$ is chosen as $\mu_0:=n^{-2/5}/100$.
The choice of $\mu$ seems to play a marginal role in the estimation quality. 
We tried to repeat the simulations with some different values of $\mu$. Even multiplying or dividing $\mu_0$ by
a factor 10 does not substantially affect our results.\\
Our estimator is then quite robust in $\mu$, but as one can expect extreme values of $\mu$ (like 
$\mu=10000\cdot\mu_0$) leads to problematic estimation of $g^0$.

\vskip .1in
\underline{Very highly correlated covariates}
If $Z$ is strongly correlated with a linear combination of the active part of $X$, then $\Lambda_{\tilde X, {\rm 
min}}$ is (almost) equal to zero. This entails that the estimation is bad, but we can anyway still have 
appreciable prediction results.

\section{Conclusions} As partial linear models have been successfully applied, and
the parametric part can be estimated with parametric rate, it is both from
practical as well as theoretical point of view of interest to extend the model and
theory to the case where the linear part is high-dimensional. As we have shown,
the linear part can be estimated with oracle rates as if the nuisance function were known,
even though the rate for estimating the nuisance function is slower than the oracle rate
for the linear part. 

Our method of proof makes use of empirical process theory, in particular it shows uniform
probability inequalities for empirical $L_2$-norms. It moreover shows bounds for empirical correlations
between transformed observations, uniformly over a class of transformations. 

The simulations show that it indeed may
pay off to include the nuisance function into the model. From computational point of view, the methodology
does not impose additional difficulties when using quadratic penalties for the nuisance part.


\section{Proofs}\label{Proffs.section}
\subsection{Sketch of proofs}
We prove Theorem \ref{1.theorem} in two steps. We first assume that we are under some event $\mathcal{T}$.
Conditionally on $\mathcal{T}$ all conclusions of Theorem \ref{1.theorem} hold with probability $1$ (see Lemma
\ref{Teo_T.lemma}). Then we show that the event $\mathcal{T}$ has large probability, at least
$1-3\exp[-n\deltaTOT^2\mu^2]$ (see Lemmas \ref{classG.lemma}-\ref{T2.lemma}). This ends the proof.
Theorem \ref{2.theorem} supplies stronger results. The proof goes along the lines of the first theorem but uses
the first theorem as starting point.

\subsection{The result assuming empirical process conditions}

%

Let for each $R>0$
$${\cal F} (R):= \{ f: \tau_{\mu,R}(f) \le R \} $$
and
$${\cal T} (\delta_0 , R):=  {\cal T}_1 (\delta_0, R) \cap {\cal T}_2 (\delta_0, R) , $$
where
$${\cal T}_1 (\delta_0 , R) :=  \biggl \{(X,Z)\ :\ \sup_{f \in {\cal F} (R)} \biggl | \|f \|_n^2 - \| f 
\|^2 
 \biggr | \le
\delta_0 R^2  \biggr \} $$
and 
$$ {\cal T}_2 (\delta_0, R) := \biggl \{(X,Z,E)\ :\ |\sup_{f \in {\cal F} (R)} |  E^T f /n | \le 
\delta_0 
R^2  \biggr
\} . $$

\begin{lemma} \label{Teo_T.lemma}
Let $\mu^2\le  \delta_0 R^2 / ( 2 J^2 (g^0)) $. Assume Condition \ref{eigenvalue.condition} and that 
$$  {4  \lambda^2 s_0 \over     \Lambda_{\tilde X, {\rm min}}^2 }  \le \delta_0 R^2 . $$
Then on ${\cal T} (\delta_0 , R)$, it holds that
$\tau( \hat f - f^0) \le R $.
\end{lemma}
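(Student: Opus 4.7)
The plan is a standard convexity-with-peeling argument. Suppose for contradiction that, on $\mathcal{T}(\delta_0, R)$, one has $\tau(\hat f - f^0) > R$. Set $s := R/\tau(\hat f - f^0) \in (0,1)$ and form the convex combination $\tilde\beta := s\hat\beta + (1-s)\beta^0$, $\tilde g := s\hat g + (1-s)g^0$, $\tilde f := X\tilde\beta + \tilde g$. The cone property $\tau(sf) = s\tau(f)$ gives $\tau(\tilde f - f^0) = R$, so $\tilde f - f^0 \in \mathcal{F}(R)$. Because the objective in \eqref{Estimator.Definition.Equation} is convex in $(\beta, g)$ (with $J^2$ convex as $J$ is a seminorm), the minimality of $(\hat\beta, \hat g)$ yields $L(\tilde\beta, \tilde g) \le L(\beta^0, g^0)$, which after expanding $Y = f^0 + E$ becomes the basic inequality
$$\|\tilde f - f^0\|_n^2 + \lambda\|\tilde\beta\|_1 + \mu^2 J^2(\tilde g) \le \tfrac{2}{n} E^T(\tilde f - f^0) + \lambda\|\beta^0\|_1 + \mu^2 J^2(g^0).$$

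Since $\tilde f - f^0 \in \mathcal{F}(R)$, on $\mathcal{T}_1(\delta_0, R)$ we may swap $\|\tilde f - f^0\|_n^2$ for $\|\tilde f - f^0\|^2 - \delta_0 R^2$, and on $\mathcal{T}_2(\delta_0, R)$ the empirical-correlation term is bounded by $2\delta_0 R^2$. Combining with the standard LASSO inequality $\lambda\|\beta^0\|_1 - \lambda\|\tilde\beta\|_1 \le \lambda\|\tilde\beta_{S_0} - \beta^0_{S_0}\|_1 - \lambda\|\tilde\beta_{S_0^c}\|_1$ and the hypothesis $\mu^2 J^2(g^0) \le \delta_0 R^2/2$, I would arrive at
$$\|\tilde f - f^0\|^2 + \lambda\|\tilde\beta_{S_0^c}\|_1 + \mu^2 J^2(\tilde g) \le \tfrac{7}{2}\delta_0 R^2 + \lambda\|\tilde\beta_{S_0} - \beta^0_{S_0}\|_1.$$

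The eigenvalue condition then enters via the orthogonal decomposition of Remark \ref{OrthogonalPorj.remark}: $\|\tilde f - f^0\|^2 \ge \|\tilde X(\tilde\beta - \beta^0)\|^2 \ge \Lambda_{\tilde X, {\rm min}}^2 \|\tilde\beta - \beta^0\|_2^2$. Together with $\|\tilde\beta_{S_0} - \beta^0_{S_0}\|_1 \le \sqrt{s_0}\|\tilde\beta - \beta^0\|_2$ and the hypothesis $4\lambda^2 s_0/\Lambda_{\tilde X, {\rm min}}^2 \le \delta_0 R^2$, this gives $\lambda\|\tilde\beta_{S_0} - \beta^0_{S_0}\|_1 \le \tfrac{R\sqrt{\delta_0}}{2}\|\tilde f - f^0\|$. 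An AM-GM split absorbs the $\|\tilde f - f^0\|$ factor into the quadratic on the left-hand side and produces three simultaneous bounds of the form $\|\tilde f - f^0\|^2 \le C_2 \delta_0 R^2$, $\lambda\|\tilde\beta - \beta^0\|_1 \le C_3 \delta_0 R^2$, and $\mu^2 J^2(\tilde g) \le C_4 \delta_0 R^2$ with absolute constants.

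Finally I would substitute these bounds into the definition of $\tau(\tilde f - f^0)$, using the triangle bound $J^2(\tilde g - g^0) \le 2 J^2(\tilde g) + 2 J^2(g^0)$ to pass from $J^2(\tilde g)$ (controlled above) to the $J^2(\tilde g - g^0)$ appearing inside the square root. This yields $\tau(\tilde f - f^0) \le c\sqrt{\delta_0}\, R$ for an absolute constant $c$; choosing $\delta_0$ small enough that $c\sqrt{\delta_0} < 1$ gives $\tau(\tilde f - f^0) < R$, contradicting $\tau(\tilde f - f^0) = R$. The main obstacle is the constant bookkeeping in the AM-GM step: three nonnegative quantities on the left-hand side of the basic inequality must be simultaneously controlled against a mixed linear-plus-quadratic right-hand side, and the $J^2$ penalty is particularly delicate because the basic inequality directly controls $J^2(\tilde g)$ whereas $\tau$ involves $J^2(\tilde g - g^0)$, forcing an extra triangle-inequality loss that must still be absorbed by the same universal $\delta_0$.
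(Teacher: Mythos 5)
Your argument is correct and is essentially the paper's own proof: the same convexity trick with a rescaled convex combination $\tilde f$, the same basic inequality, the same use of $\mathcal{T}_1$ and $\mathcal{T}_2$ to pass to population norms, the same compatibility-type bound via Condition \ref{eigenvalue.condition} and the assumption $4\lambda^2 s_0/\Lambda_{\tilde X,{\rm min}}^2\le\delta_0 R^2$, and the same AM--GM absorption leading to $\tau(\tilde f-f^0)\le c\sqrt{\delta_0}\,R$. The only cosmetic differences are that you run it as a contradiction with $s=R/\tau(\hat f-f^0)$ where the paper argues directly with $s=R/(R+\tau(\hat f-f^0))$, and you use the cruder bound $J^2(\tilde g-g^0)\le 2J^2(\tilde g)+2J^2(g^0)$ in place of the paper's inequality \eqref{Step_Lemma_Teo_T.equation_4}, which only changes the numerical constant multiplying $\sqrt{\delta_0}$.
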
 
\begin{proof}

Define
$$\tilde f := s \hat f + ( 1- s ) f^0 = X  \tilde \beta  + \tilde g , $$
where
$ \tilde \beta = s \hat \beta + (1- s) \beta^0 $ and $\tilde g = s \hat g + (1-s ) g^0 $ and 
$$s := {R \over R+ \tau ( \hat f - f^0 ) } . $$

We prove this lemma with the following trick. The function $\tilde f$ is defined in such a 
way that $\tau(\tilde f-f^0)\leq R$. Then on $\mathcal{T}$, from the ``basic inequality" below, we get 
$\tau(\tilde 
f-f^0)\leq R/2$. This implies $\tau(\hat f-f^0)\leq R$.
\bigskip \\
 We now present the details of this trick. By convexity and definition of $\hat f$, we know that
$$ \| Y - \tilde f \|_n^2 + \lambda \| \tilde \beta \|_1 + \mu^2 J^2 ( \tilde g) \le$$
$$ s \biggl (  \| Y - \hat f \|_n^2 +  \lambda  \| \hat \beta \|_1 +  \mu^2  J^2  (\hat g) \biggr ) +
(1-s ) \biggl ( \| Y - f^0 \|_n^2 + \lambda  \| \beta^0 \|_1 +  \mu^2  J^2  (g^0)\biggr ) $$
$$ \le  \| Y - f^0 \|_n^2 + \lambda  \| \beta^0 \|_1 +  \mu^2  J^2  (g^0) . $$
This is the ``basic inequality". 
Using that $Y=f^0+E$ rewrite this to
$$\| \tilde f - f^0 \|_n^2 + \lambda \| \tilde \beta \|_1 + \mu^2 J^2 ( \tilde g ) \le
2 E^T ( \tilde f - f^0 ) /n + \lambda \| \beta^0 \|_1 + \mu^2 J^2 (g^0). $$
Now use that
$$\tau ( \tilde f - f^0 ) = s \tau ( \hat f - f^0 ) =  {R \tau (\hat f - f^0 )  \over R+ \tau ( \hat f - f^0 )
}\le R . $$
Hence on ${\cal T} (\delta_0, R)$, we find
$$\| \tilde f - f^0 \|^2 +  \lambda \| \tilde \beta \|_1 + \mu^2 J^2 ( \tilde g ) \le 3 \delta_0 R^2 +
\lambda \| \beta^0 \|_1 + \mu^2 J^2 (g^0) . $$
Using Equation \eqref{f_Orthogonal_decomposition.equation}, the identity
$\|\tilde\beta\|_1=\|\tilde\beta_{S_0^c}\|_1+\|\tilde\beta_{S_0}\|_1$ and then the triangle inequality, we
obtain
$$
\| \tilde X ( \tilde \beta - \beta^0 ) \|^2 + \| h ( \tilde \beta - \beta^0)  + ( \tilde g - g^0 ) \|^2 +
\lambda \| \tilde \beta_{S_0^c } \|_1 + \mu^2 J^2 ( \tilde g )
$$
\be \label{Step_Lemma_Teo_T.equation_1}
\le 3 \delta_0 R^2 + \mu^2 J^2 (g^0)+ \lambda \| \beta^0  - \tilde \beta_{S_0}\|_1 .
\ee
Using the approximation $uv\leq u^2 +\frac{v^2}{4}$ and the assumptions of the
lemma, we get
\be 
\lambda||\tilde \beta_{S_0}-\beta^0||_1&\leq& \lambda \sqrt{s_0}||\tilde \beta_{S_0}-\beta^0|| \leq
\lambda \sqrt{s_0}||\tilde \beta-\beta^0|| \nonumber \\
&\leq& \frac{\lambda \sqrt{s_0}}{\Lambda_{\tilde X,{\rm min}}} ||\tilde X(\tilde\beta-\beta_0)|| \leq
\frac{\lambda^2
s_0}{\Lambda_{\tilde X,{\rm min}}^2}+\frac{||\tilde X (\tilde\beta-\beta_0)||^2}{4}\nonumber \\
&\leq& \frac{\delta_0 R^2}{4}+\frac{||\tilde X (\tilde\beta-\beta_0)||^2}{4} \label{Step_Lemma_Teo_T.equation_2}
\ee
and similarly
\be \label{Step_Lemma_Teo_T.equation_3}
2\lambda ||\tilde \beta_{S_0}-\beta^0||_1\leq \frac{\lambda^2 s_0}{\Lambda_{\tilde
X,{\rm min}}^2}+||\tilde X (\tilde\beta-\beta_0)||^2 \leq \frac{\delta_0 R^2}{4}+||\tilde X 
(\tilde\beta-\beta_0)||^2\ . 
\ee
Note now that using $4uv\leq 4u^2 +v^2$ it yields
$$
\frac{3}{4}J^2(\tilde g-g^0)=\frac{3}{4}\left( J^2(\tilde g)+J^2(g^0)\right)+\frac{3}{2} J(\tilde g,g^0)
$$
$$
\leq \frac{3}{4}\left( J^2(\tilde g)+J^2(g^0)\right) +\frac{1}{4} \cdot 4 \cdot \frac{3}{2}J(\tilde g,g^0)
$$
\be \label{Step_Lemma_Teo_T.equation_4}
\leq \frac{3}{4}\left( J^2(\tilde g)+J^2(g^0)\right) +\frac{9}{4}J^2(g^0)+\frac{1}{4}J^2(\tilde g)
= J^2(\tilde g)+3 J^2(g^0)\ . 
\ee

Adding on both sides of inequality \eqref{Step_Lemma_Teo_T.equation_1}  $\frac{3}{4}\mu^2 J^2(\tilde g-g^0)$ and
using successively Inequalities \eqref{Step_Lemma_Teo_T.equation_2} and \eqref{Step_Lemma_Teo_T.equation_4} this 
 
leads to
$$
\|\tilde X ( \tilde \beta - \beta^0 ) \|^2 + \| h ( \tilde \beta - \beta^0)  + ( \tilde g - g^0 )
\|^2 +\frac{3}{4} \mu^2 J^2(\tilde g-g^0)
$$
$$
\leq 3\delta_0 R^2+ 4\mu^2 J^2(g^0)+\frac{\delta_0 R^2}{4} +\frac{1}{4}\| \tilde X (\tilde \beta - \beta^0 )\|^2
- \lambda \| \tilde \beta_{S_0^c } \|_1 \ . 
$$
By the assumption $\mu^2 \le \delta_0 R^2 / (2J^2 (g^0))$ we obtain
$$
\frac{3}{4}\|\tilde X ( \tilde \beta - \beta^0 ) \|^2 + \| h ( \tilde \beta - \beta^0)  + ( \tilde g - g^0 )
\|^2 +\frac{3}{4} \mu^2 J^2(\tilde g-g^0)
$$
$$
\leq 3\delta_0 R^2+ 2\delta_0 R^2+\frac{1}{4}\delta_0 R^2\ . 
$$
Hence, by Remark \ref{OrthogonalPorj.remark}
$$
||\tilde f-f^0||^2 + \mu^2 J^2(\tilde g-g^0) \leq 
7\delta_0 R^2.
$$
 Therefore
 $$ \left( \| X ( \tilde \beta - \beta^0)  + ( \tilde g - g^0 ) \|^2 + \mu^2 J^2 ( \tilde g ) \right)
^{1/2}\leq \left( 7 \delta_0\right)^{1/2}  R.$$
On the other hand, adding now $\lambda \| \tilde \beta_{S_0} - \beta^0 \|_1$ on both sides of 
\eqref{Step_Lemma_Teo_T.equation_1} leads to
 $$
\| \tilde X ( \tilde \beta - \beta^0 ) \|^2 + \| h ( \tilde \beta - \beta^0)  + ( \tilde g - g^0 ) \|^2 +
\lambda \| \tilde \beta - \beta^0  \|_1 + \mu^2 J^2 ( \tilde g ) 
$$
$$
 \leq 3 \delta_0 R^2 + \mu^2 J^2 (g^0)+ 2 \lambda \| \beta^0  - \tilde \beta_{S_0}\|_1
$$
$$
\leq \frac{15}{4} \delta_0 R^2 + \| \tilde X( \tilde \beta - \beta^0 ) \|^2 
$$
where in the last inequality the assumption $\mu^2 \le \delta_0 R^2 / (2 J^2 (g^0))$ and Inequality
\eqref{Step_Lemma_Teo_T.equation_3} were applied.\\
We now have:
$$
\lambda \| \tilde \beta - \beta^0  \|_1 \leq \frac{15}{4} \delta_0 R^2 
$$
and
$$
\frac{\lambda \| \tilde \beta - \beta^0  \|_1}{R\sqrt{\delta_0/2}}\leq \frac{15}{4}\sqrt{2 \delta_0}R~.
$$

Then
$$
\tau( \tilde f - f^0 ) \leq  \left( 7 \delta_0\right)^{1/2}  R+\frac{15}{4}\sqrt{2 \delta_0}R\leq
\frac{\sqrt{253\delta_0}}{2}R=\frac{R}{2} ,
$$
Thus we have 
$$
\tau( \tilde f - f^0 )=s \tau( \hat f - f^0 )=  \frac{R\tau( \hat f - f^0 )}{R+\tau( \hat f - f^0 )}\leq
\frac{R}{2}~.
$$
Hence
$$
\tau( \hat f - f^0 )\leq R
$$
\end{proof}

\subsection*{Technical tools}
The main goal of the following lemmas is to prove that the set $\mathcal{T}$ has large probability.\\
The next lemma is along the lines of Theorem 3.1 in \cite{gine2006concentration}. 

\begin{lemma} \label{classG.lemma} Let ${\cal G} (\tilde{R},\tilde{K})$ be a class of functions $g$ on
$\R^d$, uniformly
bounded by $\tilde{K}$,
and satisfying  
$\|g \| \le \tilde{R} $. Let $\{ \varepsilon_i \}_{i=1}^n $ be a Rademacher sequence independent of $\{ z_i
\}_{i=1}^n $, 
that is $\varepsilon_i$ are $i.i.d.$ and take values $\pm1$ with probability $1/2$ each. Suppose that for
some increasing strictly convex function $G$, it holds that
$$\EE \sup_{g \in {\cal G}(\tilde{R},\tilde{K}) }   \biggl | {1 \over n} \sum_{i=1}^n g_i \varepsilon_i  \biggr
| \le
\EE G^{-1} ( \tilde{R}_n^2 )/ \sqrt n  , $$
where $$\tilde{R}_n^2 = \sup_{g \in {\cal G} (\tilde{R},\tilde{K}) } \| g \|_n^2 . $$
Let $H$ be the convex conjugate of $G$. 
Then for $\tilde{R}^2 \ge H(16 \tilde{K} / \sqrt n) / 2 $, we have
$$\EE G^{-1} (\tilde{R}_n^2)   
  \le G^{-1} ( 4 \tilde{R}^2 )  $$
  and
$$
\EE \sup_{g \in {\cal G} (\tilde{R},\tilde{K}) } \biggl | \| g \|_n^2 - \| g\|^2 \biggr | \le 8\tilde{K} G^{-1}
(4 \tilde{R}^2 ) /\sqrt n .
$$
 
\end{lemma}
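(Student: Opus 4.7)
The plan is to combine standard empirical process machinery (symmetrization and contraction) with a fixed-point argument based on Young's inequality for the Fenchel pair $(G,H)$ to close a self-referential bound.

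First I would apply symmetrization to the empirical process $\sup_g |\|g\|_n^2 - \|g\|^2|$ in the usual way, which gives
\[
\EE \sup_{g\in {\cal G}(\tilde R,\tilde K)} \bigl|\|g\|_n^2 - \|g\|^2\bigr| \le 2\,\EE \sup_g \Bigl|\tfrac{1}{n}\sum_{i=1}^n \varepsilon_i g_i^2\Bigr|.
\]
Since on $[-\tilde K,\tilde K]$ the map $t\mapsto t^2$ is $2\tilde K$-Lipschitz and vanishes at $0$, the Ledoux--Talagrand contraction principle lets me replace $g_i^2$ by $g_i$ at the cost of a factor $4\tilde K$. Writing $\Delta:=\sup_g|\|g\|_n^2-\|g\|^2|$, this yields
\[
\EE \Delta \le 8\tilde K\,\EE \sup_g \Bigl|\tfrac{1}{n}\sum_i \varepsilon_i g_i\Bigr| \le \frac{8\tilde K}{\sqrt n}\,\EE G^{-1}(\tilde R_n^2),
\]
where the last step is exactly the hypothesis of the lemma.

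Next I would bound $\EE G^{-1}(\tilde R_n^2)$. Since $G$ is strictly convex and increasing, $G^{-1}$ is concave, so Jensen combined with $\tilde R_n^2 \le \tilde R^2 + \Delta$ gives
\[
\EE G^{-1}(\tilde R_n^2) \le G^{-1}\bigl(\tilde R^2 + \EE\Delta\bigr).
\]
Now comes the central trick: Young's inequality for the Fenchel conjugate pair gives $uv \le G(u)+H(v)$ for all $u,v\ge 0$. Applying this with $u=G^{-1}(\tilde R^2+\EE\Delta)$ and $v=16\tilde K/\sqrt n$, and combining with the previous two displays, I obtain
\[
\EE\Delta \;\le\; \tfrac{1}{2}\bigl(\tilde R^2 + \EE\Delta\bigr) + \tfrac{1}{2} H\!\bigl(16\tilde K/\sqrt n\bigr),
\]
which rearranges to $\EE\Delta \le \tilde R^2 + H(16\tilde K/\sqrt n)$. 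The hypothesis $\tilde R^2 \ge H(16\tilde K/\sqrt n)/2$ then forces $\EE\Delta \le 3\tilde R^2$, so $\tilde R^2 + \EE\Delta \le 4\tilde R^2$. Monotonicity of $G^{-1}$ now yields the first claimed inequality $\EE G^{-1}(\tilde R_n^2)\le G^{-1}(4\tilde R^2)$, and plugging this back into the symmetrization-contraction bound produces the second claim $\EE\Delta \le 8\tilde K G^{-1}(4\tilde R^2)/\sqrt n$.

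The only genuinely non-routine step is the fixed-point argument in the middle: the hypothesis bounds the Rademacher complexity in terms of $G^{-1}(\tilde R_n^2)$, but $\tilde R_n^2$ itself depends on $\Delta$, so one must decouple the two. Young's inequality is exactly the right tool because it converts the product $\tfrac{8\tilde K}{\sqrt n}\cdot G^{-1}(\cdot)$ into a sum in which the $G^{-1}$ term is replaced by its argument, allowing $\EE\Delta$ to be absorbed on the left. Getting the constants to match the stated $16\tilde K/\sqrt n$ in $H$ requires being careful to split the factor $8\tilde K/\sqrt n$ as $\tfrac{1}{2}\cdot 16\tilde K/\sqrt n$ before applying Young, but otherwise the computation is straightforward.
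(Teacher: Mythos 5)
Your proposal is correct and follows essentially the same route as the paper's proof: symmetrization, the contraction principle with the factor $8\tilde K$, and then the fixed-point step via Young's inequality for the pair $(G,H)$ with the same split $8\tilde K/\sqrt n = \tfrac12\cdot 16\tilde K/\sqrt n$. The only (immaterial) difference is that you invoke Jensen through the concavity of $G^{-1}$ applied to $\EE\tilde R_n^2$, whereas the paper uses the equivalent convexity of $G$ applied to $\EE G^{-1}(\tilde R_n^2)$.
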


\begin{proof}
By symmetrization we have (see e.g. \cite{vanderVaart:96}), 
$$ \EE \sup_{g \in {\cal G} (\tilde{R},\tilde{K}) } \biggl | \| g \|_n^2 - \| g\|^2 \biggr | = \EE \sup_{g \in
{\cal G}(\tilde{R},\tilde{K})}
\biggl | \frac{1}{n}\sum_{i=1}^n \left(g_i^2-\EE g_i^2 \right) \biggr | $$
$$\le 2 \EE  \sup_{g \in {\cal G} (\tilde{R},\tilde{K}) } \biggl | {1 \over n} \sum_{i=1}^n g_i^2 \varepsilon_i
\biggr | $$
$$ \le 8 \tilde{K} \EE  \sup_{g \in {\cal G} (\tilde{R},\tilde{K}) } \biggl | {1 \over n} \sum_{i=1}^n g_i
\varepsilon_i
\biggr | ,$$
where in the last inequality we used the contraction inequality of \cite{Ledoux:91} for Lipschitz loss function 
(see Theorem 14.4 of
\cite{BvdG2011}).
But by assumption:
\be \label{classG_Proof.equation_2}
\EE  \sup_{g \in {\cal G} (\tilde{R},\tilde{K}) } \biggl | {1 \over n} \sum_{i=1}^n g_i \varepsilon_i \biggr |
\le
\EE G^{-1} ( \tilde{R}_n^2 ) / \sqrt n .
\ee
Thus we have for $\tilde{u}:= \EE G^{-1} ( \tilde{R}_n^2 ) $,
$$\EE \tilde{R}_n^2 \le \sup_{g \in {\cal G} (\tilde{R},\tilde{K}) } ||g||^2+ \EE \sup_{g \in {\cal G}
(\tilde{R},\tilde{K})}\Big|||g||_n^2-||g||^2\Big|
\le \tilde{R}^2 + 8\tilde{K} \tilde{u}/ \sqrt n . $$
On the other hand, by Jensen's inequality,
$$\EE \tilde{R}_n^2 = \EE G( G^{-1} (\tilde{R}_n^2)) \ge G \biggl ( \EE G^{-1} (\tilde{R}_n^2)\biggr ) = 
G(\tilde{u})
.$$
Let 
$$
H(v):=\sup_u\{ vu-G(u)  \}
$$
be the convex conjugate of $G$, then we have
$$G(\tilde{u}) \le \tilde{R}^2 + 8\tilde{K}\tilde{u}/ \sqrt n \le \tilde{R}^2 + G(\tilde{u})/2 + H ( 16 
\tilde{K} / \sqrt n ) / 2 . $$
and by the assumption
$$ G(\tilde{u}) \le 2 \tilde{R}^2 + H (16 \tilde{K}/\sqrt n ) \le 4 \tilde{R}^2 . $$
In other words, we showed that
$$G  \biggl ( \EE G^{-1} ( \tilde{R}_n^2 ) \biggr ) \le 4 \tilde{R}^2 . $$
$G$ is monotone, so 
\be \label{classG_Proof.equation_1}
\EE G^{-1} ( \tilde{R}_n^2 )  \le G^{-1} ( 4 \tilde{R}^2 ) .
\ee
\end{proof}

\begin{lemma}\label{classG.corollary}
Assume the conditions of Lemma \ref{classG.lemma}.  Then, for all $\tilde{R}^2 \ge H( 16 \tilde{K}/
\sqrt n)/2$, 
$$ \EE \sup_{g \in {\cal } (\tilde{R},\tilde{K}) } \biggl | {1 \over n } \sum_{i=1}^n g_i \epsilon_i  \biggr |
\le
G^{-1} (4\tilde{R}^2 ) / \sqrt n , $$
and for all $\delta > 0$, 
$$ \EE \sup_{g \in {\cal G} (\tilde{R}, \tilde{K})}  \biggl | \| g \|_n^2 - \| g \|^2 \biggr | \le 4 \delta
\tilde{R}^2 + \delta H
\biggl ( { 8 \tilde{K} \over \delta  \sqrt n }  \biggr ) . $$
\end{lemma}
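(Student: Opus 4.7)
The plan is to derive both inequalities as direct consequences of Lemma \ref{classG.lemma}, applied to the class $\mathcal{G}(\tilde{R}, \tilde{K})$ under the same hypotheses.

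For the first inequality, I would simply chain the hypothesis and conclusion of Lemma \ref{classG.lemma}. Specifically, the assumption in that lemma reads
$$ \EE \sup_{g \in \mathcal{G}(\tilde{R},\tilde{K})} \biggl| \frac{1}{n}\sum_{i=1}^n g_i\varepsilon_i \biggr| \le \EE G^{-1}(\tilde{R}_n^2)/\sqrt{n}, $$
and the conclusion of the same lemma gives $\EE G^{-1}(\tilde{R}_n^2) \le G^{-1}(4\tilde{R}^2)$, provided $\tilde{R}^2 \ge H(16\tilde{K}/\sqrt{n})/2$. Combining these two bounds immediately yields the first displayed inequality.

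For the second inequality, I would start from the bound already established in Lemma \ref{classG.lemma}, namely
$$ \EE \sup_{g \in \mathcal{G}(\tilde{R},\tilde{K})} \bigl| \|g\|_n^2 - \|g\|^2 \bigr| \le 8\tilde{K}\, G^{-1}(4\tilde{R}^2)/\sqrt{n}, $$
and then apply Young's inequality in the form $uv \le \delta\, G(u) + \delta\, H(v/\delta)$, which follows from the definition of the convex conjugate $H(v) = \sup_u\{uv - G(u)\}$ by rescaling $u \mapsto \delta u$. Taking $u = G^{-1}(4\tilde{R}^2)$, so that $G(u) = 4\tilde{R}^2$, and $v = 8\tilde{K}/\sqrt{n}$, I would obtain
$$ 8\tilde{K}\, G^{-1}(4\tilde{R}^2)/\sqrt{n} \le 4\delta\tilde{R}^2 + \delta\, H\!\left(\frac{8\tilde{K}}{\delta\sqrt{n}}\right), $$
which is exactly the claimed bound.

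There is really no serious obstacle here; this corollary is just a repackaging of Lemma \ref{classG.lemma} combined with a standard Young's inequality manipulation. The only small care needed is to apply the convex-conjugate inequality with the correct scaling parameter $\delta$ so that the final form matches the statement.
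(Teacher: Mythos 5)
Your proposal is correct and follows essentially the same route as the paper: both arguments reduce the corollary to Lemma \ref{classG.lemma} and then apply the Fenchel--Young inequality $uv\le \delta G(u)+\delta H(v/\delta)$ with the scaling parameter $\delta$. The only (immaterial) difference is that the paper applies Young's inequality to $\tilde u=\EE G^{-1}(\tilde R_n^2)$ and then uses $G(\tilde u)\le 4\tilde R^2$, whereas you first pass to $G^{-1}(4\tilde R^2)$ and then apply Young's inequality; the two orderings give the identical bound.
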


{\bf Proof.}
Along the lines of Lemma \ref{classG.lemma}, starting from \eqref{classG_Proof.equation_2} we get
$$
\EE \sup_{g \in {\cal G} (\tilde{R},\tilde{K}) } \biggl | \| g \|_n^2 - \| g\|^2 \biggr | \le 8\tilde{K} 
\tilde{u}
/\sqrt n .
$$
The right hand side of the above inequality can be further approximate by
$$
8\tilde{K} \tilde{u} /\sqrt n \leq \delta G(\tilde{u}) +\sup_{ u} \left\lbrace \frac{ 8 \tilde{K} u}{ \sqrt n
}-\delta
G( u)\right\rbrace \leq 4 \delta \tilde{R}^2  +\delta H \biggl ( { 8 \tilde{K} \over \delta  \sqrt n } 
\biggr )~.
$$

\hfill $\sqcup \mkern -12mu \sqcap$

\subsection{The set ${\cal T}_1 (\delta_0 , R) $} 

\begin{lemma} \label{empiricalG.lemma} 
Let for a fixed $\beta$
$$R_n^2 := \sup_{g: \ X\beta + g  \in {\cal F}(R) } \| g \|_n^2 . $$
Assume Conditions \ref{eigenvalue.condition} and \ref{entropy.condition}, that $\mu \le R $ and
\eqref{Assumption_EV_Lambda.equation}, then 
$$\EE \sup_{g: \ X\beta + g  \in {\cal F}(R) }   \biggl | {1 \over n} \sum_{i=1}^n g_i \varepsilon_i  \biggr |
\le
\EE G^{-1} ( R_n^2 )/ \sqrt n  , $$
where $\varepsilon_1, \ldots ,\varepsilon_n$ is a Rademacher sequence independent of $\{ z_i \}_{i=1}^n $, and
where
\be \label{DefG.equation}
G (u) :=   \kappaUnoG  (R/\mu)^{-{2  \over 2m-1}} u^{4m \over 2m-1} , \ u > 0 ,
\ee
with $\kappaUnoG$ a constant depending only on $A$ and $m$.
Moreover, if in addition Condition \ref{penalty.condition} holds, we have for a constant
$\kappaUno$
depending only on $A$, $m$, $\Lambda$ (defined in \eqref{Assumption_EV_Lambda.equation}) 
and $\KG$, and for $\mu^2 \ge \kappaUno n^{-{2m \over 2m+1}} $,

$$\EE  \sup_{g: \ X\beta + g  \in {\cal F}(R) }   \biggl | {1 \over n} \sum_{i=1}^n g_i
\varepsilon_i \biggr | 
\le \kappaUno { R^{1 - {1 \over 2m}}  \over \sqrt n }  [ { R / \mu }]^{1 \over 2m}  $$

and for $\mu^2\geq (\kappaUno \vee \deltaUno^{-\frac{4m}{2m+1}} ) n^{-{2m \over 2m+1}}$
$$
\EE  \sup_{g: \ X\beta + g  \in {\cal F}(R) }  \biggl | \| g \|_n^2 - \| g \|^2 \biggr | \le 8\KG 
\kappaUno\deltaUno R^2. 
$$
\end{lemma}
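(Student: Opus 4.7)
The plan is to apply Lemmas \ref{classG.lemma} and \ref{classG.corollary} to the class $\mathcal{G}_\beta := \{g : X\beta + g \in \mathcal{F}(R)\}$ after extracting bounds on $\|g\|$, $J(g)$ and $\|g\|_\infty$. From $\tau_{\mu,R}(X\beta+g)\le R$ we obtain $J(g)\le R/\mu$ and $\|X\beta+g\|\le R$; the orthogonal decomposition in Remark \ref{OrthogonalPorj.remark} yields $\|\tilde X\beta\|\le R$ and $\|h\beta+g\|\le R$, so Condition \ref{eigenvalue.condition} gives $\|\beta\|_2\le R/\Lambda_{\tilde X, {\rm min}}$ and hence $\|g\| \le R + \Lambda_{h, {\rm max}}\|\beta\|_2 \le \Lambda R$. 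Assumption \eqref{Assumption_EV_Lambda.equation} then gives $\Lambda R\le R/\mu$, so Condition \ref{penalty.condition} applied by scaling yields $\|g\|_\infty \le K_g R/\mu$.

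For the first conclusion, Condition \ref{entropy.condition} applied to the rescaled class produces the sup-norm entropy bound $H(u,\mathcal{G}_\beta,\|\cdot\|_\infty)\le A(R/\mu)^{1/m}u^{-1/m}$; since $\|\cdot\|_n\le\|\cdot\|_\infty$ the same bound holds for the empirical entropy, and a standard symmetrisation plus Dudley chaining argument yields, conditional on $Z$,
$$\EE\sup_{g\in\mathcal{G}_\beta}\Bigl|\tfrac{1}{n}\textstyle\sum_i g_i\varepsilon_i\Bigr|\le C\!\int_0^{R_n}\!\!\sqrt{A(R/\mu)^{1/m}u^{-1/m}/n}\,du\le \kappa_0'\,(R/\mu)^{1/(2m)}R_n^{1-1/(2m)}/\sqrt n.$$
Recognising the right-hand side as $G^{-1}(R_n^2)/\sqrt n$ for $G$ as stated and taking expectation over $Z$ gives the first claim.

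For the second and third claims I take $\tilde R=\Lambda R$ and $\tilde K=K_g R/\mu$. A direct computation of the convex conjugate of $G$ yields $H(v)\asymp (R/\mu)^{2/(2m+1)}v^{4m/(2m+1)}$, and the threshold condition $\tilde R^2\ge H(16\tilde K/\sqrt n)/2$ then reduces to $\mu^2\ge \kappa_0 n^{-2m/(2m+1)}$ for a suitable $\kappa_0=\kappa_0(A,m,\Lambda,K_g)$. Lemma \ref{classG.lemma} gives $\EE G^{-1}(R_n^2)\le G^{-1}(4\tilde R^2)\asymp (R/\mu)^{1/(2m)}R^{1-1/(2m)}$ (after absorbing $\Lambda^{(2m-1)/(2m)}$ into $\kappa_0$), producing the second claim. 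For the third claim I apply Lemma \ref{classG.corollary} with $\delta=\delta_1$: the first term $4\delta_1\tilde R^2=4\delta_1\Lambda^2 R^2$ is of the required order, while the second term satisfies $\delta_1 H(8\tilde K/(\delta_1\sqrt n))\asymp K_g^{4m/(2m+1)}(R/\mu)^2\delta_1^{(1-2m)/(2m+1)}/n^{2m/(2m+1)}$, and is bounded by $K_g\kappa_0\delta_1 R^2$ precisely when $\mu^2\ge \delta_1^{-4m/(2m+1)}n^{-2m/(2m+1)}$, which is the strengthened assumption.

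The main obstacle I anticipate is the reduction from the data-dependent constraint $\|X\beta+g\|\le R$ to a clean $L_2$-bound on $\|g\|$, which requires both the orthogonal decomposition of Remark \ref{OrthogonalPorj.remark} and assumption \eqref{Assumption_EV_Lambda.equation}. The second delicate point is the convex-conjugate bookkeeping needed to translate the thresholds of Lemmas \ref{classG.lemma}--\ref{classG.corollary} into the nonparametric rate $\mu^2\asymp n^{-2m/(2m+1)}$, with explicit constants depending only on $A$, $m$, $\Lambda$ and $K_g$.
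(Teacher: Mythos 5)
Your proposal is correct and follows essentially the same route as the paper: the reduction via Remark \ref{OrthogonalPorj.remark} and Condition \ref{eigenvalue.condition} to $\|g\|\le\Lambda R$, $J(g)\le R/\mu$ and $\|g\|_\infty\le \KG R/\mu$, the rescaled entropy bound plus Dudley's inequality to identify $G^{-1}$, and then Lemma \ref{classG.lemma} with $\tilde R=\Lambda R$, $\tilde K=\KG R/\mu$ and the conjugate $H$. The only (immaterial) deviation is that for the third bound you invoke Lemma \ref{classG.corollary} with $\delta=\deltaUno$, which yields an extra additive term $4\deltaUno\Lambda^2R^2$, whereas the paper bounds $8\tilde K\,\EE G^{-1}(\tilde R_n^2)/\sqrt n$ directly to land exactly on $8\KG\kappaUno\deltaUno R^2$; this affects only the unoptimised constants.
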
 

Suitable choices for $\deltaUno$, $\kappaUno$ and $\kappaUnoG$ are
\be \label{deltaUno.equation}
\deltaUno\leq \left( 8\CC \sqrt{A} \KG \right)^{-1} \left[ \frac{2m+1}{2\Lambda^2 (2m-1)} 
\right]^{-\frac{2m+1}{4m}}
\ee
\be \label{kappaUno.equation}
\kappaUno := \CC\sqrt{A}\frac{2m}{2m-1} \left( 2\Lambda \right)^{\frac{2m-1}{2m+1}}
\ee
\be \label{kappaUnoG.equation}
\kappaUnoG:=\left( \CC\sqrt{A}\frac{2m}{2m-1}\right)^{-\frac{4m}{2m-1}}
\ee
and results from \eqref{kappaUno1.equation}, \eqref{kappaUno2.equation} and \eqref{KappaUnoGCalcolo.equation} 
respectively. Here, $\CC$ is an appropriate universal constant (coming from Dudley's inequality).

Note that the proposed function $G$ is strictly convex, positive and increasing as required in Lemma
\ref{classG.lemma} and that the assumption $\mu\leq R$ is less restrictive than
Assumption \eqref{condition_mu_Thm.equation}, as required in Theorem \ref{1.theorem}.

\begin{proof}
If $X \beta + g \in {\cal F} (R) $, we have by Remark \ref{OrthogonalPorj.remark}
$\| h\beta +g \|^2  \le R^2 $
as well as
$\| \tilde X \beta \|^2 \le R^2 $.
Hence,
$\| \beta \|_2^2 \le {R^2 / \Lambda_{\tilde X , {\rm min}}^2 } $,
so that
$\| h \beta \|^2 \le R^2 { \Lambda_{h, {\rm max}}^2 / \Lambda_{\tilde X , {\rm min}}^2 } $.
Thus
$$\| g\| \le \| h \beta + g \| + \| h \beta \| \le \left (   1+  { \Lambda_{h, {\rm max}} \over \Lambda_{\tilde
X , {\rm min}} } \right ) R := \Lambda R~.$$
and
$$ \left \| { \mu g \over R } \right \|^2 \le \mu^2 \Lambda^2  \leq 1. $$
Furthermore, $\tau(f)\leq R$ implies
$$J^2 (g) \le R^2 / \mu^2. $$
Hence
$$J \left ( \frac{\mu g}{R} \right )  \le 1 , $$

By Condition \ref{entropy.condition} we have
\bes
&&{\cal H} ( u, \{  g : \ X \beta + g \in {\cal F} (R)\}  , \|  \cdot \|_{\infty}  ) \\
&\le& {\cal H}\left(u,\{ g:J^2(g\mu/R)\leq 1;||g\mu/R||^2\leq 1\} , \| \cdot \|_{\infty}\right) \\
&=&{\cal H}\left( \frac{u\mu}{R},g:J^2(g)\leq 1;||g||^2\leq 1, \| \cdot \|_{\infty}\right) \\
&\leq& A \biggl ({ R  \over u \mu  } \biggr ) ^{1 \over m} \quad,  \quad u >0 .
\ees

By Dudley's inequality (see e.g. \cite{vanderVaart:96}) we get for a universal constant $\CC$
\be
&&\EE \sup_{g: \ X\beta + g  \in {\cal F}(R) }   \biggl | {1 \over n} \sum_{i=1}^n g_i \varepsilon_i  \biggr |
\nonumber \\
&
{\leq}& \EE\frac{\CC}{\sqrt{n}} \int_0^{R_n} \left( {\cal H} ( u, \{  g 
: \ X \beta + g \in {\cal F} (R)\}  , \| 
\cdot \|_{\infty} ) \right)^{\frac{1}{2}} du \nonumber \\
&\leq& \EE\frac{\CC}{\sqrt{n}} \int_0^{R_n} \sqrt{A} \left(\frac{R}{u\mu}\right )^{\frac{1}{2m}}du \nonumber \\
&=&\EE \CC\sqrt{A}\frac{2m}{2m-1} \left(\frac{R_n^{\frac{2m-1}{2m}}}{\sqrt{n}}\right)\left(\frac{R}{\mu}
\right)^{\frac{1}{2m}} \label{KappaUnoGCalcolo.equation}\\
&=&\EE G^{-1} ( R_n^2 )/ \sqrt n \nonumber
\ee
Where 
$$
G^{-1} ( R_n^2):= \CC\sqrt{A}\frac{2m}{2m-1} R_n^{\frac{2m-1}{2m}}[R/\mu]^{\frac{1}{2m}}.
$$
One can now
easily see that $G^{-1}$ is the inverse of $G$ defined in \eqref{DefG.equation}.\\
Let now $\mathcal{G}(\tilde R,\tilde K):=\{g:X\beta+g\in\mathcal{F}(R), \|g\|\leq\tilde R,
\|g\|_\infty\leq\tilde K\}$.\\
Note that by Condition \ref{penalty.condition}, for $X\beta + g \in {\cal F} (R) $, 
$$ \| g \|_{\infty} \le \KG R / \mu ~.$$
Take now $\tilde R:=\Lambda R$ and $\tilde K:=\KG R/\mu$, then the conditions of 
Lemma \ref{classG.lemma} are fulfilled. After some easy computations we furthermore get


\be \label{def_H(u)_casoConcreto.eqation}
H(v)=\kappaUnoH \big(v\big)^{\frac{4m}{2m+1}}[R/\mu]^{\frac{2}{2m+1}}
\ee
where
\be \label{kappaUnoH}
\kappaUnoH :=\left[ \CC \sqrt{A}\frac{2m}{2m-1}\right]^{\frac{4m}{2m+1}}\cdot
\frac{(2m+1)(2m-1)^{\frac{2m-1}{2m+1}}}{(4m)^{\frac{4m}{2m+1}}}
\ee
and
$$
H\left( \frac{16\tilde K}{\sqrt{n}}\right)=\left[8\CC \sqrt{A}\KG\right]^{\frac{4m}{2m+1}} 
\frac{2m+1}{2m-1}n^{-\frac{2m}{2m-1}} [R/\mu]^2
$$
\be \label{kappaUno1.equation}
&\leq& \left[8\CC \sqrt{A}\KG\right]^{\frac{4m}{2m+1}} \frac{2m+1}{2m-1} \deltaUno^{\frac{4m}{2m+1}} R^2\\
&\leq& 2\Lambda^2 R^2 \nonumber
\ee


Consequently from Lemma \ref{classG.lemma} we have
\bes
\EE G^{-1} ( R_n^2 )/ \sqrt n &\leq&G^{-1} \left( 4 \Lambda^2 R^2 \right)/ \sqrt n \nonumber \\
&=&\CC\sqrt{A}\frac{2m}{2m-1} \left( 2\Lambda \right)^{\frac{2m-1}{2m+1}}  { R^{1 - {1 \over 2m}}  \over 
\sqrt n }  [ { R / \mu }]^{1 \over 2m} .
\ees
Resuming we showed
\be
\EE \sup_{g: \ X\beta + g  \in {\cal F}(R) }   \biggl | {1 \over n} \sum_{i=1}^n g_i \varepsilon_i  \biggr 
|\leq 
\CC\sqrt{A}\frac{2m}{2m-1} \left( 2\Lambda \right)^{\frac{2m-1}{2m+1}}  { R^{1 - {1 \over 2m}}  \over 
\sqrt n }  [ { R / \mu }]^{1 \over 2m} \label{kappaUno2.equation}
\ee
$$
\leq \kappaUno { R^{1 - {1 \over 2m}}  \over \sqrt n }  [ { R / \mu }]^{1 \over 2m}. 
$$
As in the proof of Lemma \ref{classG.lemma} and Lemma \ref{classG.corollary}
$$
\EE  \sup_{g: \ X\beta + g  \in {\cal F}(R) }  \biggl | \| g \|_n^2 - \| g \|^2 \biggr | \le 8 \tilde K \EE 
G^{-1} ( \tilde{R}_n^2 ) /\sqrt n .
$$
The right hand side of the inequality can be further approximated by
\bes
8K \EE G^{-1} ( \tilde{R}_n^2 ) /\sqrt n &\leq& \frac{8\KG R}{\mu\sqrt{n}}\EE G^{-1} ( \tilde{R}_n^2 )\\
&\leq&\frac{8\KG R}{\mu\sqrt{n}}\EE G^{-1} \left( 4\Lambda R^2 \right)\\
&\leq& \frac{8\KG R}{\mu}\CC\sqrt{A}\frac{2m}{2m-1} \left( 2\Lambda \right)^{\frac{2m-1}{2m+1}}  { R^{1 
- {1 \over 2m}}  \over \sqrt n }  [ { R / \mu }]^{1 \over 2m}\\
&\leq& 8\KG \kappaUno R^2 \mu^{-\frac{2m+1}{2m}} n^{-\frac{1}{2}}\\
&\leq& 8\KG \kappaUno \deltaUno R^2 .
\ees
This ends the proof.
\end{proof}

In the following lemma we finally show that the set $\mathcal{T}_1$ has large probability.
First we present a remark that will be useful for the proof.

\begin{remark} \label{ERn=cER.remark}
It holds that
\bes
\EE R_n \leq \big(\EE (R_n^2) \big)^{\frac{1}{2}} \leq \sqrt{ \EE  \sup_{g: \ X\beta + g  \in {\cal F}(R) }  
\biggl | \| g \|_n^2 - \| g \|^2 \biggr |+\|g\|^2} \\
\leq \sqrt{8\KG \kappaUno\deltaUno R^2 + \Lambda^2 R^2} = R\sqrt{8\KG \kappaUno\deltaUno + \Lambda^2 }
\ees
with $\Lambda= 1+ \Lambda_{\tilde X, {\rm min}} / \Lambda_{h, {\rm max}}$.
Here we used Jensen inequality for the first inequality and the proof of Lemma \ref{empiricalG.lemma} for the 
second one step.
\end{remark}

  \begin{lemma} \label{T1.lemma}
  
    Assume Conditions 
  \ref{design.condition}, \ref{eigenvalue.condition}, \ref{entropy.condition} and \ref{penalty.condition}, that
  $  \mu\le R $, \eqref{Assumption_EV_Lambda.equation} and that $R^2 \le  \lambda \le 1$.\\
  Then for constants $\deltaUnoP$ and $\kappaUnoP$ depending only
  on $\KX$,  $\KG$, $A$, $m$, $\Lambda$ (defined in \eqref{Assumption_EV_Lambda.equation})
, for $\deltaUnoP \lambda \ge \lambda_0   $, with $\lambda_0 = \sqrt {2\log (2p) / n} $,
    and
  $$\mu^2 \ge \kappaUnoP  n^{-{2m \over 2m+1} } , $$
we have
$$
\sup_{f \in {\cal F} (  R  ) } \biggl |\| f \|_n^2 - \| f \|^2   \biggr | \le \delta_0 R^2
$$
 with probability at least $1- \exp[-n (\deltaUnoP)^2 \mu^2  ]$.
 \end{lemma}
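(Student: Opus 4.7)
The plan is to control the centered empirical process $\sup_{f\in\mathcal{F}(R)}\bigl|\|f\|_n^2-\|f\|^2\bigr|$ by decomposing $f=X\beta+g$ and splitting the square into
$$
\|f\|_n^2-\|f\|^2 \;=\; \bigl(\|X\beta\|_n^2-\|X\beta\|^2\bigr) \;+\; 2\bigl( (X\beta)^T g/n - \EE[x\beta\, g(z)]\bigr) \;+\; \bigl(\|g\|_n^2-\|g\|^2\bigr).
$$
The constraint $\tau_{\mu,R}(f)\le R$ together with $R^2\le\lambda\le 1$ and $\mu\le R$ gives the a priori bounds $\|\beta\|_1\le R^2\sqrt{\delta_0/2}/\lambda$, $J(g)\le R/\mu$, $\|g\|\le\Lambda R$ and, by Condition \ref{penalty.condition}, $\|g\|_\infty\le \KG R/\mu$, as already derived in the proof of Lemma \ref{empiricalG.lemma}. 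I bound each of the three suprema separately, first in expectation, then in probability via a concentration inequality, and conclude with a union bound after allocating $\delta_0R^2/3$ and a tail probability $\tfrac13\exp[-n(\deltaUnoP)^2\mu^2]$ to each.

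The purely non-parametric piece is already handled: Lemma \ref{empiricalG.lemma} yields $\EE\sup_g|\|g\|_n^2-\|g\|^2|\le 8\KG\kappaUno\deltaUno R^2$, so choosing $\deltaUno$ small enough puts it below $\delta_0 R^2/6$ in expectation, and Bousquet's inequality applied to this class of functions (uniformly bounded by $\KG R/\mu$, with variance of order $R^2$) converts this into a high-probability bound with the exponent $n(\deltaUnoP)^2\mu^2$. For the linear piece, H\"older plus Condition \ref{design.condition} give
$$
\sup_\beta\bigl|\|X\beta\|_n^2-\|X\beta\|^2\bigr|\le \|\beta\|_1^2\,\max_{j,k}\bigl|(X^TX/n-\EE xx^T)_{jk}\bigr|\le \frac{\delta_0 R^4}{2\lambda^2}\,\max_{j,k}\bigl|(X^TX/n-\EE xx^T)_{jk}\bigr|,
$$
and a Hoeffding bound with a union bound over $p^2$ entries controls the entrywise max by a multiple of $\KX^2\sqrt{\log(2p)/n}$; combining this with Assumption \eqref{Assumption_lambda0.equation} and $R^2\le\lambda$ turns the whole piece into $O(\KX^2\deltaUnoP\delta_0 R^2)$, again at most $\delta_0 R^2/6$ for $\deltaUnoP$ small.

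The cross term I reduce to $2\|\beta\|_1\cdot\max_j\sup_g\bigl|\tfrac1n\sum_i X_{ij}g(z_i)-\EE[x_j g(z)]\bigr|$. For each coordinate $j$, the $g$-supremum is an empirical process indexed by the same class of $g$'s as in Lemma \ref{empiricalG.lemma}, with envelope bounded by $\KX\KG R/\mu$, so the same Dudley chaining argument (together with Condition \ref{entropy.condition}) bounds its expectation. The union bound over $j=1,\dots,p$ costs only a $\sqrt{2\log(2p)}$ factor, which is absorbed by the condition $\deltaUnoP\lambda\ge\sqrt{2\log(2p)/n}$; once again Bousquet's inequality converts expectation into a tail bound of the desired form. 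Since $\|\beta\|_1\le R^2\sqrt{\delta_0/2}/\lambda$ and $\lambda\ge R^2$, the prefactor is of the right order to give a total contribution of $\delta_0 R^2/6$.

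The main obstacle is the bookkeeping for the constants: one must choose $\deltaUnoP$ and $\kappaUnoP$ (in terms of $\delta_0$, $\KX$, $\KG$, $\Lambda$, $A$ and $m$) so that each of the three expected suprema is at most $\delta_0 R^2/6$ simultaneously, and that each of the three Bousquet tails takes the form $\tfrac13\exp[-n(\deltaUnoP)^2\mu^2]$. The exponent $n\mu^2$ rather than $n$ in the tail is precisely what Bousquet's inequality produces once the sup-norm envelope of $\mathcal{F}(R)$ is of order $R/\mu$: the variance-to-envelope-squared ratio picks up the scaling $\mu^2$. Union-bounding the three events and adding the three halves of the expected bounds yields $\sup|\|f\|_n^2-\|f\|^2|\le \delta_0 R^2$ with probability at least $1-\exp[-n(\deltaUnoP)^2\mu^2]$, as claimed.
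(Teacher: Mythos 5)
Your plan follows essentially the same route as the paper's proof: the same decomposition of $\|f\|_n^2-\|f\|^2$ into the $(X\beta)^2$, cross, and $g^2$ pieces, the same a priori bounds $\|\beta\|_1\le R^2\sqrt{\delta_0/2}/\lambda$, $\|g\|\le\Lambda R$, $\|g\|_\infty\le \KG R/\mu$, entropy/Dudley control of the $g$-indexed processes exactly as in Lemma \ref{empiricalG.lemma}, and a Talagrand-type concentration step whose envelope of order $R/\mu$ is what produces the exponent $n\mu^2$. All of that is sound and matches the paper's argument.

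The one step that does not deliver the claimed tail is your treatment of the linear piece. Controlling $\max_{j,k}\bigl|(X^TX/n-\EE xx^T)_{jk}\bigr|$ by Hoeffding plus a union bound over $p^2$ entries at deviation level $\KX^2\sqrt{\log(2p)/n}$ gives a failure probability that is only polynomial in $1/p$, whereas the lemma asserts failure probability $\exp[-n(\deltaUnoP)^2\mu^2]$; since $n\mu^2\ge \kappaUnoP n^{1/(2m+1)}$ can be far larger than $\log p$, a $p^{-c}$ tail is not sufficient. If you instead enlarge the Hoeffding deviation to roughly $\KX^2(\lambda_0+c\,\deltaUnoP\mu)$ so that the union bound closes at the level $\exp[-n(\deltaUnoP)^2\mu^2]$, the extra term contributes $\|\beta\|_1^2\,\KX^2\deltaUnoP\mu\le(\delta_0/2)\KX^2\deltaUnoP\mu$, which by $\mu\le R$ is only $O(\delta_0 R)$ rather than $O(\delta_0 R^2)$, so the $\delta_0R^2/6$ budget cannot be met with $\deltaUnoP$ independent of $R$. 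The paper avoids this by using the finite maximum only inside an \emph{expectation} bound (after symmetrization and contraction, $\EE\|\varepsilon^TX\|_\infty/n\le\lambda_0\KX$ via a maximal inequality) and then applying a single Massart concentration inequality to the whole class $\{f^2:f\in{\cal F}(R)\}$, whose sup-norm envelope of order $R^2/\mu^2$ and $L_2$-norm of order $R^2/\mu$ give, at $t=n(\deltaUnoS)^2\mu^2$, deviation terms of order $R^2$. Your piecewise strategy can be repaired in the same spirit by applying Bousquet's inequality to the function class $\{(X\beta)^2\}$ itself (envelope $\KX^2\delta_0/2$, $L_2$-norm at most $\KX\sqrt{\delta_0/2}\,R$), for which $t=n(\deltaUnoP)^2\mu^2$ yields deviations $O(R\mu+\mu^2)=O(R^2)$; the Hoeffding-on-Gram-entries shortcut as written does not close.
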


A suitable choice for $\kappaUnoP$ is
\be \label{kappaUnoP.equation}
\kappaUnoP:=\frac{\CC \sqrt{8\KG \kappaUno\deltaUno + \Lambda^2 }}{\sqrt{2}} + \kappaUno \KX^{\frac{1}{2m}} 
\sqrt{\delta_0/2}, 
\ee
where $\Lambda= 1+ \Lambda_{\tilde X, {\rm min}} / \Lambda_{h, {\rm max}}$, 
while  $\deltaUnoP$ can be chosen along the lines of Inequality \eqref{TuttiDelta.equation} or
$$
\deltaUnoP:=\frac{1}{3\delta_0}\left( 8 \delta_0 \KX^2+ 8\kappaUnoP \sqrt {\delta_0/2} \vee 
16 \kappaUno \KG \vee\kappaTre /\sqrt{2}\right) . 
$$

\begin{proof}
Let $\{ \varepsilon_i \}_{i=1}^n $ be a Rademacher sequence independent of $\{ ( x_i , z_i ) \}_{i=1}^n $. 
By symmetrization (see e.g. \cite{vanderVaart:96}), we have
$$\EE \sup_{f \in {\cal F} ( R) } \biggl | \| f \|_n^2 - \| f \|^2 \biggr | \le
2 \EE \biggl ( \sup_{f \in {\cal F} ( R)  } \biggl | 
{1 \over n} \sum_{i=1}^n f_i^2 \varepsilon_i \biggr | \biggr ) . $$

Note now that for $X \beta + g \in {\cal F} (R) $,
by Condition \ref{design.condition}
$$ \| X \beta \|_{\infty} \le \KX \| \beta \|_1 \le  \KX {R^2 \over \sqrt {2/\delta_0} \lambda } \le \KX 
\sqrt{\frac{\delta_0 }{2}}, $$ 
where we used the assumption that $R^2 / \lambda \le 1 $. 
By the contraction inequality (see \cite{Ledoux:91}), we have
$$\EE  \sup_{\beta: \ X \beta + g  \in {\cal F} ( R) } \biggl | {1 \over n} \sum_{i=1}^n ( X \beta )_i^2
\varepsilon_i \biggr | \leq
{4 \KX \over \sqrt {2/\delta_0 } }~ \EE  \sup_{\beta: \ X \beta + g  \in {\cal F} ( R) } \biggl | {1 \over n}
\sum_{i=1}^n ( X \beta )_i \varepsilon_i \biggr | .$$
But
\bes
\EE \sup_{\beta: \ X \beta + g  \in {\cal F} ( R) } \biggl | {1 \over n} \sum_{i=1}^n ( X \beta )_i
\varepsilon_i \biggr | &\leq& \EE \sup_{\beta: \ X \beta + g  \in {\cal F} ( R) }
\frac{1}{n}||\varepsilon^TX||_\infty ||\beta||_1 \\
&\leq& {R^2  \over \sqrt {2/ \delta_0} \lambda} \EE \| \varepsilon^T X \|_{\infty} /n  \\
&\leq&  \frac{ \lambda_0}{\lambda}  \KX \sqrt {\delta_0/2}   R^2\\
&\leq& \biggl ( \deltaUnoP \KX \sqrt {\delta_0/2}  \biggr ) R^2  ,
\ees
where we invoked the Cauchy-Schwarz inequality for the first step, Lemma 14.14 in \cite{BvdG2011} for the
second, the approximation $\EE \| \varepsilon^T X \|_{\infty} /n \le \lambda_0 \KX$ in the third one and the
assumption $\lambda \ge \lambda_0 /\deltaUnoP $ in the last inequality. 
Thus
$$\EE  \sup_{\beta: \ X \beta + g  \in {\cal F} ( R) } \biggl | {1 \over n} \sum_{i=1}^n ( X \beta )_i^2
\varepsilon_i \biggr | \le
4 \biggl (  { \deltaUnoP \KX^2 \over{2/\delta_0}}  \biggr ) R^2 = 2 \delta_0 \KX^2 \deltaUnoP R^2 .  $$

As in the proof of Lemma \ref{empiricalG.lemma}, we note that for $X \beta + g \in {\cal F} (R)$
$$ \left \| { g \over R/\mu } \right \|^2 \leq \Lambda^2\mu^2 \leq 1  $$
and
$$J \left ( { \mu g \over R} \right )  \le 1 . $$
Therefore, by Condition \ref{penalty.condition},
$$\| g \|_{\infty} \le  \KG R/\mu . $$
Again by the contraction inequality,
$$
 \EE \sup_{g: \ X \beta + g  \in {\cal F} ( R) } \biggl | {1 \over n} \sum_{i=1}^n g^2_i \varepsilon_i \biggr |
\le
{ 4 \KG  R \over \mu} \EE \sup_{g: \ X \beta + g  \in {\cal F} ( R) } \biggl | {1 \over n} \sum_{i=1}^n g_i
\varepsilon_i \biggr | . $$
From Lemma \ref{empiricalG.lemma}, for $\mu^2 \ge \kappaUno n^{-{2m \over 2m+1}}$, 
$$ \EE\sup_{g: \ X \beta + g  \in {\cal F} ( R) } \biggl | {1 \over n} \sum_{i=1}^n g_i \varepsilon_i \biggr |
\le 
\kappaUno { R  \over \sqrt n \mu^{1 \over 2m}  }   . $$
It follows that
$$ \EE \sup_{g: \ X \beta + g  \in {\cal F} ( R) } \biggl | {1 \over n} \sum_{i=1}^n g^2_i \varepsilon_i \biggr
| \le
\kappaUno { 4 \KG  R \over \mu}  { R  \over \sqrt n \mu^{1 \over 2m}  }$$
$$ = 4 \kappaUno \KG { R^2 \over \sqrt n \mu^{2m+1 \over 2m } } . $$

We also have
$$
\EE \sup_{\beta , g: \ X \beta + g  \in {\cal F} ( R) } \biggl | {1 \over n} \sum_{i=1}^n (X \beta)_i g_i
\varepsilon_i \biggr | \leq \EE \sup_{\beta , g: \ X \beta + g  \in {\cal F} ( R) }
||\beta||_1 \left\| \sum_{i=1}^{n}x_ig_i\epsilon_i \right\|_\infty 
$$
$$
\leq \frac{R^2}{\lambda} \sqrt {\delta_0/2 } \ \EE\max_{1 \le j \le p }  \sup_{   g: \ X \beta + g  \in {\cal F}
( R)
} \biggl | {1 \over n} \sum_{i=1}^n X_{ij} g_i \varepsilon_i \biggr | .
$$
Similarly as in the proof of Lemma \ref{empiricalG.lemma} we have
$$
\mathcal{H}(u,\{gX_j,g\in \mathcal{G},j=1,\ldots,p\},||.||_\infty)\leq \log \sum_{j=1}^p
N(u,gX_j,g\in \mathcal{G}\},||.||_\infty)
$$
$$
\leq \log p + \max_{j=1,...,p} \log N(u,gX_j,g\in \mathcal{G}\},||.||_\infty)
$$
$$
=\log p + \max_{j=1,...,p} \mathcal{H}(u,gX_j,g\in \mathcal{G}\},||.||_\infty)
\leq \log p + A \biggl (\frac{R\KX}{u\mu}\biggr )^{\frac{1}{m}},
$$
where in the last step we used that $||gX_j\mu/(R\KX)||\leq1$ and conditions \ref{design.condition} and
\ref{entropy.condition}.\\
In view of previous results, with help of Lemma \ref{empiricalG.lemma} we have that for $\mu^2 \ge 
\kappaUno n^{-{2 m \over 2m+1 }}$, by Dudley's inequality
$$
\EE\max_{1 \le j \le p }  \sup_{  g: \ X \beta + g  \in {\cal F} ( R) } \biggl | {1 \over n} \sum_{i=1}^n
X_{ij} g_i \varepsilon_i \biggr | $$
$$
{\le} ~\EE \frac{\CC}{\sqrt{n}}\int_0^{R_n}\big(\mathcal{H}(u,\{gX_j,g\in
\mathcal{G},j=1,\ldots,p\},||.||_\infty)\big)^{\frac{1}{2}} du
$$
$$
\leq \frac{\CC}{\sqrt{n}} \EE \sqrt{\log p}R_n  + \frac{\KX^{\frac{1}{2m}}}{\sqrt{n}} \EE G^{-1} (R_n^2)
$$
$$
\leq 
\frac{\CC \sqrt{8\KG \kappaUno\deltaUno + \Lambda^2 }}{\sqrt{2}} \lambda_0 R+\KX^{\frac{1}{2m}} 
\kappaUno \frac{R}{\sqrt{n}\mu^{\frac{1}{2m}}}
$$
In the last steps we used Remark \ref{ERn=cER.remark}, Lemma \ref{empiricalG.lemma} and the definition of
$\lambda_0$.

Hence for, $\mu^2 \ge (\kappaUno \vee \kappaUnoP \vee (\deltaUnoP){\frac{-4m}{2m+1}} ) n^{-{2m \over
2m+1}}$, $\lambda\leq\lambda_0/\deltaUnoP$ and $\mu\leq R$
$$ \EE \sup_{\beta , g: \ X \beta + g  \in {\cal F} ( R) } \biggl | {1 \over n} \sum_{i=1}^n (X \beta)_i g_i
\varepsilon_i \biggr | 
$$
$$
\le {R^2 \over \sqrt {2/\delta_0 } \lambda } \left[  \frac{\CC \sqrt{8\KG 
\kappaUno\deltaUno  
+ \Lambda^2 }}{\sqrt{2}} \lambda_0 R+\KX^{\frac{1}{2m}} 
\kappaUno \frac{R}{\sqrt{n}\mu^{\frac{1}{2m}}}\right] 
$$
$$
\leq \sqrt{\frac{\delta_0}{2}}R^2 \deltaUnoP \left( \frac{\CC \sqrt{8\KG \kappaUno\deltaUno + \Lambda^2 
}}{\sqrt{2}} + \kappaUno \KX^{\frac{1}{2m}} \sqrt{\delta_0/2} \right) 
$$
$$
= \kappaUnoP \sqrt{\frac{\delta_0}{2}}R^2 \deltaUnoP
$$
Resuming we have:
$$
\EE \sup_{\beta , g: \ X \beta + g  \in {\cal F} ( R) } \biggl | {1 \over n} \sum_{i=1}^n (X \beta)_i g_i
\varepsilon_i \biggr | \leq 2\kappaUnoP R^2  \sqrt {\delta_0 /2}\deltaUnoP~.
$$
Then, by symmetrization (Corollary 3.4 in \cite{vandeGeer:00}, \cite{vanderVaart:96}),
$$ \EE \sup_{f \in {\cal F} (R) } \biggl | \| f \|_n^2 - \| f \|^2 \biggr | \leq
2~\EE \sup_{f\in\mathcal{F}(R)}\frac{1}{n}\left|\sum_{i=1}^n f_i^2\varepsilon_i\right|$$
$$
=2~\EE \sup_{f\in\mathcal{F}(R)} \left|\sum_{i=1}^n \left( (X_\beta)_i^2\varepsilon_i +2 (X_\beta)_i
g_i\varepsilon_i+ g_i^2\varepsilon_i \right) \right|
$$
\be \label{Def_Expect_for_Massart.equation}
\leq 4 \delta_0 \KX^2 \deltaUnoP R^2  + 8 \kappaUno \KG \deltaUno { R^2  } + 8\kappaUnoP \sqrt {\delta_0/2} 
\deltaUnoP R^2 :=
{\bf E} .
\ee

We moreover have for any $f= X \beta + g $ in ${\cal F} (R) $, and for $\kappaTre:=\KX\delta_0/2+\KG$, 
$\| f \|_{\infty}\leq \|X\beta\|_\infty+\|g\|_\infty\leq \KX \|\beta\|_1+\|g\|_\infty \le \kappaTre R/ \mu$ and 
$\| f \| \le  R   $. Hence, $\| f^2 \| \le \| f \|_{\infty} \| f \| \le \kappaTre  R^2 / \mu$.

Therefore, for all $t>0$, by Massart's inequality \cite{Massart:00a}, (see also Corollary 14.2 in
\cite{BvdG2011}), using the approximation \\$\kappaTre R^2/\mu (2t)^{1/2}\cdot (1+2{\bf E}/R^2)^{1/2}\leq
\textbf{E} +
t\kappaTre^2R^2/\mu^2 + (2t)^{1/2} R^2 \kappaTre /\mu$,
$$ \PP \biggl ( \sup_{f \in {\cal F} (R)   } \biggl |\| f \|_n^2 - \| f \|^2   \biggr |  \ge 
2  {\bf E}    +
\kappaTre { R^2\over \mu}   \sqrt {2t \over  n} +  \frac{4}{3}  \kappaTre^2 { R^2 \over \mu^2 }   { t \over n} 
\biggr )$$ $$ \le \exp[-t] . $$
We now take
$$t:= n (\deltaUnoS)^2 \mu^2 $$
to find
$$ \PP \biggl ( \sup_{f \in {\cal F} (R)   } \biggl |\| f \|_n^2 - \| f \|^2   \biggr |  \ge 
2  {\bf E}    +
  \deltaUnoS \kappaTre R^2 \sqrt {2} + \frac{4}{3}\kappaTre^2  (\deltaUnoS)^2 R^2  \biggr )$$ $$ \le \exp\biggl 
[-n (\deltaUnoS)^2
\mu^2 
  \biggr ] . $$
  
Recall that \textbf{E} is defined in \eqref{Def_Expect_for_Massart.equation}. Take now $\deltaUno$, 
$\deltaUnoP$, $\deltaUnoS$ small enough, that
\be \label{TuttiDelta.equation} 
  8 \delta_0 \KX^2 \deltaUnoP   + 16 \kappaUno \KG \deltaUno  + 
16\kappaUnoP \sqrt {\delta_0/2} \deltaUnoP +
  \deltaUnoS \kappaTre   \sqrt {8} +  \frac{8}{3}  \kappaTre^2  (\deltaUnoS) ^2  \le \delta_0    .
\ee
So we have:
$$
\PP \biggl ( \sup_{f \in {\cal F} (R)   } \biggl |\| f \|_n^2 - \| f \|^2   \biggr |  \geq
\delta_0 R^2\biggr )\leq \exp\biggl [-n (\deltaUnoS)^2 \mu^2   \biggr ] .
$$

A possible choice for $\deltaUno$, $\deltaUnoP$, $\deltaUnoS$ respecting \eqref{deltaUno.equation}, is 
\be \label{deltaUnoP}
\deltaUnoP:=\deltaUnoS:=\frac{\delta_0}{3}\left( (8 \delta_0 \KX^2+ 16\kappaUnoP \sqrt {\delta_0/2} \vee 
16 \kappaUno \KG )\vee(4.6 \kappaTre )\right)^{-1}~,
\ee
and
\be \label{deltaUnoSecondaDef}
\deltaUno:=\min\{\deltaUnoP,\eqref{deltaUno.equation}\}~.
\ee
\end{proof}

\subsection{The set ${\cal T}_2( \delta_0 , R )$}

\begin{lemma}\label{T2.lemma} 
   Assume Conditions \ref{Gaussian.condition}, 
  \ref{design.condition}, \ref{eigenvalue.condition}, \ref{entropy.condition} and \ref{penalty.condition},
  that $ \mu^2 \le R^2$, \eqref{Assumption_EV_Lambda.equation} and $R^2 \le  \lambda \le 1$.\\
  Then for constant $\deltaDUE$ depending only on $\KX$,  $\KG$, $A$, $m$, for $\deltaDUE \lambda \ge \lambda_0 
R/\mu $, $\lambda_0 = \sqrt {2\log (2p )/ n} $ and $\mu^2 \ge \kappaUno \vee \kappaUnoP n^{-{2m \over 2m+1} }$, 
we have
$$
\sup_{f \in {\cal F} (  R  ) } \biggl | E^T f / n    \biggr | \le \delta_0 R^2~,
$$
with probability at least $1- 2 \exp[-n \deltaDUE^2 \mu^2  ]$
 \end{lemma}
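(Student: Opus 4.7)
The plan is to follow the same decomposition strategy as in the proof of Lemma \ref{T1.lemma}: write $f = X\beta + g$ for $f \in \mathcal{F}(R)$, bound $|E^T X \beta /n|$ and $|E^T g /n|$ separately, and combine via a union bound. Since the errors are i.i.d.\ standard Gaussians and independent of $(X,Z)$ by Condition \ref{Gaussian.condition}, I replace the Rademacher symmetrization/contraction machinery used in Lemma \ref{T1.lemma} by a direct application of Dudley's entropy bound for Gaussian processes together with Borell's concentration inequality for Gaussian suprema.

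For the linear part, H\"older's inequality yields
\bes
|E^T X \beta /n | \le \|E^T X / n\|_\infty \cdot \|\beta\|_1 \le \frac{R^2}{\lambda \sqrt{\delta_0/2}} \|E^T X /n\|_\infty,
\ees
where the second inequality uses $\tau(f) \le R$. Each coordinate $E^T X_j /n$ is centered Gaussian with variance at most $\KX^2/n$ by Condition \ref{design.condition}, so a union bound plus a Gaussian tail estimate gives $\|E^T X/n\|_\infty \le \KX(\lambda_0 + \sqrt 2 \deltaDUE \mu)$ with probability at least $1 - \exp[-n \deltaDUE^2 \mu^2]$. Plugging in the hypothesis $\deltaDUE \lambda \ge \lambda_0 R/\mu$ together with $\mu \le R$ reduces this contribution to $(\text{const})\cdot \deltaDUE R^2$, which is absorbed into $\delta_0 R^2/2$ once $\deltaDUE$ is chosen small enough.

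For the nonparametric part I condition on $(X,Z)$; then $g \mapsto E^T g / \sqrt n$ is a centered Gaussian process with intrinsic pseudo-metric $\|\cdot\|_n$. By the rescaling argument from the proof of Lemma \ref{empiricalG.lemma}, the $\|\cdot\|_\infty$-entropy (and hence the $\|\cdot\|_n$-entropy) of $\{g : X \beta + g \in \mathcal{F}(R)\}$ is at most $A(R/(u\mu))^{1/m}$, and $\sup_g \|g\|_n \le \|g\|_\infty \le \KG R/\mu$ by Condition \ref{penalty.condition}. Dudley's inequality, carried out exactly as in Lemma \ref{empiricalG.lemma}, then gives
\bes
\EE\bigl[\sup_g |E^T g / n| \bigm| X, Z\bigr] \le \frac{\CC \sqrt A}{\sqrt n} \cdot \frac{2m}{2m-1}\, \KG^{(2m-1)/(2m)} \cdot \frac{R}{\mu},
\ees
which, under $\mu^2 \ge \kappaUno n^{-2m/(2m+1)}$ and $\mu \le R$, fits below $\delta_0 R^2 /4$ up to constants. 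Borell's inequality applied to this Gaussian supremum, with variance parameter $\sup_g \|g\|_n^2 / n \le \KG^2 R^2 / (n \mu^2)$, then provides concentration around the expectation with failure probability $\exp[-n \deltaDUE^2 \mu^2]$ for a deviation of order $\deltaDUE R^2$.

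Taking the union bound over the two failure events produces the stated probability $1 - 2 \exp[-n \deltaDUE^2 \mu^2]$, and the combined upper bound lies below $\delta_0 R^2$ provided $\deltaDUE$ is chosen small enough. The main obstacle is the bookkeeping of constants: the Gaussian maximum bound for the linear piece, the Dudley integral, and the Borell concentration radius must all fit simultaneously inside the $\delta_0 R^2$ budget under the prescribed lower bound $\mu^2 \ge (\kappaUno \vee \kappaUnoP)\, n^{-2m/(2m+1)}$, exactly in the spirit of the accounting done in \eqref{TuttiDelta.equation}. This step is however considerably cleaner than the corresponding estimate in Lemma \ref{T1.lemma}, since no symmetrization and contraction layers are needed for the Gaussian case.
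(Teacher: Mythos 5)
Your architecture---split $f=X\beta+g$, bound the linear and nonparametric contributions to $E^Tf/n$ separately, use a Gaussian maximal inequality for the first and Dudley plus Gaussian concentration for the second---is the same as the paper's, which likewise replaces the Rademacher sequence of Lemma \ref{T1.lemma} by the Gaussian errors. The gap is quantitative, and it sits in every place where you substitute the deterministic sup-norm radius for the empirical $L_2$ radius. In the Dudley integral you take the upper limit to be $\sup_g\|g\|_n\le \KG R/\mu$; since the integrand scales like $u^{-1/(2m)}$, this yields $\EE\sup_g|E^Tg/n|$ of order $R/(\sqrt n\,\mu)$ rather than the paper's $\kappaUno R/(\sqrt n\,\mu^{1/(2m)})$. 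The latter is at most $\deltaDUE R\mu\le \deltaDUE R^2$ precisely under the stated hypothesis $\mu^2\ge \kappa\, n^{-2m/(2m+1)}$; the former fits under $\delta_0R^2$ only if $\mu^2 \ge c\, n^{-1/2}/R$, which fails in the regime the result is designed for, $R\asymp\mu\asymp n^{-m/(2m+1)}$ (note $2m/(2m+1)>1/2$ and $R\to0$ is permitted, indeed forced by $R^2\le\lambda$ together with the upper bound on $\mu^2$ in \eqref{condition_mu_Thm.equation}). The same defect recurs in your Borell step, where the variance parameter $\KG^2R^2/(n\mu^2)$ produces a deviation $\sqrt{2t}\,\KG R/(\mu\sqrt n)=\sqrt2\,\deltaDUE\KG R$, and in the linear part, where the $\sqrt2\deltaDUE\mu$ piece of your maximal bound contributes $(R^2/\lambda)\KX\sqrt{\delta_0}\,\deltaDUE\mu\le \KX\sqrt{\delta_0}\,\deltaDUE R$: all of these are of order $\deltaDUE R$, not $\deltaDUE R^2$, so they cannot be absorbed into the budget $\delta_0R^2$ with a constant $\deltaDUE$ depending only on $\KX,\KG,A,m$.

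The missing idea is the one the paper's proof uses: carry out Dudley and the concentration inequality with the \emph{random} radii $R_n=\sup_g\|g\|_n$ and $\tilde R_n=\sup_{f\in\mathcal{F}(R)}\|f\|_n$, restrict to the event $\{\tilde R_n\le 2R\}$ so that both the entropy integral and the Gaussian variance parameter are of order $R$ (giving expectation and deviation terms of order $\deltaDUE R\mu\le\deltaDUE R^2$), and then control
$$
\PP(\tilde R_n>2R)\le \PP\Bigl(\sup_{f\in\mathcal{F}(R)}\bigl|\,\|f\|_n^2-\|f\|^2\,\bigr|\ge\delta_0R^2\Bigr)\le \exp[-n\deltaDUE^2\mu^2]
$$
via Lemma \ref{T1.lemma}. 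This is also where the factor $2$ in the probability $1-2\exp[-n\deltaDUE^2\mu^2]$ actually comes from: one exponential for the Gaussian concentration and one for the event controlling $\tilde R_n$, not a union bound over the two pieces of $f$.
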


{\bf Proof.} 
For $X$ fixed we have 
$$E^TX_j\sim\mathcal{N}(0,||X_j||_2^2)$$
and consequently
\be \label{Zwischenstep_T2.equation}
\EE\max_{1\leq j\leq p} E^TX_j\leq \max_{1\leq j\leq p} \left( \sum_{i=1}^n X_{ij}^2\right)
^{\frac{1}{2}} \sqrt{2\log p}\leq \KX \sqrt{2\log p}\sqrt{n}
\ee
We continue as in the proof of Lemma \ref{T1.lemma}, but now with the Rademacher sequence replaced by the
Gaussian errors $\{ e_i \}$, and conditionally on $X$.
Then we obtain
\bes
\EE\left (  \sup_{\beta:\ X \beta + g \in {\cal F} (R) } \biggl | {1 \over n} \sum_{i=1}^n (X\beta)_i
e_i \biggr | \biggr \vert X 
\right ) \le {R^2  \over \sqrt {2/ \delta_0} \lambda}  
\EE \biggl ( \| E^T X \|_{\infty} \vert X \biggr ) /n
\ees
\bes
 \le  { \lambda_0  \KX \over \sqrt{2/\delta_0}  \lambda} R^2 \le{\deltaDUE \KX \over\sqrt {2/\delta_0 }}{R\mu} 
\le
 { \deltaDUE \KX \over  \sqrt {2/\delta_0 } } {R^2},
\ees
where we used \eqref{Zwischenstep_T2.equation}, the definition of $\lambda_0$ and Assumption
\eqref{Assumption_lambda0.equation}. Moreover, conditionally on $Z$
$$
\EE \left ( \sup_{g:\ X\beta + g \in {\cal F} (R)   } \biggl | { 1 \over n}  \sum_{i=1}^n g_i e_i \biggr
| \biggr \vert Z \right ) =\EE \left ( \sup_{g:\ X\beta + g \in {\cal F} (R)   } \biggl | { 1 \over n} 
\sum_{i=1}^n g_i e_i \epsilon_i \biggr | \biggr \vert Z \right ) 
$$
Notice that $\|g e\|= \|g\|\cdot \|e\|=\|g\|\leq \Lambda R$, where
$\Lambda := 1 + \Lambda_{h, {\rm max} } / \Lambda_{\tilde X , {\rm min}}$.
Then the above inequality can be further 
approximated,
with help of Lemma \ref{empiricalG.lemma} as follows:
$$
\leq \kappaUno { R_n^{1- {1 \over 2m} }   \over \mu^{2m+1 \over 2m} \sqrt n   } R^{1 \over 2m} \mu .
$$
Recall that $R_n = \sup_{g:\ X\beta + g \in {\cal F} (R)   } \| g \|_n$.
For $\mu \ge \deltaDUE^{-{2 m \over 2m+1}} n^{-{m \over 2m+1} } $, we obtain
$$
\EE \left( \sup_{f \in {\cal F} (R)} \frac{|E^T f |}{n}\right) \leq \EE \left( \sup_{g:\ X\beta + g \in {\cal F}
(R)} \frac{|E^T g |}{n} \right)  +\EE \left ( \sup_{\beta:\ X\beta + g \in {\cal F} (R)} \frac{|E^T X\beta 
|}{n}\right)
$$
$$
\leq \kappaUno { R_n^{1- {1 \over 2m} }   \over \mu^{2m+1 \over 2m} \sqrt n   } R^{1
\over 2m} \mu + \deltaDUE \KX \sqrt{\delta_0/2} R^2
$$
Assuming $R_n\leq 2R$ we would have
$$
\EE \left( \sup_{f \in {\cal F} (R)} \frac{|E^T f |}{n}\right) \leq 2\kappaUno R^2\deltaDUE + \deltaDUE \KX
\sqrt{\delta_0/2} R^2 .
$$
Define now the event $\mathbb{A}$ as follows:
$$
\mathbb{A}:=\left\lbrace \sup_{f \in {\cal F} (R)} | \epsilon^T f /n | \ge  {  \deltaDUE \KX \over  \sqrt
{2/\delta_0 } } {
R^2  } +  2 \kappaUno \deltaDUE R^2 + 2 R \sqrt {\frac{8t}{n}} \right\rbrace 
$$
and 
$$
\tilde{R}_n= \sup_{f\in \mathcal{F}(R)} ||f||_n
$$
Then, by concentration of measure for Gaussian random variables (see \cite{massart1896concentration}), for all
$t>0$,
$$
\PP (\mathbb{A})=\EE\left(\PP \left( \mathbb{A} \Big|Z\right)  \right)=
\EE\left(\PP \left( \mathbb{A} \wedge \tilde{R}_n\leq 2R\Big|Z\right)  \right) + \EE\left(\PP \left( \mathbb{A}
\wedge \tilde{R}_n> 2R\Big|Z\right) \right)
$$
$$
\leq \EE\left(\PP \left( \mathbb{A} \wedge \tilde{R}_n\leq 2R\Big|Z\right)  \right)+ \EE\left(\PP
\left(\tilde{R}_n> 2R\Big|Z\right) \right) \leq e^{-t}+\PP \left(\tilde{R}_n>2R\right)
$$
$$
\leq e^{-t}+\PP \left(\tilde{R}_n^2>4R^2\right) \leq e^{-t}+\PP \left(\tilde{R}_n^2- \sup_{f\in
\mathcal{F}(R)}||f||^2 \geq 3R^2 \right) 
$$
$$
\leq e^{-t}+\PP \left( \sup_{f \in \mathcal{F}(R)} \Big|
||f||_n^2-||f||^2\Big|\geq 3R^2 \right) 
$$
$$
\leq e^{-t} +\PP \left(\sup_{f \in \mathcal{F}(R) }\Big|
||f||_n^2-||f||^2\Big|\geq \delta_0 R^2 \right)
$$
$$
\leq  e^{-t} +e^{-\deltaDUE^2 \mu^2 n}
$$
where in the last step we used Lemma \ref{T1.lemma}.
Choose now:
$$
t:=\deltaDUE^2 \mu^2 n
$$
small enough such that
$$
{\deltaDUE \KX \over  \sqrt {2/\delta_0 } } { R^2  } +  2 \kappaUno \deltaDUE R^2 + 2 R \sqrt {\frac{8t}{n}} 
\leq
\delta_0 R^2 .
$$
For example:
\be \label{deltaDUE}
\deltaDUE:=\delta_0\left( \KX  \sqrt {\delta_0 /2} + 2 \kappaUno + 4 \sqrt{\delta_0}\right)^{-1} ~ .
\ee
Then we have
$$
\PP\left( {\cal T}_2( \delta_0 , R ) \right) \geq 1-2 e^{-n\deltaDUE^2 \mu^2}.
$$
\hfill $\sqcup \mkern -12mu \sqcap$

\begin{proof} \textit{of Theorem \ref{1.theorem}}\\
We first assume that we are on $\mathcal{T}$, then as direct consequence of Lemma \ref{Teo_T.lemma}, all the 
conclusions of Theorem \ref{1.theorem} holds. It is now enough to show that the probability of $\mathcal{T}$ is 
at least as large as required for satisfying the theorem's requests.\\
Along the lines of \eqref{deltaUnoSecondaDef}, \eqref{deltaDUE}, and 
\eqref{kappaUno.equation}, \eqref{kappaUnoP.equation}, we can now define
\be \label{deltaTOT.equation}
\deltaTOT:=\min\{ \deltaUno,\deltaDUE\}
\ee
and
\be \label{kappaTOT.equation}
\kappaTOT:=\max\{\kappaUno,\kappaUnoP\}
\ee
Then we have
$$
\PP [\mathcal{T}]\geq 1-\PP[\mathcal{T}_1^c]-\PP[\mathcal{T}_2^c]\geq 1-\exp\biggl [-n (\deltaUnoS)^2 \mu^2   
\biggr ]-2\exp\biggl [-n (\deltaDUE)^2 \mu^2 \biggr ]
$$
$$
\leq 1-3 \exp \biggl [-n (\deltaTOT)^2 \mu^2 \biggr ] .
$$
\end{proof}

\begin{proof} \textit{of Theorem \ref{2.theorem}}\\
The idea of the proof is that $(\hat\beta,\hat g)$ is obtained by minimising the penalised sum of squares
so the ``derivative" has to be $0$ in $(\hat\beta,\hat g)$. This is the so called 
Karash-Kuhn-Tucker- (KKT-) condition. Similar 
ideas as in the proof of Theorem \ref{1.theorem} are used in Lemmas 
\ref{helpLemma1.lemma}-\ref{gX_unif_bound.lemma}, which provide useful approximations for finishing the 
proof.
\bigskip \\
Define
$$
\hat \beta_s^j:= \hat\beta+s {\rm e}_j,~~\hat g_s^j := \hat g -s h^j,
$$
where ${\rm e}_j$ is the $j-$th unit vector of the standard basis of $\R^p$. Let then $\hat \tau$ satisfy
$\|\hat \tau \|_\infty\leq 1$ and $\hat \tau^T \hat \beta=\|\hat\beta\|_1$. The KKT-condition is given by
$$
\frac{d}{ds} \left( \|Y-X\hat\beta^j_s-\hat g^j_s\|_n^2 +\lambda \|\hat\beta^j_s\|_1+\mu^2 J^2(\hat
g^j_s)\right)\bigg|_{s=0}=0,
$$
where the ``derivative" is to be understood in sense of subdifferential calculus. Differentiating and using the 
matrix notation we get
$$
-2(Y-X\hat\beta-\hat g)^T \tilde X/n+\lambda \hat \tau-2 \mu^2 J(\hat g,h)=0,
$$
Writing $Y=X\beta^0 +g^0+E$ we have
$$
2\Big(X(\hat\beta-\beta^0)+(\hat g-g^0)-E \Big)^T \tilde X/n+\lambda \hat \tau-2 \mu^2 J(\hat g,h)=0
$$
Hence
$$
2(\hat\beta-\beta^0)^T\tilde X^T \tilde X/n+\lambda \hat \tau + 2(\hat\beta-\beta^0)^T h^T \tilde X/n
$$
$$+2(\hat g-g^0)^T \tilde X/n+2E^T\tilde X/n -2 \mu^2 J(\hat g,h)=0.
$$
Multiplying by $\hat\beta-\beta^0$ we obtain
$$
2(\hat\beta-\beta^0)^T\tilde X^T \tilde X (\hat\beta-\beta^0)/n +\lambda \|\hat\beta\|
-\hat\tau^T\beta^0+2(\hat\beta-\beta^0)^T h^T \tilde X(\hat\beta-\beta^0)/n
$$ $$
+2(\hat g-g^0)^T \tilde X(\hat\beta-\beta^0)/n +2E^T\tilde X(\hat\beta-\beta^0)/n-2 \mu^2 J(\hat
g,h)(\hat\beta-\beta^0)=0 .
$$
Note that $\hat\tau \beta^0\leq \|\beta^0\|_1$ and hence $\hat\tau^T\beta^0-\|\hat\beta_{S_0}\|_1\leq
\|(\hat\beta-\beta^0)_{S_0}\|_1$. Therefore, we have
$$
2(\hat\beta-\beta^0)^T\tilde X^T \tilde X (\hat\beta-\beta^0)/n+\lambda \|\hat\beta_{S_0^c}\|_1
$$
$$
\leq \lambda \|(\hat\beta-\beta^0)_{S_0}\|_1 + 2|(\hat\beta-\beta^0)^T h^T \tilde X(\hat\beta-\beta^0)/n| 
$$ $$
+2|(\hat g-g^0)^T \tilde X(\hat\beta-\beta^0)/n|+2|E^T\tilde X(\hat\beta-\beta^0)/n| + 2|\mu^2 J(\hat
g,h)(\hat\beta-\beta^0)| .
$$
We approximate separately each term of the right hand side of the inequality. We use Lemmas
\ref{helpLemma1.lemma}-\ref{gX_unif_bound.lemma} and similar arguments as in the proof of Lemmas
\ref{T1.lemma}-\ref{T2.lemma} for the approximation:
\bes
&\leq& \lambda \|(\hat\beta-\beta^0)_{S_0}\|_1+ 2\biggl( 4 \kappaUno \deltaUno   + \KX  \sqrt {2} 
\deltaTre^2 + { 
8 \KX \deltaTre^2   \over 3 }\biggr ) R\mu \| \hat \beta - \beta^0 \|_1  \\
&&+\left( 7  \sqrt {2\delta_0} \deltaTOT R^2  + 2\sqrt {2 \delta_0} R^2 +4\sqrt{2}\KX 
\sqrt{\frac{2\log(2p)}{n}}\right) \| \hat \beta - \beta^0 \|_1 .
\ees
Recall that $R\mu \le R^2\leq \lambda$. Choose now $\delta_0, \deltaUno $ and $\deltaTre$ small enough that
$$
2\biggl( 4 \kappaUno \deltaUno   + \KX  \sqrt {2} \deltaTre  + { 8 \KX \deltaTre^2   \over 3 }\biggr ) R\mu  
+7  \sqrt {2\delta_0} \deltaTOT R^2 +
$$
$$
+ 2\sqrt {2 \delta_0} R^2 +4\sqrt{2}\KX \sqrt{\frac{2\log(2p)}{n}} \leq \frac{\lambda}{2}
$$
Hence
\be \label{Step_Thm2_Proof.equation}
2\|\tilde X (\hat\beta-\beta^0)\|_n^2 +\frac {\lambda}{2} \|\hat\beta_{S_0^c}\|_1\leq
\frac{3 \lambda}{2} \|(\hat\beta-\beta^0)_{S_0}\|_1 .
\ee
Adding $\lambda \|(\hat\beta-\beta^0)_{S_0}\|_1/2$ on both sides of
\eqref{Step_Thm2_Proof.equation} we obtain:
$$
2\|\tilde X (\hat\beta-\beta^0)\|_n^2 +\frac{\lambda}{2}
\|\hat\beta-\beta^0\|_1\leq 2\lambda \|(\hat\beta-\beta^0)_{S_0}\|_1
$$
$$ \le 2 \lambda \sqrt {s_0} \| \hat \beta - \beta_0 \|_2 $$
$$ \le 2 \lambda \sqrt {s_0} \| \tilde X ( \hat \beta - \beta_0 ) \|_2 / \Lambda_{\tilde X , {\rm min} }  $$
$$ \le {  \lambda^2 s_0  \over \Lambda_{\tilde X, {\rm min}}^2 } + \| \tilde X ( \hat \beta - \beta_0) \|^2 $$
$$ \le {  \lambda^2 s_0  \over \Lambda_{\tilde X, {\rm min}}^2 } + 
\| \tilde X ( \hat \beta - \beta_0) \|_n^2 + \lambda \| \hat \beta - \beta^0 \|_1 / 4  ,$$
where we apply the third part of Lemma \ref{gX_unif_bound.lemma},
with $20  \sqrt {\delta_0/2} \deltaTOT R^2  \le \lambda / 4 $. 
We find
$$ \|\tilde X (\hat\beta-\beta^0)\|_n^2 +\frac{\lambda}{4} \| \hat \beta - \beta_0 \|_1 \le 
{  \lambda^2 s_0  \over \Lambda_{\tilde X, {\rm min}}^2 } . $$

This with probability at least $1-3/(2p)- \exp[-n ( \deltaTre \mu^2) ] - \PP ({\cal T}^c ) $.
Lemmas \ref{T1.lemma} and \ref{T2.lemma} conclude the
proof.
\end{proof}
 
\begin{lemma}\label{helpLemma1.lemma}
Assume Conditions \ref{Gaussian.condition} and \ref{design.condition}. Then  
$$
E^T\tilde X(\hat\beta-\beta^0)/n\leq \sqrt{8} \KX \sqrt { 2 \log (2p) \over n} \|\hat\beta-\beta^0\|_1
$$
with probability at least $1-1/p$
\end{lemma}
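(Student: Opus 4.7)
My plan is to reduce the bilinear form to a max-norm bound via Hölder's inequality, then control the maximum of $p$ Gaussian coordinates by a tail bound plus union bound. Specifically, Hölder gives
$$\bigl| E^T \tilde X (\hat \beta - \beta^0)/n \bigr| \le \|E^T \tilde X/n\|_\infty \cdot \|\hat\beta - \beta^0\|_1,$$
so it suffices to show $\|E^T \tilde X/n\|_\infty \le \sqrt{8}\, \KX \sqrt{2\log(2p)/n}$ with probability at least $1 - 1/p$.

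Next I would identify the conditional law of each coordinate. By Condition \ref{Gaussian.condition}, $E$ is independent of $(X,Z)$ and has i.i.d.\ standard normal entries, so conditionally on $(X,Z)$ the scalar $E^T \tilde X_j$ is Gaussian with mean zero and variance $\|\tilde X_j\|_2^2$. The key deterministic ingredient is a uniform bound on $\|\tilde X_j\|_2^2$: Condition \ref{design.condition} combined with the conditional Jensen inequality yields $|h_j(z)| = |\EE[X_j \mid z]| \le \KX$, and therefore $|\tilde X_{ij}| \le |X_{ij}| + |h_j(z_i)| \le 2\KX$ almost surely. Consequently $\|\tilde X_j\|_2^2 \le 4 \KX^2 n$ a.s., uniformly over $j$.

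Third, I would apply the standard sub-Gaussian tail bound coordinate-wise and take a union bound over $j=1,\ldots,p$:
$$\PP\Bigl( \max_{1\le j\le p} |E^T \tilde X_j|/n \ge t \Bigr) \le 2 p \exp\bigl( -t^2 n / (8 \KX^2) \bigr).$$
Setting $t := \sqrt{8}\,\KX \sqrt{2\log(2p)/n}$ gives $t^2 n/(8\KX^2) = 2\log(2p)$, which exceeds $\log(2p^2)$, so the right-hand side is at most $1/p$. Combining this with the first display completes the proof.

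I do not anticipate a substantial obstacle; the argument is a textbook concentration calculation. The only mildly delicate point is recognising that the conditional variance $\|\tilde X_j\|_2^2$ admits a deterministic bound, which in turn relies on inheriting the $\KX$-bound from $X_j$ to $h_j = \EE[X_j \mid Z]$ through conditional Jensen. Everything else is Gaussian tail plus union bound and a short numerical comparison $\log 2 + 2\log p \le 2\log(2p)$.
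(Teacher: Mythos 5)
Your proof is correct and follows essentially the same route as the paper's: H\"older's inequality to reduce to $\| E^T \tilde X /n \|_\infty$, the conditional Gaussianity of $E^T \tilde X_j$ given the design, the deterministic bound $\| \tilde X_j \|_\infty \le 2 \KX$ (the paper states it directly; you justify it via conditional Jensen), and a union bound with the same numerical slack $\log 2 + 2\log p \le 2\log(2p)$. The only cosmetic difference is that the paper parametrises the tail by $t$ and sets $t=\log(2p)$ at the end, whereas you plug the threshold in directly.
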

\begin{proof} We have
$$
E^T\tilde X(\hat\beta-\beta^0)/n\leq \| E^T\tilde 
X/n\|_\infty \|\hat\beta-\beta^0\|_1.
$$
For $\tilde X$ fixed, $E^T \tilde X $ is Gaussian. 
So for all $t >0$ and all $j$ 
$$\PP ( | E^T\tilde 
X_j/n|  \ge \sqrt {\frac{2 t}{n}} \| X_j \|_n  ) \le 2 \exp[-t] . $$
Hence
$$\PP\biggl  ( \| E^T\tilde 
X/n\|_{\infty}   \ge \sqrt {\frac{2 (t+ \log (p ))}{n} } \max_{1 \le j \le p }  \| X_j \|_n \biggr  ) \le 2 
\exp[-t] . $$

Moreover, $\max_{1 \le j \le p} \| \tilde X_j \|_n \le \| \tilde X \|_{\infty} \le 2 \KX$. 
Now take $t = \log (2p)$.
\end{proof}

\begin{lemma}
Assume Conditions \ref{eigenvalue.condition}, \ref{Jh.condition}, \eqref{condition_mu_Thm.equation} and 
\eqref{Assumption_R2Lambda2}. Then on $\mathcal{T}$
$$
\mu^2 J(\hat g,h) (\hat\beta-\beta^0)\leq \sqrt{2 \delta_0} R^2 \|\hat \beta -\beta^0\|_1
$$
\end{lemma}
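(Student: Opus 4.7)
The plan is to reduce the bound on $\mu^2 J(\hat g,h)(\hat\beta-\beta^0)$ to two ingredients: Cauchy--Schwarz for the semi-inner product $J(\cdot,\cdot)$, together with the $\tau$-bound from Theorem \ref{1.theorem}, which controls $J(\hat g)$ on the event $\mathcal{T}$.

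First, interpreting $J(\hat g,h)$ componentwise as a row vector in $\R^p$ with entries $J(\hat g,h_j)$, I would separate off the $\ell_1$ norm of $\hat\beta-\beta^0$ via
$$|\mu^2 J(\hat g,h)(\hat\beta-\beta^0)| \le \mu^2 \Bigl(\max_{1 \le j \le p} |J(\hat g, h_j)|\Bigr) \|\hat\beta - \beta^0\|_1,$$
and then apply the Cauchy--Schwarz inequality for the semi-scalar product $J(\cdot,\cdot)$ componentwise, together with Condition \ref{Jh.condition}, to obtain
$$\max_{1\le j \le p} |J(\hat g,h_j)| \le J(\hat g)\, J(h) \le J(\hat g)\, \Jh.$$
(Here I am reading $\Jh$ as a uniform bound on the component seminorms of $h$, which is the natural interpretation consistent with its use in \eqref{condition_mu_Thm.equation}.)

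Second, I would control $J(\hat g)$ using Theorem \ref{1.theorem}. Since we are on $\mathcal{T}$, Lemma \ref{Teo_T.lemma} gives $\tau(\hat f - f^0) \le R$, and by the definition of $\tau$ this implies
$$\mu^2 J^2(\hat g - g^0) \le R^2, \quad\text{so}\quad J(\hat g - g^0) \le R/\mu.$$
The triangle inequality then yields $J(\hat g) \le R/\mu + J(g^0)$, hence
$$\mu^2 J(\hat g)\, \Jh \le \mu R\, \Jh + \mu^2 J(g^0)\, \Jh.$$

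Finally, it remains to absorb both terms into $\sqrt{2\delta_0}\, R^2$ using the upper bound \eqref{condition_mu_Thm.equation}, i.e.\ $\mu \le R\sqrt{\delta_0/2}/(1+J(g^0)+\Jh)$. The first term satisfies
$$\mu R\, \Jh \le R^2 \sqrt{\delta_0/2}\, \frac{\Jh}{1+J(g^0)+\Jh} \le R^2 \sqrt{\delta_0/2},$$
while the second is at most $\mu^2 J(g^0)\Jh \le \delta_0 R^2/2$ by squaring the same inequality and dropping the denominator. Summing and using $\sqrt{\delta_0/2} + \delta_0/2 \le \sqrt{2\delta_0}$ (valid for $\delta_0 \le 2$, in particular for the value $\delta_0 = 1/253$ used in the paper) closes the estimate. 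The only mild obstacle is this last constant-chasing step; the substance of the argument is really the combination of Cauchy--Schwarz with the $J$-control coming from $\tau(\hat f - f^0) \le R$ on $\mathcal{T}$.
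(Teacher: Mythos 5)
Your proposal is correct and follows essentially the same route as the paper: H\"older's inequality to extract $\|\hat\beta-\beta^0\|_1$, Cauchy--Schwarz for the semi-scalar product $J$ together with Condition \ref{Jh.condition}, the bound $J(\hat g)\le R/\mu + J(g^0)$ from $\tau(\hat f-f^0)\le R$ on $\mathcal{T}$ via the triangle inequality, and the upper bound on $\mu$ in \eqref{condition_mu_Thm.equation} to absorb the constants. Your final constant-chasing is slightly looser (the paper in fact obtains $\sqrt{\delta_0/2}\,R^2$), but it still yields the stated bound $\sqrt{2\delta_0}\,R^2$.
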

\begin{proof}
Similarly to the previous lemma we have
$$
\mu^2 J(\hat g,h) (\hat\beta-\beta^0)\leq \max_{j\in 1,...,p} \mu^2 J(\hat g,h_j) \|\hat\beta-\beta^0\|_1
$$
Hence, by triangle inequality and Lemma \ref{Teo_T.lemma}
$$
\max_{j\in 1,...,p}|J(\hat g,h_j)|\leq \Jh J(\hat g)\leq \frac{R}{\mu} \Jh+\Jh J(g_0)
$$
and
$$
\max_{j\in 1,...,p} \mu^2 |J(\hat g,h_j)| \leq \sqrt{ \delta_0/2} R^2
$$
\end{proof}

\begin{lemma} \label{gX_unif_bound.lemma}
Assume the conditions of Theorem \ref{1.theorem}. Then for all $\deltaTre >0$, with $\deltaTre\mu\geq 
\sqrt{\log 
p/n}$ except for a set of probability at
most $\exp[-n ( \deltaTre^2 \mu )^2] $, one has
$$  \biggl | {1 \over n} \sum_{i=1}^n g_i \tilde x_i \beta  \biggr | \le 
\biggl ( 4 \kappaUno \deltaUno   + \KX  \sqrt {2} \deltaTre^2  + { 8 \KX \deltaTre   \over 3 }\biggr ) 
R \mu \| \beta \|_1 $$
uniformly in $(\beta , g )  \in {\cal F} (R) $.
Furthermore, except for a set with probability at most $1/ (2p)$, 
and
$$  \biggl | {1 \over n} \sum_{i=1}^n (h \beta )_i (\tilde X\beta)_i  \biggr | \le 7  \sqrt {\delta_0/2} 
\deltaTOT R^2 \| \beta \|_1  . $$
uniformly in $(\beta , g)  \in {\cal F} (R) $.\\
Finally, also up to a set with probability at most $1/ (2p)$, 
$$ \biggl |  \| \tilde X \beta \|_n^2 - \| \tilde X \beta \|^2 \biggr | \le 20  \sqrt {\delta_0/2} \deltaTOT 
R^2  \| \beta \|_1  . $$
uniformly in $(\beta , g)  \in {\cal F} (R) $.

\end{lemma}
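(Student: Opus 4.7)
\textbf{Plan for Lemma~\ref{gX_unif_bound.lemma}.} All three bounds share the same skeleton: first use Hölder's inequality to factor $\|\beta\|_1$ out of the bilinear form, reducing to the $\ell_\infty$-norm of a vector of empirical processes, then control that $\ell_\infty$-norm uniformly in the remaining parameter. The shared bookkeeping trick is the budget $\|\beta\|_1\le R^2\sqrt{\delta_0/2}/\lambda$ (implied by $\tau(f)\le R$) combined with the assumption $\lambda\ge\sqrt{2\log(2p)/n}/\deltaTOT$: a factor $\sqrt{\log p/n}$ then converts into $\sqrt{\delta_0/2}\deltaTOT$ once one copy of $\|\beta\|_1$ is absorbed into $R^2$.

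For part~1, I would write $(1/n)\sum_i g_i(\tilde X\beta)_i = \sum_j \beta_j\cdot (1/n)\sum_i g_i\tilde X_{ij}$, so that the quantity is at most $\|\beta\|_1\cdot \max_j|(1/n)\sum_i g_i \tilde X_{ij}|$. I would then bound the expectation of the inner supremum uniformly over $g$ with $(\beta,g)\in\mathcal{F}(R)$ by repeating the calculation already carried out inside the proof of Lemma~\ref{T1.lemma}: symmetrize with a Rademacher sequence, invoke Dudley's inequality on the class $\{gX_j:g,j\}$ whose combined entropy is $\log p + A(R\KX/(u\mu))^{1/m}$, split the square-root entropy integral via $\sqrt{a+b}\le\sqrt a+\sqrt b$, and use $\mu^2\ge\kappaUno n^{-2m/(2m+1)}$ to turn the polynomial piece into a term of size $R\mu\,\kappaUno \deltaUno$ and the $\log p$ piece into a term of size $R\mu\,\KX\deltaTre$ (using $\sqrt{\log p/n}\le\deltaTre\mu$). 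Finally, apply Massart's Bernstein-type inequality with envelope $\lesssim \KG\KX R/\mu$ and variance $\lesssim R^2\mu^2$; choosing $t=n\deltaTre^4\mu^2$ yields the failure probability $\exp[-n(\deltaTre^2\mu)^2]$ and produces the third term $\tfrac{8}{3}\KX\deltaTre$ from the Bernstein remainder.

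For parts~2 and~3, the class is finite (indexed by $p$ or $p^2$ coordinates) so no entropy integral is needed. In part~2, expand
\[
(1/n)\sum_i (h\beta)_i(\tilde X\beta)_i=\sum_{j,k}\beta_j\beta_k\cdot (1/n)\sum_i h_j(z_i)\tilde X_{ik},
\]
and bound by $\|\beta\|_1^2\cdot \max_{j,k}|(1/n)\sum_i h_j(z_i)\tilde X_{ik}|$, observing that the entries are centered because $\EE[\tilde X_k\mid Z]=0$. By Condition~\ref{design.condition}, $|h_j(z)|=|\EE[X_j|z]|\le\KX$ and $|\tilde X_{ik}|\le 2\KX$, so each summand is bounded by $2\KX^2$; Hoeffding and a union bound over the $p^2$ pairs produce an event of probability at least $1-1/(2p)$ on which the maximum is at most $C\KX^2\sqrt{\log(2p)/n}$. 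Combining with $\|\beta\|_1\le R^2\sqrt{\delta_0/2}/\lambda$ and $\sqrt{\log(2p)/n}\le\deltaTOT\lambda/\sqrt{2}$ converts one copy of $\|\beta\|_1$ and the square root into $\sqrt{\delta_0/2}\deltaTOT R^2$, giving the bound in the form $C'\sqrt{\delta_0/2}\deltaTOT R^2\|\beta\|_1$; one checks $C'\le 7$. Part~3 is identical with $(1/n)\sum_i\tilde X_{ij}\tilde X_{ik}-\EE[\tilde X_j\tilde X_k]$ in place of $(1/n)\sum_i h_j(z_i)\tilde X_{ik}$, with the larger constant $20$ absorbing the loss from the envelope $(2\KX)^2$ rather than $\KX\cdot 2\KX$.

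The main obstacle is the uniform bound in part~1: carrying the Dudley integral through with the mixed $\log p$-plus-polynomial entropy, and then choosing the envelope and variance proxies inside Massart's inequality so that the three error contributions line up with the stated three-term constant $4\kappaUno\deltaUno+\KX\sqrt{2}\deltaTre^2+\tfrac{8}{3}\KX\deltaTre$ and the correct scaling $R\mu\|\beta\|_1$ (not $R^2\|\beta\|_1$). Parts~2 and~3 are comparatively routine bookkeeping once the event of probability $1/(2p)$ and the Hoeffding envelope have been fixed.
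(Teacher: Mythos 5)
Your proposal matches the paper's proof in all essentials: Hölder to peel off $\|\beta\|_1$, a symmetrization/Dudley/Massart argument for the $g$-dependent term with $t=n(\deltaTre\mu)^2$-type choices, and Lemma 14.14-plus-Massart (equivalently Hoeffding plus a union bound over the $p^2$ coordinate pairs) for the two finite-index parts, followed by converting one copy of $\|\beta\|_1\le\sqrt{\delta_0/2}\,R^2/\lambda$ and $\sqrt{\log(2p)/n}\le\deltaTOT\lambda$ into the stated constants. The only cosmetic difference is that in part~1 you fold the union over $j$ into the entropy $\log p + A(R\KX/(u\mu))^{1/m}$ of the combined class before applying Dudley, whereas the paper concentrates for each fixed $j$ and then shifts $t\mapsto t+\log p$; the two are interchangeable here.
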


\begin{proof} As in Lemma \ref{empiricalG.lemma} (applying a symmetrisation step), since by assumption $\mu^2 
\geq 
\deltaUno^{-{ 4m \over 2m+1}} n^{-{42m \over 2m+1}} $, we have for all $j$, 
$$\EE \sup_{g: \ X \beta + g \in {\cal F} (R) } 
\biggl | {1 \over n} \sum_{i=1}^n g_i \tilde X_{ij} \biggr | \le 4 \kappaUno \deltaUno \KX R\mu .$$ 
By Massart's inequality, for all $t > 0$, and all $j$
$$\PP \biggl ( \sup_{ g:\ \ X \beta + g \in {\cal F} (R) } \biggl | {1 \over n} \sum_{i=1}^n g_i  \tilde X_{ij} 
\biggr | 
\ge 8 \kappaUno \deltaUno R \mu + \KX R \sqrt {2t / n} + { 4 \KX R  t \over 3\mu n } \biggr ) \le \exp[-t] . $$
It follows that for all $t >0$ 
$$\PP \biggl ( \sup_{ g, j, \ X \beta + g \in {\cal F} (R) } \biggl | {1 \over n} \sum_{i=1}^n g_i \tilde 
X_{ij} \biggr | $$ $$
\ge 8 \kappaUno \deltaUno R \mu + \KX R \sqrt {2(t + \log p )\over  n} + { 4 \KX R ( t + \log p ) \over 3\mu n 
} 
\biggr ) \le \exp[-t] . $$
Choose $t = n ( \deltaTre \mu )^2 $ and use that $\deltaTre \mu \ge \sqrt {\log p / n } $ to get
$$\PP \biggl ( \sup_{g,  j, \ X \beta + g \in {\cal F} (R) } \biggl | {1 \over n} \sum_{i=1}^n g_i \tilde 
X_{ij} \biggr | 
\ge 8 \kappaUno \deltaUno R \mu + \KX R \sqrt {2} \deltaTre \mu + { 8 \KX R\mu \deltaTre   \over 3 } \biggr ) 
$$ 
$$\le \exp[-n (\deltaTre \mu )^2 ] . $$
Now apply 
$$ \biggl | {1 \over n} \sum_{i=1}^n g_i \tilde x_i \beta  \biggr | 
\le  \max_{1 \le j \le p }  \biggl | {1 \over n} \sum_{i=1}^n g_i \tilde X_{ij} \biggr | \| \beta \|_1 . $$

For the second result, we use
$$\EE \max_{j,k} \biggl | {1 \over n} \sum_{i=1}^n h_{ik} \tilde X_{ij} \biggr | \le 
\sqrt 2 {4 \log (2p) \over n} \KX^2 , $$
by Lemma 14.14 in \cite{BvdG2011}. Using Massart's inequality (Theorem 14.2 in \cite{BvdG2011}),
we get for all $t > 0$, 
$$\PP \biggl ( \max_{j,k} \biggl | {1 \over n} \sum_{i=1}^n h_{ik} \tilde X_{ij} \biggr | \ge
2 \sqrt {4 \log (2p) \over n} \KX^2 + 2 \KX^2 \sqrt {8 t \over n }  \biggr ) \le \exp [-t] .$$
Choosing $t = \log (2p)$ gives
$$\PP \biggl ( \max_{j,k} \biggl | {1 \over n} \sum_{i=1}^n h_{ik} \tilde X_{ij} \biggr | \ge
7 \sqrt {2 \log (2p) \over n} \KX^2   \biggr ) \le {1 \over 2p}  .$$
Now apply that for $\beta$ such that $X\beta + g \in {\cal F} (R) $,
$$ \biggl | {1 \over n} \sum_{i=1}^n (h \beta )_i \tilde X_{ij} \biggr | \le \sqrt {\delta_0/2} {R^2 \over 
\lambda} \max_{1\leq k\leq p} 
\biggl | {1 \over n} \sum_{i=1}^n h_{ik} \tilde X_{ij} \biggr | , $$
and the bound 
$$\lambda \ge { \sqrt {2 \log (2p )/n } \over \deltaTOT } . $$

For the third result, we use the same arguments as for the second one. By Lemma 14.14 in \cite{BvdG2011}
$$\EE \max_{j,k}  \biggl | {1 \over n} \sum_{i=1}^n ( \tilde X_{ik} \tilde X_{ij} - \EE\tilde X_{ik} \tilde 
X_{ij}) \biggr | \le
4 \sqrt {4 \log (2p) / n } \KX^2 , $$
and by Massart's inequality for all $t$,
$$ \PP (  \max_{j,k}  \biggl | {1 \over n} \sum_{i=1}^n ( \tilde X_{ik} \tilde X_{ij} - \EE\tilde X_{ik} \tilde 
X_{ij}) \biggr |
\ge 4 \sqrt {4 \log (2p) / n } \KX^2  + 4 \KX^2 \sqrt {8t } \biggr ) \le \exp [-t] . $$
So for $t=\log (2p)$,
$$\PP (  \max_{j,k}  \biggl | {1 \over n} \sum_{i=1}^n ( \tilde X_{ik} \tilde X_{ij} - \EE\tilde X_{ik} \tilde 
X_{ij}) \biggr |
\ge 20 \sqrt {4 \log (2p) / n } \KX^2  \biggr ) \le 1/(2p)  . $$
Then again apply H\"older's inequality and $\| \beta \|_1 \le \sqrt {\delta_0/2} R^2/\lambda$ whenever
$(\beta , g ) \in {\cal F}(R)$.

\end{proof}

\clearpage

\section{Tables with results} \label{NumRes.Appendix}
Here the results of the simulations are resumed in tables \ref{ResultsLe1.table}-\ref{ResultsPr2.table}.
\begin{table} \footnotesize \renewcommand{\tabcolsep}{3.5pt}\centering 
\begin{tabular}{||r|l || c|c|c|c || c|c|c|c || c|c ||}
\hline\hline
\multicolumn{12}{||c||}{Results for the set Leukemia}\\
\hline\hline
$n^\circ$&setting & \multicolumn{4}{c||}{Estimation error}&\multicolumn{4}{c||}{Prediction
error}&\multicolumn{2}{c||}{$\|\hat g-g^0\|_n$}\\
\hline
$N$&$p$,$s_0$,STN,$g_j$&DPI&DPd&LK&LN &DPi&DPd&LK&LN &DPi&DPd\\
\hline \hline
1&250,5,2,$g_1$&0.87&0.91&0.71&0.71&4.81&5.83&3.95&3.95&0.60&0.95 \\ \hline
2&250,15,2,$g_1$&1.84&1.84&1.71&1.71&18.45&18.35&16.90&16.92&1.34&1.56 \\ \hline
3&1000,5,2,$g_1$&0.97&0.99&0.83&0.83&6.56&7.16&5.28&5.32&0.64&1.03 \\ \hline
4&1000,15,2,$g_1$&1.97&1.98&1.95&1.96&20.67&20.54&19.49&49.47&1.41&1.65 \\ \hline
5&250,5,8,$g_1$&0.21&0.23&0.17&0.17&1.13&1.72&0.94&0.95&0.15&0.32 \\ \hline
6&250,15,8,$g_1$&0.52&0.54&0.46&0.46&11.74&12.67&8.14&8.24&0.65&0.98 \\ \hline
7&1000,5,8,$g_1$&0.23&0.26&0.2&0.2&1.66&2.98&1.36&1.38&0.17&0.53 \\ \hline
8&1000,15,8,$g_1$&0.77&0.78&0.85&0.85&18.92&18.78&17.2&17.25&1.02&1.32 \\ \hline
9&250,5,32,$g_1$&0.06&0.07&0.05&0.05&0.23&0.40&0.19&0.19&0.04&0.10 \\ \hline
10&250,15,32,$g_1$&0.24&0.24&0.17&0.17&8.54&9.85&3.61&3.72&0.47&0.80 \\ \hline
11&1000,5,32,$g_1$&0.07&0.08&0.06&0.06&0.38&1.18&0.30&0.30&0.05&0.26 \\ \hline
12&1000,15,32,$g_1$&0.48&0.53&0.60&0.57&18.66&18.57&16.60&16.72&0.96&1.29 \\ \hline
13&250,5,2,$g_2$&0.86&0.89&0.71&11.74&4.69&5.50&3.89&6.06&0.59&0.96 \\ \hline
14&250,15,2,$g_2$&1.85&1.84&1.71&11.84&18.34&18.45&16.82&17.86&1.37&1.58 \\ \hline
15&1000,5,2,$g_2$&0.96&0.97&0.83&11.74&6.16&6.65&5.22&7.06&0.67&1.04 \\ \hline
16&1000,15,2,$g_2$&1.99&1.96&1.96&11.89&20.38&20.50&19.32&19.73&1.43&1.66 \\ \hline
17&250,5,8,$g_2$&0.21&0.22&0.17&11.72&0.98&1.52&0.96&5.28&0.15&0.32 \\ \hline
18&250,15,8,$g_2$&0.54&0.55&0.46&11.72&11.90&12.82&8.28&15.79&0.68&1.00 \\ \hline
19&1000,5,8,$g_2$&0.23&0.27&0.20&11.71&1.52&2.85&1.40&6.37&0.18&0.56 \\ \hline
20&1000,15,8,$g_2$&0.84&0.81&0.83&11.77&18.55&18.68&17.22&18.96&1.04&1.33 \\ \hline
21&250,5,32,$g_2$&0.17&0.17&0.05&11.72&0.54&0.84&0.19&5.21&0.09&0.25 \\ \hline
22&250,15,32,$g_3$&0.28&0.29&0.16&11.72&8.95&10.24&3.51&15.42&0.51&0.86 \\ \hline
23&1000,5,32,$g_2$&0.17&0.18&0.06&11.71&0.72&1.70&0.29&6.28&0.10&0.40 \\ \hline
24&1000,15,32,$g_2$&0.59&0.62&0.60&11.76&18.41&118.35&16.44&18.76&1.03&1.34 \\ \hline
25&250,5,2,$g_3$&0.87&0.90&0.71&3.97&4.79&5.62&3.94&7.05&0.60&0.94 \\ \hline
26&250,15,2,$g_3$&1.84&1.82&1.72&4.24&18.64&18.48&16.87&18.72&1.34&1.56 \\ \hline
27&1000,5,2,$g_3$&0.97&0.99&0.83&3.99&6.44&6.95&5.27&7.76&0.66&1.02 \\ \hline
28&1000,15,2,$g_3$&1.98&1.94&1.95&4.36&20.67&20.61&19.40&20.07&1.41&1.62 \\ \hline
29&250,5,8,$g_3$&0.21&0.23&0.17&3.89&1.12&1.68&0.94&6.56&0.15&0.32 \\ \hline
30&250,15,8,$g_3$&0.52&0.54&0.45&3.91&11.66&12.91&8.18&17.51&0.65&1.00 \\ \hline
31&1000,5,8,$g_3$&0.23&0.26&0.20&3.89&1.68&2.95&1.38&7.52&0.18&0.53 \\ \hline
32&1000,15,8,$g_3$&0.76&0.80&0.83&4.02&18.88&18.79&17.27&19.76&1.02&1.34 \\ \hline
33&250,5,32,$g_3$&0.08&0.08&0.05&3.88&0.26&0.48&0.19&6.62&0.05&0.13 \\ \hline
34&250,15,32,$g_3$&0.24&0.25&0.17&3.89&8.52&10.05&3.57&17.36&0.47&0.81 \\ \hline
35&1000,5,32,$g_3$&0.08&0.10&0.06&3.89&0.43&1.32&0.29&7.53&0.06&0.29 \\ \hline
36&1000,15,32,$g_3$&0.53&0.53&0.58&4.02&18.39&18.64&16.55&19.67&0.98&1.27 \\ \hline
\hline
\end{tabular}
\caption{The results of the pseudo real data study with the dataset Leukemia are presented here. For 
the 36 different designs ($N=1,\ldots,36$) the performance of Lasso with (LK) and without (LN) prior 
knowledge on the nuisance function are compared with our estimator in both the independent (DPi) and dependent 
(DPd) case. Prediction and estimation error for $\hat\beta$ and $\hat g$ are given. The results are based 
on 1000 replicates for each design.}
\label{ResultsLe1.table}
\end{table}

\begin{table} \footnotesize \centering 
\begin{tabular}{||r|l || c|c|c|c || c|c|c|c || }
\hline\hline
\multicolumn{10}{||c||}{Results for the set Leukemia}\\
\hline\hline
$n^\circ$&setting &\multicolumn{4}{c||}{TPR}&\multicolumn{4}{c||}{FPR}\\
\hline
$N$&$p$,$s_0$,STN,$g_i$&DPi&DPd&LK&LN &DPi&DPd&LK&LN\\
\hline \hline
1&250,5,2,$g_1$&97.9&84.0&98.8&98.8&11.4&12.0&9.8&9.7 \\ \hline
2&250,15,2,$g_1$&56.7&51.9&65.1&64.6&13.2&12.6&12.8&12.7 \\ \hline
3&1000,5,2,$g_1$&88.8&68.8&93.9&93.5&4.0&3.9&3.3&3.3 \\ \hline
4&1000,15,2,$g_1$&30.3&27.7&35.4&34.9&3.7&3.6&3.4&3.3 \\ \hline
5&250,5,8,$g_1$&99.9&99.3&100&100&10.4&13.3&9.5&9.3 \\ \hline
6&250,15,8,$g_1$&90.5&86.0&97.5&97.5&17.4&17.4&18.5&18.5 \\ \hline
7&1000,5,8,$g_1$&100&95.1&100&100&3.8&4.6&3.4&3.4 \\ \hline
8&1000,15,8,$g_1$&50.7&47.5&61.8&61.4&4.5&4.5&4.5&4.5 \\ \hline
9&250,5,32,$g_1$&100&100&100&100&4.0&6.5&4.8&4.7 \\ \hline
10&250,15,32,$g_1$&92.6&89.9&98.8&98.7&15.1&16.4&13.7&13.7 \\ \hline
11&1000,5,32,$g_1$&99.9&97.6&100&100&1.8&3.0&1.9&1.9 \\ \hline
12&1000,15,32,$g_1$&54.0&49.7&65.6&65.6&4.7&4.6&4.7&4.8 \\ \hline
13&250,5,2,$g_2$&98.0&85.0&99.2&83.0&11.1&11.2&9.8&8.7 \\ \hline
14&250,15,2,$g_2$&55.2&51.8&65.2&53.7&13.0&12.9&12.6&11.2 \\ \hline
15&1000,5,2,$g_2$&88.4&68.4&94.1&65.4&3.6&3.4&3.3&2.7 \\ \hline
16&1000,15,2,$g_2$&29.8&27.8&36.1&29.1&3.5&3.6&3.3&3.0 \\ \hline
17&250,5,8,$g_2$&100&99.3&100&91&8.2&10.4&9.8&9.5 \\ \hline
18&250,15,8,$g_2$&89.7&85.1&97.1&74.2&16.8&16.9&18.5&14.3 \\ \hline
19&1000,5,8,$g_2$&99.9&94.4&100&77.6&2.9&3.5&3.5&3.0 \\ \hline
20&1000,15,8,$g_2$&49.3&47.0&62.5&43.2&4.2&4.3&4.6&3.8 \\ \hline
21&250,5,32,$g_2$&100&99.8&100&91.1&1.4&3.3&4.7&9.4 \\ \hline
22&250,15,32,$g_3$&93.2&89.5&98.9&76.4&14.4&15.4&13.6&14.5 \\ \hline
23&1000,5,32,$g_2$&99.9&96.6&100&78.8&0.8&2.0&1.9&3.0 \\ \hline
24&1000,15,32,$g_2$&51.8&48.5&66.4&44.6&4.3&4.3&4.7&3.8 \\ \hline
25&250,5,2,$g_3$&97.9&85.3&99.2&68.7&11.5&11.6&9.9&8.0 \\ \hline
26&250,15,2,$g_3$&57.8&53.2&65.6&46.6&13.6&13.1&12.7&10.5 \\ \hline
27&1000,5,2,$g_3$&89.3&67.9&93.8&47.7&3.9&3.7&3.3&2.4 \\ \hline
28&1000,15,2,$g_3$&30.8&28.7&35.7&24.1&3.7&3.7&3.4&2.8 \\ \hline
29&250,5,8,$g_3$&99.9&99.2&100&76.2&0.5&13.0&9.5&8.5 \\ \hline
30&250,15,8,$g_3$&90.6&85.4&97.7&59.8&17.6&17.8&18.7&12.3 \\ \hline
31&1000,5,8,$g_3$&100&95.1&99.9&56.0&3.7&4.4&3.4&2.6 \\ \hline
32&1000,15,8,$g_3$&51.1&47.2&62.5&32.2&4.6&4.5&4.6&3.3 \\ \hline
33&250,5,32,$g_3$&100&99.9&100&75.9&2.7&5.3&4.7&8.6 \\ \hline
34&250,15,32,$g_3$&93.1&89.6&98.8&60.8&15.2&16.4&13.7&12.5 \\ \hline
35&1000,5,32,$g_3$&99.9&97.5&100&57.5&1.4&2.8&1.9&2.6 \\ \hline
36&1000,15,32,$g_3$&54.0&49.8&66.1&33.4&4.7&4.6&4.8&3.3 \\ \hline
\hline
\end{tabular}
\caption{For the pseudo real data from dataset Leukemia performance of Lasso with (LK) and without (LN) prior 
knowledge of the nuisance function are compared with our estimator for both independent (DPi) and dependent 
(DPd) settings. True and false positive rates (TPR and FPR respectively) are given in this table. The results 
are based on 1000 replicates for each design.}
\label{ResultsLe2.table}
\end{table}

\begin{table} \footnotesize \renewcommand{\tabcolsep}{3.5pt} \centering 
\begin{tabular}{||r|l || c|c|c|c || c|c|c|c || c|c ||llll}
\hline\hline
\multicolumn{12}{||c||}{Results for the set Prostate}\\
\hline\hline
$n^\circ$&setting & \multicolumn{4}{c||}{Estimation error}&\multicolumn{4}{c||}{Prediction
error}&\multicolumn{2}{c||}{$\|\hat g-g^0\|_n$}\\
\hline
$N$&$p$,$s_0$,STN,$g_j$&DPi&DPd&LK&LN &DPi&DPd&LK&LN &DPi&DPd\\
\hline \hline
1&250,5,2,$g_1$&0.70&0.74&0.59&0.60&4.66&5.41&4.16&4.25&0.46&0.82 \\ \hline
2&250,15,2,$g_1$&1.47&1.51&1.36&1.36&16.82&17.13&16.01&16.01&0.92&1.25 \\ \hline
3&1000,5,2,$g_1$&0.79&0.82&0.69&0.70&6.11&6.53&5.37&5.41&0.49&0.96 \\ \hline
4&1000,15,2,$g_1$&1.61&1.64&1.53&1.53&19.72&20.06&18.93&18.93&0.98&1.33 \\ \hline
5&250,5,8,$g_1$&0.18&0.19&0.15&0.15&1.23&1.78&1.07&1.09&0.12&0.31 \\ \hline
6&250,15,8,$g_1$&0.42&0.44&0.39&0.39&8.25&9.16&6.99&7.09&0.31&0.64 \\ \hline
7&1000,5,8,$g_1$&0.21&0.23&0.18&0.18&20.7&3.15&1.75&1.78&0.14&0.50 \\ \hline
8&1000,15,8,$g_1$&0.54&0.55&0.51&0.51&15.59&15.86&13.91&14.02&0.49&0.93 \\ \hline
9&250,5,32,$g_1$&0.06&0.06&0.05&0.05&0.33&0.57&0.28&0.29&0.03&0.15 \\ \hline
10&250,15,32,$g_1$&0.21&0.21&0.20&0.20&4.70&5.46&3.47&3.54&0.16&0.46 \\ \hline
11&1000,5,32,$g_1$&0.06&0.08&0.06&0.06&0.61&1.25&0.46&0.47&0.04&0.26 \\ \hline
12&1000,15,32,$g_1$&0.29&0.30&0.28&0.28&13.73&14.43&11.27&11.42&0.40&0.84 \\ \hline
13&250,5,2,$g_2$&0.71&0.74&0.59&11.74&4.65&5.37&4.20&6.16&0.47&0.84 \\ \hline
14&250,15,2,$g_2$&1.49&1.50&1.35&11.80&16.90&17.15&15.88&17.19&0.93&1.25 \\ \hline
15&1000,5,2,$g_2$&0.80&0.81&0.70&11.75&6.06&6.58&5.48&7.07&0.49&0.93 \\ \hline
16&1000,15,2,$g_2$&1.62&1.63&1.53&11.82&19.71&19.90&18.94&19.60&0.99&1.33 \\ \hline
17&250,5,8,$g_2$&0.18&0.20&0.15&11.73&1.19&1.81&1.07&5.51&0.12&0.32 \\ \hline
18&250,15,8,$g_2$&0.42&0.43&0.38&11.72&8.27&9.33&6.92&14.92&0.31&0.66 \\ \hline
19&1000,5,8,$g_2$&0.21&0.23&0.18&11.72&1.97&3.09&1.73&6.51&0.14&0.50 \\ \hline
20&1000,15,8,$g_2$&0.54&0.56&0.51&11.73&15.62&15.96&13.92&18.39&0.50&0.95 \\ \hline
21&250,5,32,$g_2$&0.15&0.15&0.05&11.72&0.69&1.11&0.27&5.38&0.07&0.29 \\ \hline
22&250,15,32,$g_3$&0.22&0.23&0.20&11.71&5.1&5.93&3.52&14.58&0.18&0.49 \\ \hline
23&1000,5,32,$g_2$&0.15&0.16&0.06&11.72&1.07&1.89&0.44&6.48&0.08&0.41 \\ \hline
24&1000,15,32,$g_2$&0.30&0.31&0.28&11.72&13.86&14.55&11.23&18.22&0.41&0.84 \\ \hline
25&250,5,2,$g_3$&0.71&0.74&0.59&3.99&4.72&5.47&4.20&6.69&0.47&0.85 \\ \hline
26&250,15,2,$g_3$&1.49&1.49&1.36&4.13&17.08&17.06&15.83&18.05&0.93&1.25 \\ \hline
27&1000,5,2,$g_3$&0.81&0.81&0.70&3.97&6.06&6.54&5.37&7.81&0.50&0.94 \\ \hline
28&1000,15,2,$g_3$&1.63&1.62&1.53&4.2&19.86&19.93&19.00&20.05&1.00&1.33 \\ \hline
29&250,5,8,$g_3$&0.18&0.19&0.15&3.94&1.24&1.86&1.09&6.38&0.12&0.32 \\ \hline
30&250,15,8,$g_3$&0.42&0.44&0.39&3.91&8.22&9.31&6.98&16.78&0.31&0.66 \\ \hline
31&1000,5,8,$g_3$&0.21&0.23&0.18&3.91&2.09&3.11&1.75&7.46&0.14&0.49 \\ \hline
32&1000,15,8,$g_3$&0.53&0.54&0.49&3.96&15.36&15.98&13.82&19.37&0.49&0.91 \\ \hline
33&250,5,32,$g_3$&0.08&0.09&0.05&3.94&0.42&0.74&0.28&6.31&0.04&0.19 \\ \hline
34&250,15,32,$g_3$&0.20&0.21&0.19&3.89&4.68&5.56&3.39&16.64&0.16&0.48 \\ \hline
35&1000,5,32,$g_3$&0.09&0.10&0.06&3.91&0.74&1.45&0.48&7.47&0.05&0.30 \\ \hline
36&1000,15,32,$g_3$&0.28&0.30&0.27&3.92&14.05&14.59&11.38&19.41&0.41&0.87 \\ \hline
\hline
\end{tabular}
\caption{The results of the pseudo real data study with the dataset Prostate are presented here. For 
the 36 different designs ($N=1,\ldots,36$) the performance of Lasso with (LK) and without (LN) prior 
knowledge on the nuisance function are compared with our estimator in both the independent (DPi) and dependent 
(DPd) case. Prediction and estimation error for $\hat\beta$ and $\hat g$ are given. The results are based 
on 1000 replicates for each design.}
\label{ResultsPr1.table}
\end{table}

\begin{table} \footnotesize \centering 
\begin{tabular}{||r|l || c|c|c|c || c|c|c|c || }
\hline\hline
\multicolumn{10}{||c||}{Results for the set Prostate}\\
\hline\hline
$n^\circ$&setting &\multicolumn{4}{c||}{TPR}&\multicolumn{4}{c||}{FPR}\\
\hline
$N$&$p$,$s_0$,STN,$g_i$&DPi&DPd&LK&LN &DPi&DPd&LK&LN\\
\hline \hline
1&250,5,2,$g_1$&89.8&79.3&93.0&92.7&10.8&11.2&10.3&10.4 \\ \hline
2&250,15,2,$g_1$&61.8&56.9&67.2&66.1&13.6&13.3&14.0&13.8 \\ \hline
3&1000,5,2,$g_1$&76.1&61.0&82.3&81.2&3.7&3.5&3.5&3.4 \\ \hline
4&1000,15,2,$g_1$&39.0&34.0&43.4&42.7&4.1&4.1&4.1&4.0 \\ \hline
5&250,5,8,$g_1$&99.9&98.3&99.9&99.9&11.3&13.8&10.5&10.6 \\ \hline
6&250,15,8,$g_1$&94.2&91.3&96.5&96.5&18.9&19.4&19.1&19.1 \\ \hline
7&1000,5,8,$g_1$&98.4&91.9&99.2&99.2&4.4&5.2&4.2&4.2 \\ \hline
8&1000,15,8,$g_1$&68.5&64.5&76.9&76.3&6.1&6.1&6.5&6.4 \\ \hline
9&250,5,32,$g_1$&100&99.1&100&100&4.9&7.1&4.5&4.5 \\ \hline
10&250,15,32,$g_1$&97.8&96.4&99.0&99.0&13.7&15.1&12.3&12.4 \\ \hline
11&1000,5,32,$g_1$&99.5&97.1&99.8&99.8&2.4&3.5&2.2&2.2 \\ \hline
12&1000,15,32,$g_1$&75.5&70.9&83.9&83.3&6.2&6.3&6.4&6.4 \\ \hline
13&250,5,2,$g_2$&89.9&77.7&92.5&73.5&10.7&11.1&10.5&8.7 \\ \hline
14&250,15,2,$g_2$&61.6&56.3&66.6&55.1&13.7&13.4&13.8&11.8 \\ \hline
15&1000,5,2,$g_2$&77.3&60.5&82.8&58.1&3.7&3.5&3.6&2.8 \\ \hline
16&1000,15,2,$g_2$&38.4&35.1&43.6&34.2&4.1&4.1&4.1&3.5 \\ \hline
17&250,5,8,$g_2$&99.9&98.2&99.9&80.8&10.8&13.4&10.6&9.5 \\ \hline
18&250,15,8,$g_2$&94.3&91.3&96.9&72.8&19.4&20.0&19.2&15.4 \\ \hline
19&1000,5,8,$g_2$&98.5&92.7&99.3&67.6&3.9&4.8&4.2&3.1 \\ \hline
20&1000,15,8,$g_2$&69.0&64.0&76.7&50.1&6.2&6.1&6.5&4.6 \\ \hline
21&250,5,32,$g_2$&99.9&98.7&100&81.7&2.7&4.9&4.5&9.4 \\ \hline
22&250,15,32,$g_3$&97.7&96.2&98.9&74.5&13.8&15.3&12.4&15.5 \\ \hline
23&1000,5,32,$g_2$&99.5&95.6&99.9&68.9&1.5&2.6&2.2&3.2 \\ \hline
24&1000,15,32,$g_2$&75.6&71.2&83.8&52.3&6.3&6.4&6.4&4.8 \\ \hline
25&250,5,2,$g_3$&90.5&78.3&93.2&62.4&11.0&11.1&10.5&7.6 \\ \hline
26&250,15,2,$g_3$&61.3&56.2&66.3&47.7&13.9&13.1&13.6&10.8 \\ \hline
27&1000,5,2,$g_3$&76.8&60.8&82.6&45.9&3.6&3.6&3.5&2.5 \\ \hline
28&1000,15,2,$g_3$&38.3&34.0&43.6&27.8&4.1&4.1&4.1&3.2 \\ \hline
29&250,5,8,$g_3$&99.8&98.3&99.9&67.1&11.4&14.3&10.9&8.1 \\ \hline
30&250,15,8,$g_3$&94.1&91.3&96.5&59.8&19.2&19.7&19.1&12.9 \\ \hline
31&1000,5,8,$g_3$&98.1&92.9&99.1&52.2&4.4&5.2&4.1&2.7 \\ \hline
32&1000,15,8,$g_3$&70.2&64.6&77.6&37.7&6.2&6.2&6.6&3.8 \\ \hline
33&250,5,32,$g_3$&99.9&99.0&100&68.2&3.6&6.2&4.5&8.0 \\ \hline
34&250,15,32,$g_3$&97.8&96.4&99.1&61.2&14.0&15.3&12.4&13.2 \\ \hline
35&1000,5,32,$g_3$&99.5&96.8&99.8&52.6&1.9&3.2&2.2&2.7 \\ \hline
36&1000,15,32,$g_3$&75.6&70.6&83.9&39.1&6.4&6.4&6.4&3.9 \\ \hline
\hline
\end{tabular}
\caption{For the pseudo real data from dataset Prostate performance of Lasso with (LK) and without (LN) prior 
knowledge of the nuisance function are compared with our estimator for both independent (DPi) and dependent 
(DPd) settings. True and false positive rates (TPR and FPR respectively) are given in this table. The results 
are based on 1000 replicates for each design.}
\label{ResultsPr2.table}
\end{table}

\clearpage

\bibliographystyle{apalike} 
\bibliography{reference}

\vspace{0.8cm}
\noindent Patric M\"uller, Seminar f\"ur Statistik, Department of Mathematics, ETH Zurich, CH-8092 Zurich, 
Switzerland.\\
E-mail: muellepa@stat.math.ethz.ch

\end{document}